\date{}
\newcommand{\V}[1]{\mbox{\boldmath $ #1 $}}
\newcommand{\bey}{\begin{eqnarray}}
\newcommand{\eey}{\end{eqnarray}}
\newcommand{\beq}{\begin{equation}}
\newcommand{\eeq}{\end{equation}}
\theoremstyle{plain}
\newtheorem{thm}{\hspace{6mm}Theorem}[section]
\newtheorem{lem}{\hspace{6mm}Lemma}[section]
\theoremstyle{definition}
\theoremstyle{remark}
\newtheorem{rem}{\hspace{6mm}Remark}[section]
\numberwithin{equation}{section}
\numberwithin{figure}{section}
\def\bD{\mathbb{D}}
\def\bM{\mathbb{M}}
\def\Th{\mathcal{T}_h}
\newcommand{\cT}{{\mathcal T}}
\title{A study on anisotropic mesh adaptation for finite element approximation of eigenvalue problems
with anisotropic diffusion operators}
\author{Jingyue Wang
\thanks{Department of Mathematics, the University of Kansas, Lawrence, KS 66045, U.S.A. 
({\em jwang@math.ku.edu}).}
\and Weizhang~Huang
\thanks{Department of Mathematics, the University of Kansas, Lawrence, KS 66045, U.S.A. 
({\em whuang@ku.edu}).}
}
\begin{document}
\vskip 1cm
\maketitle

\begin{abstract}
Anisotropic mesh adaptation is studied for the linear finite element solution of eigenvalue problems
with anisotropic diffusion operators. The $\bM$-uniform mesh approach is employed with which any nonuniform mesh
is characterized mathematically as a uniform one in the metric specified by a metric tensor.
Bounds on the error in the computed eigenvalues
are established for quasi-$\bM$-uniform meshes. Numerical examples arising from
the Laplace-Beltrami operator on parameterized surfaces, nonlinear diffusion,
thermal diffusion in a magnetic field in plasma physics, and the Laplacian operator on an L-shape domain
are presented. Numerical results show that anisotropic adaptive meshes can lead to more accurate computed
eigenvalues than uniform or isotropic adaptive meshes. They also confirm the second order convergence of the error
that is predicted by the theoretical analysis.

The effects of approximation of curved boundaries on the computation of eigenvalue problems
is also studied in two dimensions. It is shown that the initial mesh used to define the geometry of the physical domain
should contain at least $\sqrt{N}$ boundary points to keep
the effects of boundary approximation at the level of the error of the finite element approximation,
where $N$ is the number of the elements in the final adaptive mesh.
Only about $N^{1/3}$ boundary points in the initial mesh 
are needed for boundary value problems. This implies that the computation of
eigenvalue problems is more sensitive to the boundary approximation than that of boundary value problems.
\end{abstract}

\noindent{\bf AMS 2010 Mathematics Subject Classification.}
65N15, 65N25, 65N30, 65N50

\noindent{\bf Key Words.}
anisotropic mesh, mesh adaptation, anisotropic diffusion, eigenvalue problem, finite element, error analysis

\noindent{\bf Abbreviated title.}
Anisotropic mesh adaptation for eigenvalue problems

\section{Introduction}

We are concerned with the adaptive linear finite element approximation for the eigenvalue problem
of a general anisotropic diffusion operator, 
\begin{equation}
\begin{cases}
-\nabla\cdot (\bD \nabla u) = \lambda \, \rho\,  u, & \quad\text{ in }\Omega\\
u = 0, & \quad\text{ on } \partial \Omega
\end{cases}
\label{eigen-1}
\end{equation}
where $\Omega \subset \mathbb{R}^d$ ($d \ge 1$) is a polygonal or polyhedral domain,
$\bD = \bD(\V{x})$ is the diffusion matrix assumed to be symmetric and uniformly positive definite on $\Omega$,
and $\rho(\V{x})$ is a given weight function satisfying
\[
0 < \rho_0 \le \rho(x) \le \rho_1 < + \infty, \quad \int_\Omega \rho(\V{x}) d \V{x} = 1 .
\]
Problem (\ref{eigen-1}) is isotropic when the diffusion matrix has the form $\bD = \alpha(\V{x}) I$
for some scalar function $\alpha(\V{x})$ and anisotropic otherwise.

Anisotropic eigenvalue problems in the form of (\ref{eigen-1}) arise in the application of
the Laplace-Beltrami operator to geometric shape analysis \cite{Reuter-09,Reuter-06}
where the eigenvalues and eigenfunctions
are used to extract shape-DNA, a numerical fingerprint or signature,
of any two or three dimensional surface or solid and provide insights
in the structure and morphology of the shape.
They can also arise from the stability or sensitivity analysis and the construction
of special solutions (such as traveling wave and standing wave solutions)
for anisotropic diffusion partial differential equations (PDEs), which occur in many areas of science and engineering
such as plasma physics \cite{GL09,SH07},
petroleum engineering \cite{ABBM98a,ABBM98b,MD06},
and image processing \cite{PM90, Wei98}.

Numerical solution of eigenvalue problems for elliptic operators, both adaptive and non-adaptive, 
has been studied extensively in the past; e.g., see
Birkhoff et al. \cite{BdBSW1966},
Fix \cite{Fix1973},
Babu{\v{s}}ka and Osborn \cite{BO1989,BO1991},
Xu and Zhou \cite{XZ2001},
Dai et al. \cite{DXZ2008},
Boffi et al. \cite{Bof2010, Boffi2010b},
Mehrmann and Miedlar \cite{MM2011},
and references therein.
However, most of the existing work is concerned with isotropic diffusion problems or
isotropic mesh adaptation. Much less is known about numerical computation
of anisotropic eigenvalue problems and anisotropic mesh adaptation for general eigenvalue problems.
In principle, algorithms and theory developed for general eigenvalue problems
can be applied to anisotropic eigenvalue problems. However, they can be ineffective/inefficient
since they do not take into consideration the interplay between the diffusion directions,
the mesh geometry, and the geometry of eigenfunctions.
Such an interplay has proven important for proper
mesh adaptation and for conditioning of algebraic systems for the finite
element approximation of boundary value problems (BVPs) and initial-boundary value problems
(e.g., see \cite{Hua05b,HuKaLa2013,KaHu2013b,KaHuXu2012}).
Moreover, a standard finite element or finite difference discretization of anisotropic eigenvalue problems
can result in a discrete eigenvalue problem exhibiting totally different structures \cite{Huang2013},
a phenomenon that rarely occurs with problems of isotropic diffusion.


The objective of this paper is to study anisotropic mesh adaptation for use in the linear
finite element approximation of anisotropic eigenvalue problems in the form (\ref{eigen-1}).
The starting point of the study is a classic result of Raviart and Thomas \cite{Raviart-Thomas-83}
stating that the error in the finite element approximation of the eigenvalues is bounded
by the squared energy norm of the error of projection from the corresponding eigenfunction
space into the finite element space. The anisotropic mesh adaptation is based on a bound
for the projection error. We use the so-called $\bM$-uniform mesh approach \cite{Hua06,HR11}
for anisotropic mesh adaptation. Within the approach, any nonuniform mesh is viewed as
a uniform one in the metric specified by a tensor. Using the mathematical characterizations
of the meshes, the metric tensor and the error bound for eigenvalues are obtained by minimizing
a bound on the project error over quasi-$\bM$-uniform meshes. The main theoretical result is stated in
Theorem~\ref{thm-err}. We present numerical results obtained for four numerical examples arising from
the Laplace-Beltrami operator on parameterized surfaces, nonlinear diffusion,
thermal diffusion in a magnetic field in plasma physics, and the Laplacian operator on an L-shape domain.
These numerical results show that anisotropic adaptive meshes can lead to more accurate computed
eigenvalues than uniform or isotropic adaptive meshes. They also demonstrate
the second order convergence of the error predicted by the theoretical analysis.

The effects of approximation of curved boundaries of the physical domain on the computation of eigenvalue problems
is also studied. It is shown that about $N^{1/2}$ boundary points in the initial
mesh used to define the geometry of the domain, where $N$ is the number of the elements
in the final adaptive mesh, are needed to keep
the effects of boundary approximation at the level of the error of the finite element approximation.
On the other hand, only about $N^{1/3}$ boundary points are needed in the initial mesh for BVPs.
This indicates that the computation of eigenvalue problems is more sensitive to the boundary approximation
than that of BVPs.

An outline of the paper is as follows.  In \S\ref{SEC:fem}, the linear finite element approximation of (\ref{eigen-1})
is described and several properties of the discrete and continuous problems are stated.
Anisotropic mesh adaptation for the eigenvalue problem is discussed and a bound on the error
in approximated eigenvalues is obtained in \S\ref{SEC:ama}. Numerical results are presented
in \S\ref{SEC:examples}. Effects of the approximation of curved boundaries are studied in \S\ref{SEC:boundary}.
Finally, \S\ref{SEC:conclusion} contains conclusions.

\section{Finite Element Formulation}
\label{SEC:fem}

In this section we describe the linear finite element approximation for the eigenvalue problem (\ref{eigen-1})
and state some basic properties of (\ref{eigen-1}) and the corresponding discrete eigenvalue problem.

The variational formulation of (\ref{eigen-1}) is to find $\lambda \in \mathbb{R}$ and
$0 \not \equiv u \in H_{0}^{1}(\Omega)$ such that
\begin{equation}
a(u,v) =\lambda\; b(u,v), \qquad\forall v\in H_0^1(\Omega)
\label{var-1}
\end{equation}
where
\[
a(u,v) =\int_{\Omega}(\mathbb{D}\nabla u)\cdot\nabla v d \V{x},\quad
b(u, v) = \int_{\Omega} \rho u v d \V{x} .
\]
It is known that the bilinear form $a(\cdot,\cdot)$: $H_0^1(\Omega) \times H_0^1(\Omega)\to\mathbb{R}$
is bounded, coercive, and symmetric. Moreover,
(\ref{var-1}) has a countable set of eigenvalues (e.g., see D'yakonov \cite{Dyakonov1996}),
\[
0 < \lambda_{1}\le\lambda_{2}\le\cdots \le \lambda_k \le \cdots ,
\]
where the multiplicities of the eigenvalues have been taken into account.
Furthermore, the corresponding eigenfunctions (normalized in the norm induced by $b(\cdot, \cdot)$),
\[
 u_{1},\, u_{2}, \, \cdots, \, u_{k}, \, \cdots, 
 \]
form a complete orthonormal basis for $H_0^1(\Omega)$. Finally,
the eigenvalue problem satisfies Poincar{\'e}'s min-max principle,
\[
\lambda_{k}=\min_{\text{dim}\left(E\right)=k}\max_{v\in E}\frac{a\left(v,v\right)}{b\left(v,v\right)},
\quad k = 1, 2, ...
\]
where $E$ denotes a subset of $H_0^1(\Omega)$.

To define the linear finite element approximation, we assume that an affine family of
simplicial triangulations, $\{\mathcal{T}_{h}\}$, is given for $\Omega$.
Let $V^{h}\subset H_0^1(\Omega)$ be the linear finite element space associated with $\mathcal{T}_{h}$.
Then, the linear finite element approximation of (\ref{eigen-1}) is to find
$\lambda^h \in \mathbb{R}$ and $0 \not\equiv u^{h}\in V^{h}$ such that
\begin{equation}
a(u^{h},v^{h})=\lambda^{h}\, b(u^{h},v^{h}), \quad\forall\, v^{h}\in V^{h}.
\label{fem-1}
\end{equation}
Expressing the linear finite element space into
$V^h = \text{span}\{ \phi_1, ..., \phi_{N_{v}} \}$,
where $\phi_{j}$ ($j=1, ..., N_v$) is the linear basis function associated with $j^{\text{th}}$ interior vertex
and $N_v$ is the number
of the interior vertices of $\Th$, we can rewrite (\ref{fem-1}) in a matrix form as
\beq
A \V{u} = \lambda^h\; M \V{u},
\label{fem-2}
\eeq
where $\V{u} = [u_1, ..., u_{N_v}]^T$ is the vector associated
with $u^h = \sum_j u_j \phi_j$ and the stiffness and mass matrices
$A$ and $M$ are given by
\[
a_{i,j} = a(\phi_j, \phi_i), \quad m_{i,j} = b(\phi_j, \phi_i) , \quad i, j = 1, ..., N_v .
\]

Denote by $E_k$ the linear space spanned by the first $k$ eigenfunctions, i.e.,
\[
E_k = \text{span}\{ u_1, ..., u_k\} .
\]
We have the following classic result for the finite element approximation of eigenvalue problems.

\begin{lem}[Raviart and Thomas \cite{Raviart-Thomas-83}]
\label{lem:RT83}
Assume that the eigenvalues of the finite element eigenvalue problem (\ref{fem-1})
are expressed in a non-decreasing order as
\[
0 < \lambda_{1}^h\le\lambda_{2}^h\le\cdots\le\lambda_{N_v}^h .
\]
For any given integer $k$ $(1\le k<N_v)$, 
\begin{equation}
 0 \le \frac{\lambda_j^h - \lambda_j}{\lambda_j^h} \le
 C(k)\sup_{\substack{v\in E_{k} \\ \|v\|_\rho =1} } \|v-\Pi_{h}v\|_E^{2},\qquad1\le j\le k
\label{eigenvalue-bound}
\end{equation}
where $\| \cdot \|_E = \sqrt{a(\cdot,\cdot)}$ is the energy norm, $\| \cdot \|_\rho = \sqrt{b(\cdot, \cdot)}$
is the weighted $L^2$ norm (with weight $\rho$),
$\Pi_{h}$ is the elliptic projection operator from $H_0^{1}(\Omega)$ to $V^{h}$ under the energy norm,
and $C(k)$ is a constant depending only on $\Omega$ and $k$.
\end{lem}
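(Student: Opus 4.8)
The plan is to combine Poincar\'e's min--max principle, already recorded above for both the continuous and the discrete problems, with the $a$-orthogonality of the elliptic projection $\Pi_h$. The left inequality is immediate: since $V^h\subset H_0^1(\Omega)$, every $j$-dimensional subspace of $V^h$ is admissible in the outer minimization defining $\lambda_j$, so constraining that minimization to subspaces of $V^h$ can only raise the value; hence $\lambda_j\le\lambda_j^h$ and the ratio is nonnegative. Everything then reduces to bounding $\lambda_j^h$ from above, and the real issue is to produce a bound of \emph{quadratic} order in the projection error rather than the naive linear one.

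First I would feed the trial space $F_j:=\Pi_h(E_j)\subset V^h$ into the discrete min--max principle. For $h$ small enough $\Pi_h$ is injective on $E_k$ (if $\Pi_h w=0$ with $\|w\|_\rho=1$ then $\|w\|_E=\|w-\Pi_h w\|_E$, impossible once the projection error drops below $\sqrt{\lambda_1}$), so $\dim F_j=j$ and $\lambda_j^h\le\max_{0\ne w\in E_j} a(\Pi_h w,\Pi_h w)/b(\Pi_h w,\Pi_h w)$. Fix $w=\sum_{i=1}^{j}c_iu_i$ with $b(w,w)=\sum_i c_i^2=1$, so that $a(w,w)=\sum_i\lambda_i c_i^2\le\lambda_j$ by the $b$-orthonormality and $a$-orthogonality of the eigenfunctions. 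For the numerator, Galerkin orthogonality $a(w-\Pi_h w,\Pi_h w)=0$ gives $a(\Pi_h w,\Pi_h w)=a(w,w)-\|w-\Pi_h w\|_E^2\le\lambda_j$.

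The hard part will be the denominator $b(\Pi_h w,\Pi_h w)=1-2\,b(w,w-\Pi_h w)+\|w-\Pi_h w\|_\rho^2$, which must be shown to differ from $1$ only at second order; the crude comparison $\|\cdot\|_\rho\le C\|\cdot\|_E$ alone controls the cross term merely at first order and is insufficient. The key reduction is to write, using the weak eigenrelation $b(u_i,\cdot)=\lambda_i^{-1}a(u_i,\cdot)$ and then Galerkin orthogonality to insert $\Pi_h u_i$, the identity $b(u_i,w-\Pi_h w)=\lambda_i^{-1}a(u_i-\Pi_h u_i,\,w-\Pi_h w)$, and to estimate it by Cauchy--Schwarz in the energy inner product by $\lambda_1^{-1}\|u_i-\Pi_h u_i\|_E\|w-\Pi_h w\|_E\le\lambda_1^{-1}\varepsilon^2$, where $\varepsilon:=\sup_{v\in E_k,\,\|v\|_\rho=1}\|v-\Pi_h v\|_E$ and we use that $u_i,w\in E_k$ are $\rho$-normalized. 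Summing against the $c_i$ and applying Cauchy--Schwarz in $\mathbb{R}^j$ yields $|b(w,w-\Pi_h w)|\le(\sqrt{k}/\lambda_1)\,\varepsilon^2$, so, discarding the nonnegative term $\|w-\Pi_h w\|_\rho^2$, the denominator is at least $1-(2\sqrt{k}/\lambda_1)\varepsilon^2$.

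Combining the numerator and denominator bounds gives $a(\Pi_h w,\Pi_h w)/b(\Pi_h w,\Pi_h w)\le\lambda_j/(1-(2\sqrt{k}/\lambda_1)\varepsilon^2)$ for every $\rho$-normalized $w\in E_j$, and, by scale-invariance of the Rayleigh quotient, for all $0\ne w\in E_j$; hence $\lambda_j^h\le\lambda_j/(1-(2\sqrt{k}/\lambda_1)\varepsilon^2)$. Rearranging, $(\lambda_j^h-\lambda_j)/\lambda_j^h=1-\lambda_j/\lambda_j^h\le(2\sqrt{k}/\lambda_1)\varepsilon^2$, which is exactly (\ref{eigenvalue-bound}) with $C(k)=2\sqrt{k}/\lambda_1$, a constant depending only on $k$ and on $\Omega$ through the lowest eigenvalue. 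The single delicate point throughout is the second-order estimate of the cross term, where trading the $b$-form for the $a$-form via the eigenrelation is what upgrades the rate from linear to quadratic.
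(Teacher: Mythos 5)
Your proof is correct, and the key mechanism---trading $b(u_i,\cdot)$ for $\lambda_i^{-1}a(u_i,\cdot)$ via the eigenrelation and then using Galerkin orthogonality a second time, so that the cross term $b(w,w-\Pi_h w)$ becomes quadratic rather than linear in the projection error---is exactly the right one. Note that the paper offers no internal proof to compare against: Lemma~\ref{lem:RT83} is quoted from Raviart and Thomas \cite{Raviart-Thomas-83}, and your argument is in essence a reconstruction of that classical proof: feed the trial space $\Pi_h(E_j)$ into the discrete min--max principle, bound the numerator by $\lambda_j$ through the identity $a(\Pi_h w,\Pi_h w)=a(w,w)-\|w-\Pi_h w\|_E^2$, and bound the denominator below by $1-(2\sqrt{k}/\lambda_1)\varepsilon^2$. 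The individual steps all check out: the injectivity of $\Pi_h$ on $E_k$ when $\varepsilon<\sqrt{\lambda_1}$, the $b$-orthonormality and $a$-orthogonality of the $u_i$, the two Cauchy--Schwarz applications, and the final rearrangement to the relative error.

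The only loose end is that your argument, as written, is conditional (``for $h$ small enough'', and the rearrangement needs $1-(2\sqrt{k}/\lambda_1)\varepsilon^2>0$), whereas the lemma is stated for arbitrary meshes. This is repaired in one line: since $0<\lambda_j\le\lambda_j^h$, the left-hand side of (\ref{eigenvalue-bound}) is always less than $1$, so in the case $(2\sqrt{k}/\lambda_1)\varepsilon^2\ge 1$ the bound holds trivially with your constant $C(k)=2\sqrt{k}/\lambda_1$; in the complementary case one has $\varepsilon^2<\lambda_1/(2\sqrt{k})\le\lambda_1$, so both the injectivity of $\Pi_h$ on $E_k$ and the positivity of the denominator are automatic, and your argument applies verbatim. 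With this two-case split the proof is complete and unconditional. A cosmetic remark: your $C(k)$ depends on $\Omega$ (and on $\bD$, $\rho$) through $\lambda_1$, which is consistent with how the constant is described in the lemma.
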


The above lemma shows that the error in approximating the eigenvalues $\lambda_j$, $j = 1, ..., k$
is bounded by the error of projection the corresponding eigenfunction space $E_k$ into the linear finite element
space $V^h$.  The lemma serves as the starting point for anisotropic mesh adaptation (to minimize a bound
on the projection error), which is studied in the next section.

\section{Anisotropic mesh adaptation}
\label{SEC:ama}

In this section we study anisotropic mesh adaptation for the linear finite element approximation
(\ref{fem-1}). The key is to estimate the error of projection from the eigenfunction space $E_k$
to the finite element space $V^h$, as shown in Lemma~\ref{lem:RT83}.
We use the so-called $\bM$-uniform mesh approach \cite{Hua06,HR11} for anisotropic mesh adaptation
with which any nonuniform mesh is characterized mathematically as a uniform one in the metric specified
by a metric tensor, which is in turn determined by minimizing a bound on the projection error
over quasi-$\bM$-uniform meshes.

To start with, we introduce some notation. Denote the number of the elements of mesh $\Th$ by $N$.
We assume that the reference element $\hat{K}$ is taken to be equilateral
with an unitary volume.  The coordinates on $K$ and $\hat{K}$ are denoted by $\V{x}$ and $\V{\xi}$, respectively.
For any element $K\in \Th$, the affine mapping from $\hat{K}$
to $K$ is denoted by $F_K$ and its Jacobian matrix by $F_K'$. 
Notice that $F_K'$ is a constant matrix and $\det(F_K') = |K|$, where
$|K|$ denotes the volume of $K$.
We assume for the moment that the metric tensor $\bM = \bM(\V{x})$ is given.
(A definition of $\bM$ will be given in Theorem \ref{thm-err}.)
It is known \cite{Hua06,HR11} that an $\bM$-uniform mesh
associated with $\bM$ satisfies the so-called equidistribution and alignment conditions,
\begin{align}
& |K|\det(\bM_{K})^{\frac{1}{2}} = \frac{\sigma_{h}}{N}, \quad \forall\, K \in \Th
\label{eq-1}
\\
& \frac{1}{d}\text{tr}((F_{K}')^{T}\bM_{K}F_{K}') = \det((F_{K}')^{T}\bM_{K}F_{K}')^{\frac{1}{d}}
\quad \left \{ = \left ( |K|\;\det(\bM_{K})^{\frac{1}{2}}\right )^{\frac{2}{d}} \right \} ,\quad
\forall\, K \in \Th
\label{ali-1}
\end{align}
where $\det(\cdot)$ and $\text{tr}(\cdot)$ denote the determinant and trace of a matrix, respectively,
and
\[
\sigma_{h}=\sum_{K}|K| \det(\bM_{K})^{\frac{1}{2}}, \quad
\bM_{K}=\frac{1}{|K|}\int_{K}\bM(\V{x})\, d \V{x}.
\]
The left-hand side of (\ref{eq-1}), $|K|\det(\bM_{K})^{\frac{1}{2}}$, is the volume of
$K$ in the metric $\bM_K$ and $\sigma_h$ is the sum of the volume of all elements of
$\Th$ measured in the metric specified by $\{ \bM_K,\, K \in \Th \}$, the piecewise constant
approximation of $\bM$. (Without confusion, hereafter the metric specified by this
piecewise constant approximation of $\bM$ will simply be referred to as the metric $\bM$.)
Thus, the equidistribution condition (\ref{eq-1}) implies that all of the elements have the same volume
in the metric $\bM$.
Moreover, it is not difficult to show that the left-hand side of (\ref{ali-1}), $\frac{1}{d}\text{tr}((F_{K}')^{T}\bM_{K}F_{K}')$,
is equivalent to the squared diameter of $K$ in the metric $\bM_K$, while the right-hand side is the squared
average diameter of $K$ in the same metric. Thus, the alignment condition (\ref{ali-1}) implies that
$K$ is equilateral in the metric $\bM_K$.
Furthermore, (\ref{ali-1}) implies (e.g., see \cite{Hua06,HR11}) that $K$ is aligned
with $\bM_K$ in the sense that the principal directions of the circumscribed ellipsoid of $K$
coincide with the directions of the eigenvectors of $\bM_K$ and the lengths of the principal axes
of the ellipsoid are inversely proportional to the square root of the eigenvalues.

It should be pointed out that (\ref{eq-1}) and (\ref{ali-1}) are highly nonlinear and in actual computation,
it is hard, if not impossible,
to obtain a mesh to satisfy them exactly. Thus, it is more practical to use meshes satisfying relaxed conditions.
We consider quasi-$\bM$-uniform meshes which satisfy the inequalities
\begin{align}
& |K|\;\det(\bM_{K})^{\frac{1}{2}} \le C_{eq}\, \frac{\sigma_{h}}{N}, \quad \forall\, K \in \Th
\label{eq-2}
\\
& \frac{1}{d}\text{tr}((F_{K}')^{T}\bM_{K}F_{K}') \le C_{ali}\, \det((F_{K}')^{T}\bM_{K}F_{K}')^{\frac{1}{d}}
= C_{ali} |K|^{\frac 2 d} \det(\bM_{K})^{\frac{1}{d}},\quad \forall\, K \in \Th
\label{ali-2}
\end{align}
for some small or modest positive constants $C_{eq}$ and $C_{ali}$.
It is easy to show that (\ref{eq-2}) and (\ref{ali-2}) reduce to (\ref{eq-1}) and (\ref{ali-1})
when $C_{eq}=C_{ali}=1$. We notice that 
\[
\|  (F_{K}')^{T}\bM_{K}F_{K}' \|_2 \le \text{tr}((F_{K}')^{T}\bM_{K}F_{K}') \le d \; \|  (F_{K}')^{T}\bM_{K}F_{K}' \|_2 ,
\]
where $\| \cdot \|_2$ is the matrix 2-norm. Thus, the 2-norm and trace of a symmetric and positive
semi-definite matrix can be used equivalently. Moreover, (\ref{ali-2}) implies (e.g., see \cite[(15)]{KaHu2013}) that
\begin{equation}
\frac{1}{d}\text{tr}((F_{K}')^{-1}\bM_{K}^{-1} (F_{K}')^{-T} ) \le \left (\frac{d \, C_{ali}}{d-1}\right )^{d-1}
\, |K|^{- \frac 2 d} \det(\bM_{K})^{- \frac{1}{d}},\quad
\forall\, K \in \Th .
\label{ali-2+1}
\end{equation}

Having obtained the mathematical characterization of quasi-$\bM$-uniform meshes, we now consider the Hessian of
the eigenfunctions. We denote the Hessian of eigenfunction $u_j$ ($1 \le j \le k$) by $H(u_j)(\V{x})$ and
let $| H(u_j) |(\V{x}) = \sqrt{H(u_j)^2(\V{x})}$. For a given positive integer $k$, we assume that we can choose
a symmetric and uniformly positive semi-definite matrix-valued function $H = H(\V{x})$ such that
\begin{equation}
| H(u_j) |(\V{x}) \le H(\V{x}), \quad \V{x} \in \Omega,  \quad j = 1, ..., k
\label{H-1}
\end{equation}
where the sign ``$\le$'' is in the sense of negative semi-definiteness.
Inequality (\ref{H-1}) implies
\begin{equation}
\text{tr}(A^T | H(u_j) |(\V{x}) A) \le \text{tr}(A^T H(\V{x}) A),\quad\forall\, A \in \mathbb{R}^{d\times d},
\quad j = 1, ..., k .
\label{H-2}
\end{equation}
This follows from
\[
\text{tr}(A^T H(\V{x}) A) - \text{tr}(A^T | H(u_j) |(\V{x}) A) = \sum_i \V{a}_i^T (H(\V{x}) - | H(u_j) |(\V{x})) \V{a}_i
\ge 0,
\]
where $A$ has been expressed as $A = [\V{a}_1, ..., \V{a}_d]$.

There are several ways to choose $H$. A simple choice is $H(\V{x}) = \sum_{j=1}^k | H(u_j) |(\V{x})$.
Another one is the sequential intersection of $| H(u_j) |(\V{x}),\, j = 1, ..., k$. To explain the latter, we recall that
any pair of symmetric and positive definite matrices $A$ and $B$ can be simultaneously diagonalized as
\[
X^T A X = \text{diag}(\sigma_i), \quad X^T B X = I ,
\]
where $X$ is a nonsingular matrix. Then, the intersection of $A$ and $B$ is defined as
\[
A \cap B = X^{-T} \text{diag}(\max\{1, \sigma_i\}) X^{-1} .
\] 
(The word intersection is referred to the intersection of the ellipsoids defined by $A$ and $B$.)
Obviously, $A\cap B$ is symmetric and positive definite and satisfies
\[
A \le A\cap B, \quad B \le A\cap B .
\]
If $| H(u_j) |(\V{x}),\, j = 1, ..., k$, are not singular, $H(\V{x})$ can be defined as
\begin{equation}
H(\V{x}) = ((\cdots ((| H(u_1) | \cap | H(u_2) |) \cap | H(u_3) |) \cap \cdots) \cap | H(u_k) |) .
\label{H-3}
\end{equation}
This choice is more accurate than the first one. It is optimal when $k=2$ but not when $k \ge 3$.
We use this choice in our computation.

We now are ready to establish an error bound for the projection from $E_k$ to $V^h$.

\begin{thm}
\label{thm-err}
For a given positive integer $k$, we assume that the Hessian matrices, $H(u_j)$ $(1 \le j \le k)$, are uniformly
positive definite on $\Omega$. If the metric tensor is taken as
\begin{equation}
\bM_K =
\det(H_K)^{-\frac{1}{d+2}} \max\limits_{\V{x} \in K} \| H_K \bD(\V{x}) \|_2^{\frac{2}{d+2}}
 \left ( \frac{1}{|K|}\| H_K^{-1} H \|_{L^2(K)}^{2} \right )^{\frac{2}{d+2}} \cdot  H_K,
\quad \forall K \in \Th
\label{M-1}
\end{equation}
where $H_K$ is the average of $H$ over $K$,
and if the mesh satisfies the conditions (\ref{eq-2}) and (\ref{ali-2}),
then the error for the projection from $E_k$ to $V^h$ and error in the computed eigenvalues are bounded by
\begin{align}
& \sup_{\substack{v\in E_{k} \\ \|v\|_\rho = 1} } \|v-\Pi_h v \|_{E}^2
\le C N^{-\frac{2}{d}} C_{ali}^{d+1} C_{eq}^{\frac{2}{d}} \sigma_h^{\frac{d+2}{d}} ,
\label{error-3} \\
& 0 \le \frac{\lambda_j^{h}-\lambda_j}{\lambda_j^{h}}
\le  C N^{-\frac{2}{d}} C_{ali}^{d+1}C_{eq}^{\frac{2+d}{ d}}\sigma_{h}^{\frac{d+2}{d}},
\quad j = 1, ..., k
\label{thm-err-1}
\end{align}
where $C$ is a constant depending only on $k$ and 
\begin{equation}
\sigma_{h}= \sum_{K} |K| \det(\bM_K)^{\frac{1}{2}}
=\sum_{K}|K|\, \det(H_{K})^{\frac{1}{d+2}} \max_{x\in K}\|H_{K}\bD (\V{x}) \|_2^{\frac{d}{d+2}}
\left ( \frac{1}{|K|}\| H_K^{-1} H \|_{L^2(K)}^{2} \right )^{\frac{d}{d+2}} .
\label{sigma-1}
\end{equation}
\end{thm}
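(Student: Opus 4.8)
The plan is to reduce the projection error to a sum of local interpolation errors, bound each local term on the reference element using the Hessian, and then optimize over the mesh via the metric defined in (\ref{M-1}). The first step is to replace the elliptic projection $\Pi_h v$ by the linear interpolant $I_h v$, which is legitimate because $\Pi_h$ is the best approximation in the energy norm, so $\|v-\Pi_h v\|_E \le \|v - I_h v\|_E$. Writing $v = \sum_{j=1}^k c_j u_j$ with $\|v\|_\rho = 1$, the coefficients $c_j$ are bounded, and I would estimate $\|v - I_h v\|_E^2 = \sum_{K} \int_K (\bD \nabla(v - I_h v))\cdot \nabla(v-I_h v)\, d\V{x}$ element by element.

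Next, I would localize to the reference element $\hat K$ via the affine map $F_K$. On each $K$, the interpolation error in the (weighted) energy seminorm is controlled by the second derivatives of $v$, i.e.\ by $|H(v)|$, which by (\ref{H-1})--(\ref{H-2}) is dominated by $H$. The key local estimate should take the shape
\begin{equation}
\int_K (\bD \nabla(v-I_h v))\cdot\nabla(v-I_h v)\, d\V{x}
\le C\, \max_{\V{x}\in K}\|H_K \bD\|_2\; \text{tr}\!\left((F_K')^T H_K F_K'\right)\, \|H_K^{-1} H\|_{L^2(K)}^2 ,
\nonumber
\end{equation}
obtained by inserting $H_K^{1/2} H_K^{-1/2}$ to separate the diffusion-weighted geometric factor $\max\|H_K\bD\|_2\,\tr((F_K')^T H_K F_K')$ from the Hessian-size factor $\|H_K^{-1}H\|_{L^2(K)}^2$. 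Here $\tr((F_K')^T H_K F_K')$ is precisely the quantity controlled by the alignment condition.

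I would then invoke the mesh conditions. The alignment inequality (\ref{ali-2}) bounds $\frac{1}{d}\tr((F_K')^T H_K F_K')$ by $C_{ali}\,|K|^{2/d}\det(\bM_K)^{1/d}$, except that the geometric factor appearing naturally involves $\bM_K$, not $H_K$; since $\bM_K$ is a scalar multiple of $H_K$ by (\ref{M-1}), I would rescale and track the scalar, which is where the exponents $\tfrac{2}{d+2}$ and the powers of $\det(H_K)$ and $\max\|H_K\bD\|_2$ in (\ref{M-1}) are engineered to make the combined local bound reduce to exactly $|K|\det(\bM_K)^{1/2}$ times a uniform constant. Summing over $K$ and applying the equidistribution condition (\ref{eq-2}), which gives $|K|\det(\bM_K)^{1/2}\le C_{eq}\,\sigma_h/N$, collapses the sum to $\sigma_h \cdot (C_{eq}\sigma_h/N)^{2/d}$ up to the $C_{ali}^{d+1}$ factor coming from (\ref{ali-2+1}); this yields (\ref{error-3}). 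Finally, (\ref{thm-err-1}) follows immediately by substituting (\ref{error-3}) into Lemma~\ref{lem:RT83}, absorbing $C(k)$ into $C$ and picking up one extra factor of $C_{eq}^{2/d}$ from the bound on $\sigma_h$ itself.

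The main obstacle will be the bookkeeping in the rescaling step: verifying that the precise exponents in the definition (\ref{M-1}) of $\bM_K$ are exactly those that make the per-element bound proportional to $|K|\det(\bM_K)^{1/2}$, so that equidistribution can be applied cleanly. This amounts to checking a determinant/scaling identity relating $\det(\bM_K)$, $\det(H_K)$, $\max\|H_K\bD\|_2$, and $\|H_K^{-1}H\|_{L^2(K)}$, and confirming it is consistent with the stated $\sigma_h$ in (\ref{sigma-1}); the rest is a standard anisotropic interpolation-and-sum argument.
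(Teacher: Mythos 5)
Your overall roadmap coincides with the paper's: pass from $\Pi_h$ to $I_h$ by energy-norm optimality, localize to the reference element, dominate the eigenfunction Hessians by $H$, and let the exponents built into (\ref{M-1}) together with equidistribution (\ref{eq-2}) collapse the elementwise sum to $N^{-2/d}\sigma_h^{(d+2)/d}$; the ``determinant/scaling identity'' you defer to at the end, namely $\theta_K^{-2}\bigl(\tfrac{1}{|K|}\|H_K^{-1}H\|_{L^2(K)}^2\bigr)\max_{\V{x}\in K}\|\bM_K\bD(\V{x})\|_2=\det(\bM_K)^{1/2}$, is indeed exactly how the paper closes. However, your key local estimate is false as stated, and this is a genuine gap rather than bookkeeping. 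Transforming the energy integral to $\hat K$ produces \emph{two} geometric factors: the Hessian factor $\|(F_K')^T H_K F_K'\|_2^2$ (squared, coming from the $H^2(\hat K)$ seminorm) and the diffusion factor $\max_{\V{x}\in K}\|(F_K')^{-1}\bD(\V{x})(F_K')^{-T}\|_2$, which after inserting $H_K$ against $\bD$ contributes $\|((F_K')^T H_K F_K')^{-1}\|_2$. Writing $\lambda_{\max},\lambda_{\min}$ for the extreme eigenvalues of $(F_K')^T H_K F_K'$, the correct elementwise bound therefore carries $\lambda_{\max}^2/\lambda_{\min}$, whereas yours carries only $\tr((F_K')^T H_K F_K')\approx\lambda_{\max}$. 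The two agree only when $K$ is aligned and equilateral in the $H_K$-metric, i.e.\ only \emph{after} the mesh conditions are imposed, so they cannot be used in the order you propose (universal local bound first, mesh conditions second). Concretely, take $\bD=I$, $H=H_K=I$, $v$ with Hessian bounded by $I$, and $K$ a sliver triangle of diameter $h$ and height $\rho\ll h$: the true energy error scales like $(h^2/\rho)^2|K|$ while your right-hand side scales like $h^2|K|$. Your own constant-tracking betrays the problem: a trace-to-first-power local bound would yield only $C_{ali}^{1}$ after (\ref{ali-2}), yet you quote $C_{ali}^{d+1}$ ``from (\ref{ali-2+1})'' even though nothing remains in your local estimate for (\ref{ali-2+1}) to act on. In the paper, $C_{ali}^{d+1}$ arises as $C_{ali}^2$ from squaring the alignment bound on the Hessian factor times $C_{ali}^{d-1}$ from applying (\ref{ali-2+1}) to the diffusion factor, which is kept explicit until that point.

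There is a second, smaller gap. For $v=\sum_j c_j u_j$ you control the local error by $|H(v)|$ and claim (\ref{H-1})--(\ref{H-2}) dominate it by $H$. They do not: (\ref{H-1}) covers only the individual eigenfunctions, and the matrix absolute value is not subadditive in the semidefinite order, so $|H(v)|\le\bigl(\sum_j|c_j|\bigr)H$ does not follow from $|H(u_j)|\le H$. The paper sidesteps this by applying the triangle and Cauchy--Schwarz inequalities \emph{before} localizing, obtaining $\|v-I_hv\|_E^2\le\sum_j\|u_j-I_hu_j\|_E^2$ and then treating each $u_j$ separately; alternatively your version can be repaired at the norm level via $\|(F_K')^TH(v)F_K'\|_2\le\sum_j|c_j|\,\|(F_K')^T|H(u_j)|F_K'\|_2\le\sqrt{k}\,\|(F_K')^THF_K'\|_2$, using that $0\preceq S\preceq T$ implies $\|A^TSA\|_2\le\|A^TTA\|_2$. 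Either fix is routine, but as written the step is unjustified. (Your final remark about picking up an extra $C_{eq}^{2/d}$ is also off---the stated (\ref{thm-err-1}) differs from (\ref{error-3}) by a factor $C_{eq}$, harmless since $C_{eq}\ge1$---but that is cosmetic compared with the local estimate.)
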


\begin{proof}
We need only to prove  (\ref{error-3}) since (\ref{thm-err-1}) follows directly from Lemma~\ref{lem:RT83} and (\ref{error-3}).

For any $v \in H_0^1(\Omega)$, by the definition of elliptic projection we have
\[
a(v -\Pi_h v ,w^{h})=0,\quad\forall w^{h}\in V^{h}.
\]
From this and the Cauchy-Schwarz inequality,
\[
\|v-\Pi_h v \|_{E}^{2}=a(v-\Pi_h v ,v-\Pi_h v)=a(v-\Pi_h v,v-I_{h}v)
\le \|v-\Pi_h \|_{E} \|v-I_{h}v\|_{E},
\]
where $I_{h}$ is the interpolation operator associated
with the finite element space $V^{h}$. Thus, we have 
\begin{equation}
\|v-\Pi_h v \|_{E}\le\|v-I_h v \|_{E}, \quad \forall v \in H_0^1(\Omega) .
\label{interp-1}
\end{equation}

Notice that any $v \in E_k$ with $\| v \|_\rho = 1$ can be expressed as
\[
v = \sum\limits_{j=1}^{k} \beta_j u_j \quad \text{ with }\quad \sum\limits_{j=1}^k \beta_j^2 = 1.
\]
From (\ref{interp-1}), the triangle inequality, and the Cauchy-Schwarz inequality, we have
\begin{align*}
\|v-\Pi_h v \|_{E}^2 &\le \|v-I_h v \|_{E}^2  = \|\sum_{j} \beta_j (u_j - I_h u_j) \|_{E}^2\\
& \le \left ( \sum_{j} |\beta_j| \; \| u_j - I_h u_j\|_E \right )^2 \\
& \le \sum_j \beta_j^2 \cdot \sum_j  \|u_j - I_h u_j \|_{E}^2 \\
& = \sum_j  \|u_j - I_h u_j \|_{E}^2 ,
\end{align*}
which implies
\begin{equation}
\sup_{\substack{v\in E_{k} \\ \|v\|_\rho =1} } \|v-\Pi_h v \|_{E}^2
\le \sum_j  \|u_j - I_h u_j \|_{E}^2 .
\label{error-1}
\end{equation}

If we denote the gradient operator with respect to the coordinate $\V{\xi}$ on $\hat{K}$ by
$\hat{\nabla}$, from the chain rule we have $\nabla=(F_{K}')^{-T}\hat{\nabla}$. Then, by changing variables
we get, for $ 1 \le j \le k$,
\begin{align*}
\|u_j - I_h u_j \|_{E}^2 & = \sum_{K} \int_K (\nabla (u_j - I_h u_j))^T \bD(\V{x}) \nabla (u_j - I_h u_j) d \V{x} \\
& = \sum_{K}|K| \int_{\hat{K}}\hat{\nabla}(u_j-I_{h}u_j)^{T}
(F_{K}')^{-1}\mathbb{D}(\V{x}) (F_{K}')^{-T}\hat{\nabla}(u_j-I_{h}u_j) d\V{\xi}\\
 & \le\sum_{K}|K|\;\|\hat{\nabla}(u_j-I_{h}u_j)\|_{L^{2}(\hat{K})}^{2}
 \max_{\V{x}\in K}\|(F_{K}')^{-1}\mathbb{D}(\V{x})(F_{K}')^{-T}\|_{2}.
\end{align*}
 From \cite[Theorem 5.1.1 and Lemma 5.1.5]{HR11}, it follows that 
\begin{align*}
\|\hat{\nabla}(u_j-I_{h}u_j)\|_{L^{2}(\hat{K})}^2 \le C |u_j|_{H^{2}(\hat{K})}^2
\le \frac{C}{|K|}\int_{K}\| (F_{K}')^{T}|H(u_j)|F_{K}'\|_2^{2}d\V{x}.
\end{align*}
Combining the above results with (\ref{error-1}) and using (\ref{H-2}) (and the equivalence between the trace and 2-norm
of symmetric and positive definite matrices), we get
\begin{align*}
& \sup_{\substack{v\in E_{k} \\ \|v\|_\rho =1} } \|v-\Pi_h v \|_{E}^2
\notag \\
& \le C \sum_K  \int_{K}\sum_j \| (F_{K}')^{T}|H(u_j)|F_{K}'\|_2^{2}d\V{x}
\cdot \max_{\V{x}\in K}\|(F_{K}')^{-1}\mathbb{D}(\V{x})(F_{K}')^{-T}\|_{2} \\
& \le C \sum_K \int_{K}\| (F_{K}')^{T}H F_{K}'\|_2^{2}d\V{x} 
\cdot \max_{\V{x}\in K}\|(F_{K}')^{-1}\mathbb{D}(\V{x})(F_{K}')^{-T}\|_{2} ,
\end{align*}
where a factor $k$ has been absorbed into the generic constant $C$. Noticing that
\[
\| (F_{K}')^{T}H F_{K}'\|_2 \le \| (F_{K}')^{T}H_K F_{K}'\|_2  \| H_K^{-1} H \|_2,
\]
we have
\begin{align}
\sup_{\substack{v\in E_{k} \\ \|v\|_\rho =1} } \|v-\Pi_h v \|_{E}^2
\le C \sum_K \| (F_{K}')^{T}H_K F_{K}'\|_2^{2} \cdot \| H_K^{-1} H \|_{L^2(K)}^{2}
\cdot \max_{\V{x}\in K}\|(F_{K}')^{-1}\mathbb{D}(\V{x})(F_{K}')^{-T}\|_{2} .
\label{error-2}
\end{align}

We now use the mesh conditions (\ref{eq-2}) and (\ref{ali-2}) to further develop the above error bound.
The metric tensor (\ref{M-1}) can be rewritten as
\begin{equation}
\bM_K = \theta_K H_K,
\label{M-2}
\end{equation}
where
\begin{equation}
\theta_K = \det(H_K)^{-\frac{1}{d+2}} \max\limits_{\V{x} \in K} \| H_K \bD(\V{x}) \|_2^{\frac{2}{d+2}}
\left ( \frac{1}{|K|}\| H_K^{-1} H \|_{L^2(K)}^{2} \right )^{\frac{2}{d+2}} .
\label{theta-1}
\end{equation}
Then, from (\ref{ali-2}) it follows that
\[
\| (F_{K}')^{T}H_K F_{K}' \|_2 = \frac{1}{\theta_K} \| (F_{K}')^{T}\bM_K F_{K}' \|_2 
\le \frac{1}{\theta_K} d C_{ali} |K|^{\frac{2}{d}} \det(\bM_K)^{\frac{1}{d}} .
\]
Moreover, from (\ref{ali-2+1}) we have
\begin{align*}
\max_{\V{x}\in K}\|(F_{K}')^{-1}\mathbb{D}(\V{x})(F_{K}')^{-T}\|_{2}  & \le 
\max_{\V{x}\in K}\|(F_{K}')^{-1} \bM_K^{-1} (F_{K}')^{-T}\|_{2} \| \bM_K \mathbb{D}(\V{x}) \|_2 \\
& \le d \left (\frac{d\, C_{ali}}{d-1}\right )^{d-1} |K|^{-\frac{2}{d}} \det(\bM_K)^{-\frac{1}{d}} 
\max_{\V{x}\in K}\| \bM_K \mathbb{D}(\V{x}) \|_2 .
\end{align*}
Combining these with (\ref{error-2}) and absorbing $d$ into $C$, we have
\[
\sup_{\substack{v\in E_{k} \\ \|v\|_\rho =1} } \|v-\Pi_h v \|_{E}^2
\le C C_{ali}^{d+1} \sum_K \frac{|K|}{\theta_K^2} \left ( |K| \det(M_K)^{\frac{1}{2}}\right )^{\frac{2}{d}}
\left ( \frac{1}{|K|}\| H_K^{-1} H \|_{L^2(K)}^{2} \right ) \max_{\V{x}\in K}\| \bM_K \mathbb{D}(\V{x}) \|_2  .
\]
From the equidistribution condition (\ref{eq-2}), this gives
\[
\sup_{\substack{v\in E_{k} \\ \|v\|_\rho =1} } \|v-\Pi_h v \|_{E}^2
\le C N^{-\frac{2}{d}} C_{ali}^{d+1} C_{eq}^{\frac{2}{d}} \sigma_h^{\frac{2}{d}}
\sum_K \frac{|K|}{\theta_K^2} \left ( \frac{1}{|K|}\| H_K^{-1} H \|_{L^2(K)}^{2} \right )
\max_{\V{x}\in K}\| \bM_K \mathbb{D}(\V{x}) \|_2  .
\]
Then, inequality  (\ref{error-3})  follows from this and the equality 
\[
\frac{1}{\theta_K^2} \left ( \frac{1}{|K|}\| H_K^{-1} H \|_{L^2(K)}^{2} \right ) \max_{\V{x}\in K}\| \bM_K \mathbb{D}(\V{x}) \|_2
= \det(\bM_K)^{\frac{1}{2}} ,
\]
which can be directly verified using (\ref{M-2}) and (\ref{theta-1}).
\end{proof}

\begin{rem}
\label{rem:3.1}
The assumption that the Hessian matrices, $H(u_j)$ ($ 1 \le j \le k$), are uniformly positive definite on $\Omega$
is not essential. When this assumption is not satisfied,
$|H(u_j)|$ is not guaranteed to be uniformly positive definite on $\Omega$ but remains to be symmetric and
positive semi-definite. In this case, $|H(u_j)|$ can be regularized ($|H(u_j)| \to |H(u_j)| + \alpha_h I $)
with a prescribed small positive constant value of $\alpha_h$ or a more balanced, solution-dependent
choice of $\alpha_h$ (e.g., see \cite{HR11}).
\hfill\qed
\end{rem}

\begin{rem}
\label{rem:3.2}
The right-hand side of (\ref{sigma-1}) is a Riemann sum. When the mesh is sufficiently fine, we can reasonably
expect that
\begin{equation}
\sigma_h \approx \int_\Omega \det(H)^{\frac{1}{d+2}} \| H \bD \|_2^{\frac{d}{d+2}} d \V{x}
= \left \| \sqrt{\det(H)^{\frac{1}{d}} \| H \bD \|_2} \right \|_{L^{\frac{2 d}{d+2}}(\Omega)}^{\frac{2 d}{d+2}} .
\label{sigma-2}
\end{equation}
From this and (\ref{thm-err-1}) we can conclude that $(\lambda_j^h - \lambda_j)/\lambda_j^h = \mathcal{O}(N^{-2/d})$
for $j=1, ..., k$. In words,  the error in the computed eigenvalues decreases at a second order rate
in terms of the average element diameter $\overline{h} = N^{-1/d}$ as $N \to \infty$.

For the case with $\bD = I$, it is well known that the solution dependent factor
in an interpolation error bound is $(\int_\Omega \| H \|_2^2 d \V{x})^{1/2} = \left \| H \right \|_{L^2(\Omega)}$
for a uniform mesh.
From this and Lemma~\ref{lem:RT83} we know that the solution dependent factor in the error bound
for the eigenvalues is $\left \| H \right \|_{L^2(\Omega)}^2$. 
On the other hand, the corresponding factor for a quasi-$\bM$-uniform mesh is
$\sigma_h^{(d+2)/d}$. Since
\[
\sigma_h^{\frac{d+2}{d}} \approx
\left \| \sqrt{\det(H)^{\frac{1}{d}} \| H \bD \|_2} \right \|_{L^{\frac{2 d}{d+2}}(\Omega)}^2
\le \left \| H  \right \|_{L^{\frac{2 d}{d+2}}(\Omega)}^2
\le \left \| H \right \|_{L^2(\Omega)}^2 ,
\]
we can conclude that an adaptive mesh leads to a smaller solution dependent factor in the error bound
for the eigenvalues than a uniform mesh.
\hfill\qed
\end{rem}

\begin{rem}
\label{rem:3.3}
Theorem~\ref{thm-err} reveals how the error is bounded for quasi-$\bM$-uniform meshes. It also shows how
the metric tensor can be defined.
It is unclear whether or not (\ref{M-1}) is optimal,
but it seems to be the best one we can get in the current approach.
\hfill\qed
\end{rem}

\begin{rem}
\label{rem:3.4}
It should be pointed out that the estimate (\ref{thm-err-1}) and the metric tensor (\ref{M-1})
in Theorem~\ref{thm-err} are a priori in the sense that they depend
on the exact eigenfunctions which we are seeking.
In practice, the quasi-$\bM$-uniform meshes associated with (\ref{M-1}) can be generated using an approximate Hessian
recovered from the current computed solution through a Hessian recovery technique.
A number of such techniques are available but known to produce nonconvergent recovered
Hessian from a linear finite element approximation  (e.g., see Kamenski \cite{Kam09}).
Nevertheless, it is shown by Kamenski and Huang \cite{KaHu2013} that
a linear finite element solution of an elliptic BVP converges
at a second order rate as the mesh is refined if the recovered Hessian, which is used
to generate quasi-$\bM$-uniform meshes, satisfies a closeness assumption.
Numerical experiment shows that this closeness assumption is satisfied by 
the approximate Hessian obtained with commonly used Hessian recovery methods.
One of the methods, which is based on least squares fitting, is used in our computation.
\hfill\qed
\end{rem}

\section{Numerical examples}
\label{SEC:examples}

In this section we present numerical results obtained for four two-dimensional examples
to illustrate the error estimate developed in the previous section.
The iterative procedure used in the computation is described in Fig.~\ref{f.1}.
A least squares fitting method is used for Hessian recovery. More specifically, 
a quadratic polynomial is constructed locally for each vertex via least squares fitting to neighboring
nodal function values and then an approximate Hessian at the vertex is obtained
by differentiating the polynomial.
The recovered Hessian is regularized with a prescribed small positive constant $\alpha = 10^{-2}$:
$|H(u_j)| \to |H(u_j)| + \alpha_h I $, $j = 1, ..., k$.
The metric tensor is computed according to (\ref{M-1}) on the current mesh
with $k=4$ (first four eigenvalues) and the Hessian intersection, (\ref{H-3}).
The computed metric tensor and the current mesh (served as the background mesh) are then input into
the Delaunay-type triangular mesh generator BAMG ({B}idimensional {A}nisotropic {M}esh {G}enerator,
developed by Hecht \cite{Hec97}) to generate a corresponding quasi-$\bM$-uniform mesh.
BAMG is known (e.g., see Kamenski and Huang \cite{KaHu2013}) to be capable of producing quasi-$\bM$-uniform
meshes with modest constants $C_{eq}$ and $C_{ali}$.
The algebraic eigenvalue problem (\ref{fem-2}) is solved using the MATLAB large and sparse eigenvalue solver {\tt eigs()}.
The exact eigenvalues of all examples are unavailable. Reference values
used for computing the error are obtained with a very fine adaptive anisotropic mesh.

For comparison purpose, from time to time we include numerical results obtained with {(almost)} uniform meshes
(generated using BAMG with $\bM = I$) and isotropic adaptive meshes. The latter are generated
with a metric tensor based on the minimization of a bound on the $H^1$ semi-norm of linear interpolation error
of eigenfunctions (cf. \cite[(5.190)]{HR11}), i.e.,
\beq
\label{M-3}
\bM_K = \| H_K \|_2^{\frac{4}{d+2}} I ,\quad \forall\, K \in \cT_h .
\eeq

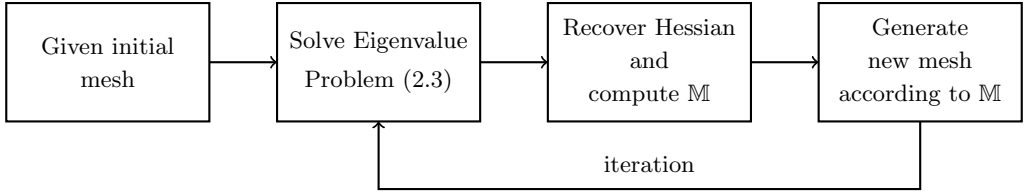
\begin{figure}
\centering 
\tikzset{my node/.code=\ifpgfmatrix\else\tikzset{matrix of nodes}\fi}
\begin{tikzpicture}[every node/.style={my node},scale=0.45] 
{\footnotesize
\draw[thick] (0,0) rectangle (6,3.5);
\draw[thick] (8,0) rectangle (14,3.5);
\draw[thick] (16,0) rectangle (22,3.5);
\draw[thick] (24,0) rectangle (30,3.5);
\draw[->,thick] (6,1.75)--(8,1.75);
\draw[->,thick] (14,1.75)--(16,1.75);
\draw[->,thick] (22,1.75)--(24,1.75);
\draw[->,thick] (27,0)--(27,-2)--(11,-2)--(11,0);
\node[above] at (19,-2) {iteration\\};
\node (node1) at (3,1.75) {Given initial\\mesh\\}; 
\node (node2) at (11,1.75) {Solve Eigenvalue\\ Problem (\ref{fem-2})\\};
\node (node3) at (19,1.75) {Recover Hessian\\  and \\ compute $\bM$\\}; 
\node (node4) at (27,1.75) {Generate\\ new mesh\\ according to $\bM$\\}; 
}
\end{tikzpicture} 
\caption{An iterative procedure for adaptive mesh solution of eigenvalue problem (\ref{eigen-1}).}
\label{f.1}
\end{figure}

\subsection{The Laplace-Beltrami operator on parametrized surfaces}
\label{exam:bunny}

Numerical solution of BVPs of the Laplace-Beltrami operator on surfaces
has been extensively investigated by many researchers; e.g., see Dziuk and Elliott \cite{Dziuk-2013}
and Deckelnick et al. \cite{Dziuk-2005} and references therein. We consider here
numerical solution of eigenvalue problems of the operator on parameterized surfaces using anisotropic mesh adaptation.
This is motivated by its application in geometric shape analysis \cite{Reuter-09,Reuter-06}
where the eigenvalues and eigenfunctions of the Laplace-Beltrami operator
are used to extract shape-DNA of any 2D or 3D manifold (surface or solid)
and provide insights in the structure and morphology of the shape.

Denote a local parametrization of a parameterized surface by $X = X(x,y),\, Y = Y(x,y), \, Z=Z(x, y)$
for $(x, y)\in \Omega$.
Let $\nabla = (\partial/\partial x, \partial/\partial y)^T$. Then the eigenvalue problem of the Laplace-Beltrami operator
reads as
\begin{equation}
\begin{cases}
-\nabla\cdot ( \det(G)^{\frac{1}{2}} G^{-1} \nabla u) = \lambda\,  \det(G)^{\frac{1}{2}} u,
&\quad\text{ in }\Omega \\
u = 0, &\quad \text{ on }\partial\Omega
\end{cases}
\label{LB-1}
\end{equation}
where $G$ is the Riemannian metric tensor on the surface and defined as
\[
G = \left (\frac{\partial (X, Y, Z)}{\partial (x, y)}\right )^T \frac{\partial (X, Y, Z)}{\partial (x, y)}.
\]

We consider an example where the surface is defined by the gray level image of the Stanford bunny
(cf. Fig.~\ref{stanford-bunny-f1:a}). Notice that the photo is not taken directly from a real object. It is a synthesized graph
from a reconstructed polygonal surface with a mesh of 35947 vertices and 69451 triangles. 
We use bilinear interpolation to define values on the surface between grid points. 
Denote the surface by $Z = \psi(x,y)$. We can rewrite (\ref{LB-1})
in the form of (\ref{eigen-1}) with
\begin{equation}
\bD = \frac{1}{(1+\|\nabla \psi \|_2^2)^{\frac{1}{2}}} \left ( \begin{array}{rr}
1 + (\frac{\partial \psi}{\partial y})^2 & - \frac{\partial \psi}{\partial x} \frac{\partial \psi}{\partial y} \\
- \frac{\partial \psi}{\partial x} \frac{\partial \psi}{\partial y}& 1 + (\frac{\partial \psi}{\partial x})^2 \end{array} \right ) ,
\quad \rho = (1+\|\nabla \psi \|_2^2)^{\frac{1}{2}} .
\label{LB-2}
\end{equation}

Fig.~\ref{stanford-bunny-f1:b} shows a typical adaptive mesh obtained with the anisotropic mesh adaptation
method described in the previous section. From the figure one can see that the mesh captures
the detail of the bunny photo very well: mesh elements are concentrated along the boundary and other features
where the gray level changes significantly. 
The relative error (cf. (\ref{thm-err-1})) for the first four eigenvalues is plotted as a function of the number
of elements in Fig.~\ref{fig:error44}. We can see that the convergence history is similar for all of the four eigenvalues.
Interestingly, the asymptotic convergence order (second order or $\mathcal{O}(\bar{h}^2)$,
where $\bar{h} = \sqrt{|\Omega|/N}$)
is not reached until very large $N$. This indicates that the bunny photo requires a huge number
of elements to resolve its complex features. For comparison purpose, we also plot the convergence history
for uniform and isotropic adaptive meshes. It can be seen that anisotropic meshes
produce much more accurate solutions. In particular, for both uniform and isotropic adaptive meshes the error
does not reach its asymptotic convergence order yet at $N \approx 10^{5}$.

The contours of the first four eigenfunctions together with respective color scales are shown in 
Fig.~\ref{fig:ex44-eigenfun}. It is interesting to see that segmentations are induced by the level
sets of the eigenfunctions and capture different regions of the object. This phenomenon has been
observed by Reuter et al. \cite{Reuter-06} where they compute the eigenfunctions directly on surfaces and
use the nodal domains (level sets of eigenfunctions on surface) to analyze surface shapes.

\begin{figure}[thb]
\begin{minipage}{.45\linewidth}
\centering
\subfloat[]{\label{stanford-bunny-f1:a}\includegraphics[scale=.58]{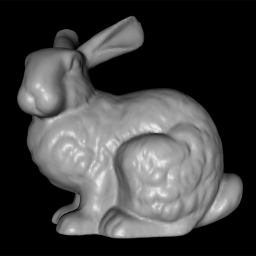}}
\end{minipage}%
\begin{minipage}{.45\linewidth}
\centering
\subfloat[]{\label{stanford-bunny-f1:b}\includegraphics[scale=.8]{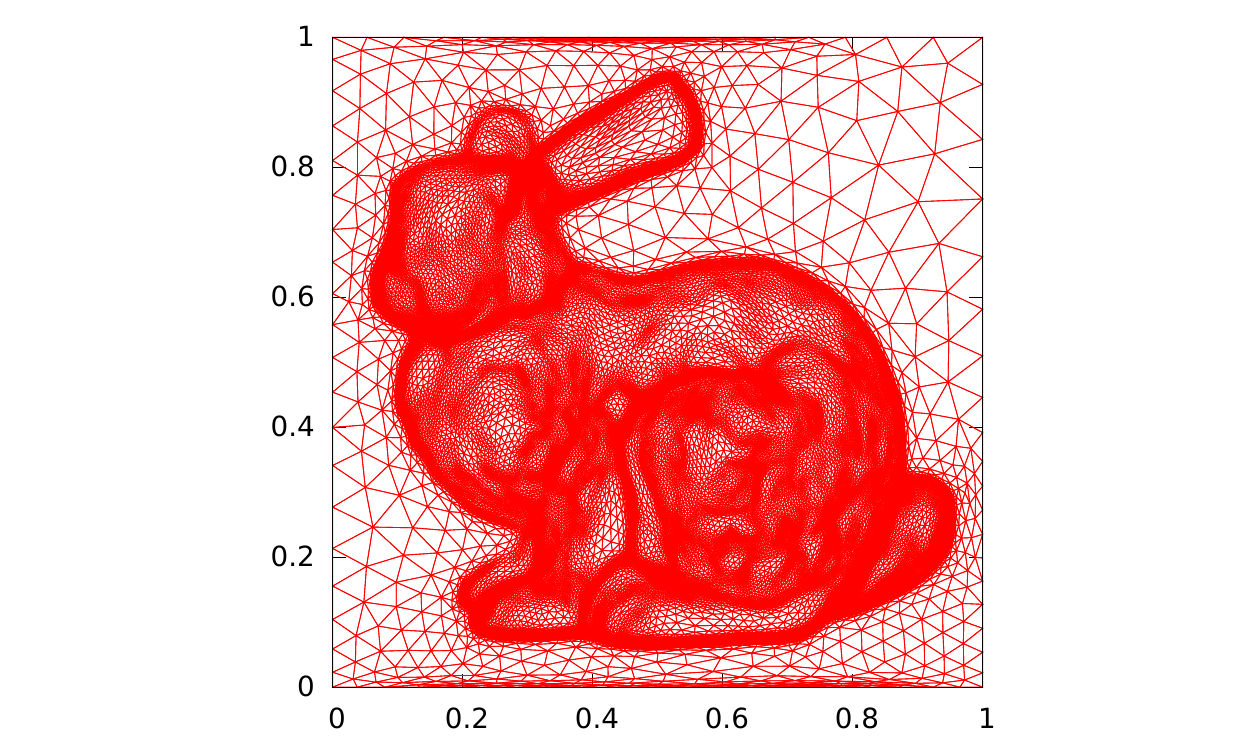}}
\end{minipage}
\caption{For the example in \S\ref{exam:bunny}, an adaptive mesh obtained for the Laplace-Beltrami operator on
the surface defined by the gray level image of the Stanford bunny.}
\label{stanford-bunny-f1}
\end{figure}

\begin{figure}
\begin{minipage}{.5\linewidth}
\centering
\subfloat[]{\label{error44:a}\includegraphics[scale=.9]{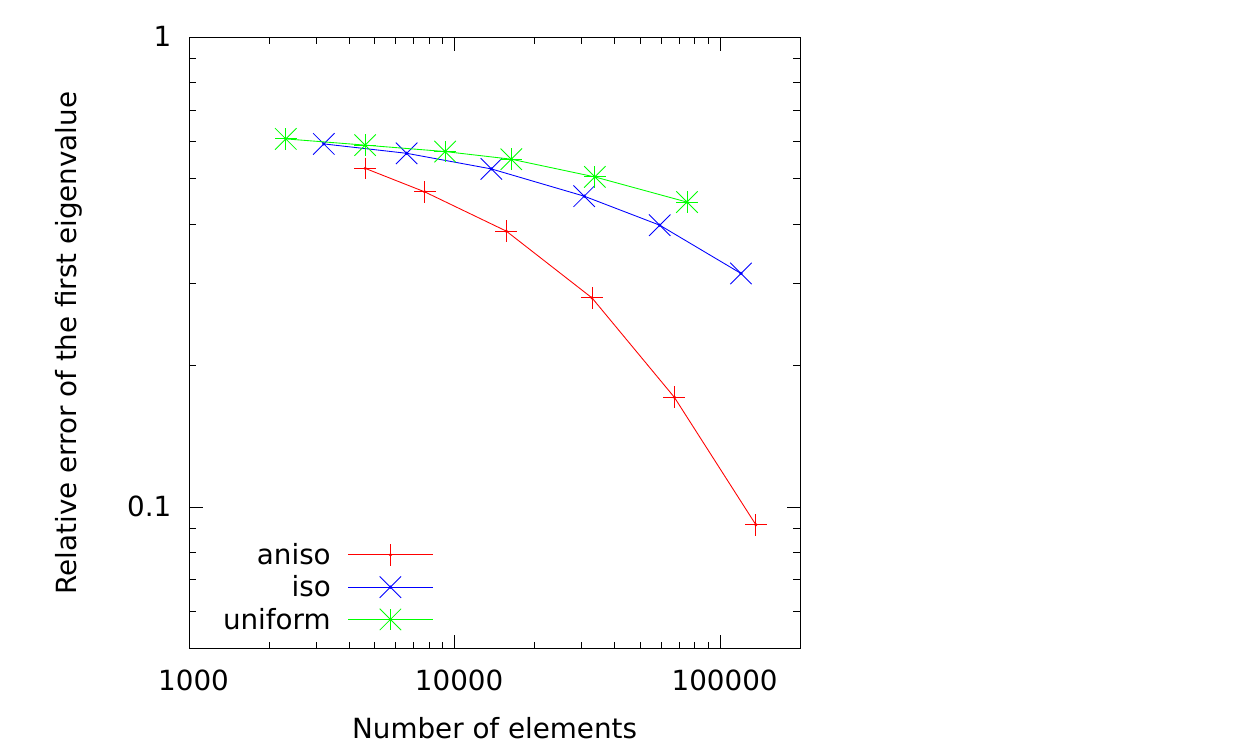}}
\end{minipage}%
\begin{minipage}{.5\linewidth}
\centering
\subfloat[]{\label{error44:b}\includegraphics[scale=.9]{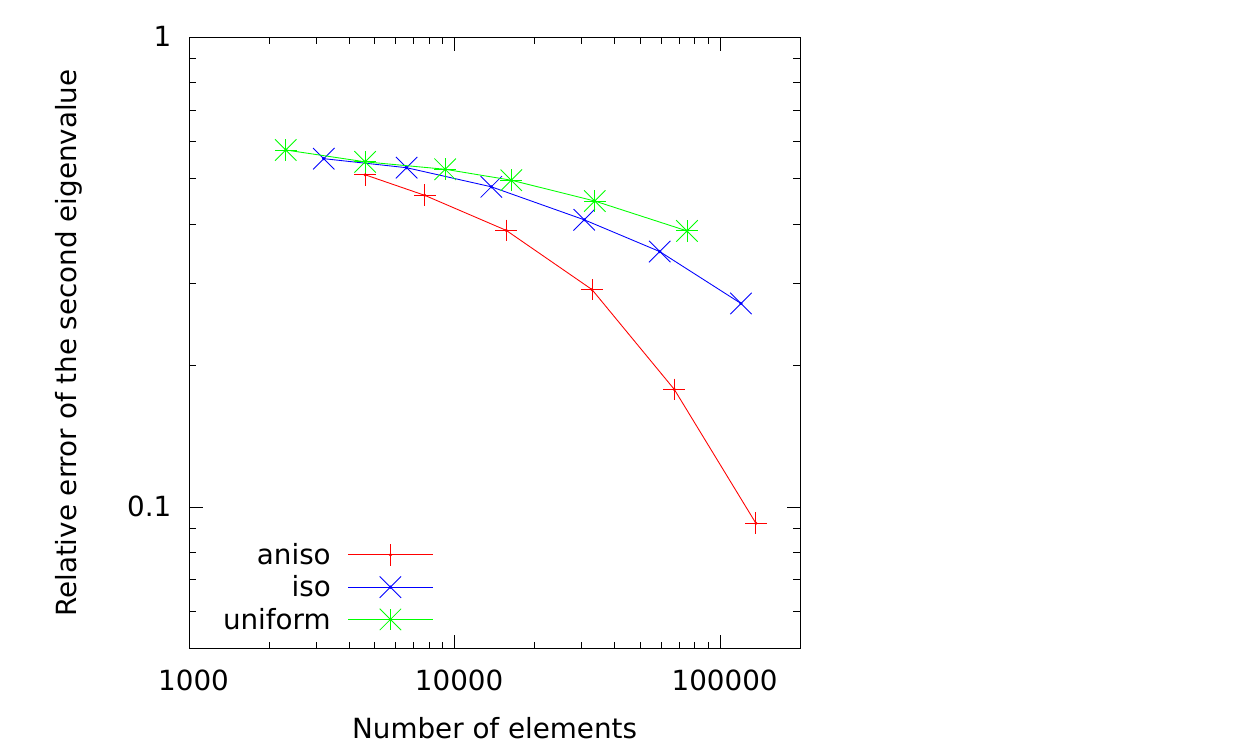}}
\end{minipage}\par\medskip
\begin{minipage}{.5\linewidth}
\centering
\subfloat[]{\label{error44:c}\includegraphics[scale=.9]{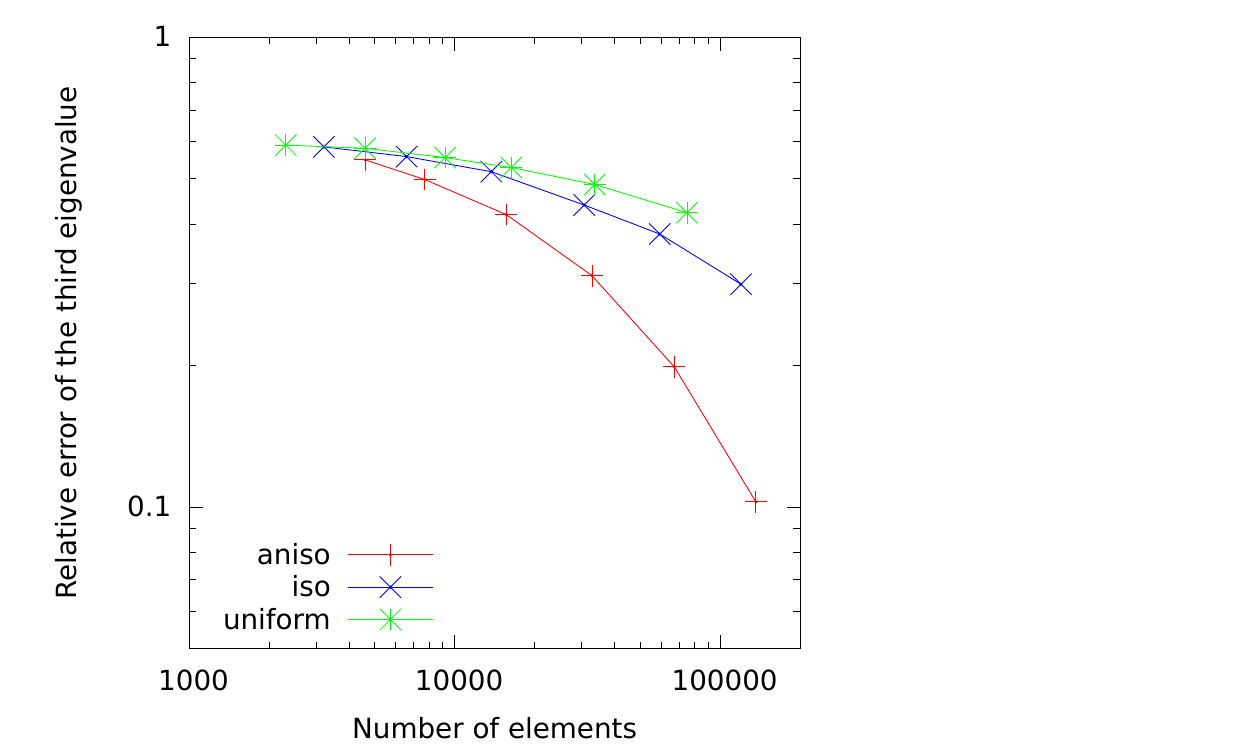}}
\end{minipage}
\begin{minipage}{.5\linewidth}
\centering
\subfloat[]{\label{error44:d}\includegraphics[scale=.9]{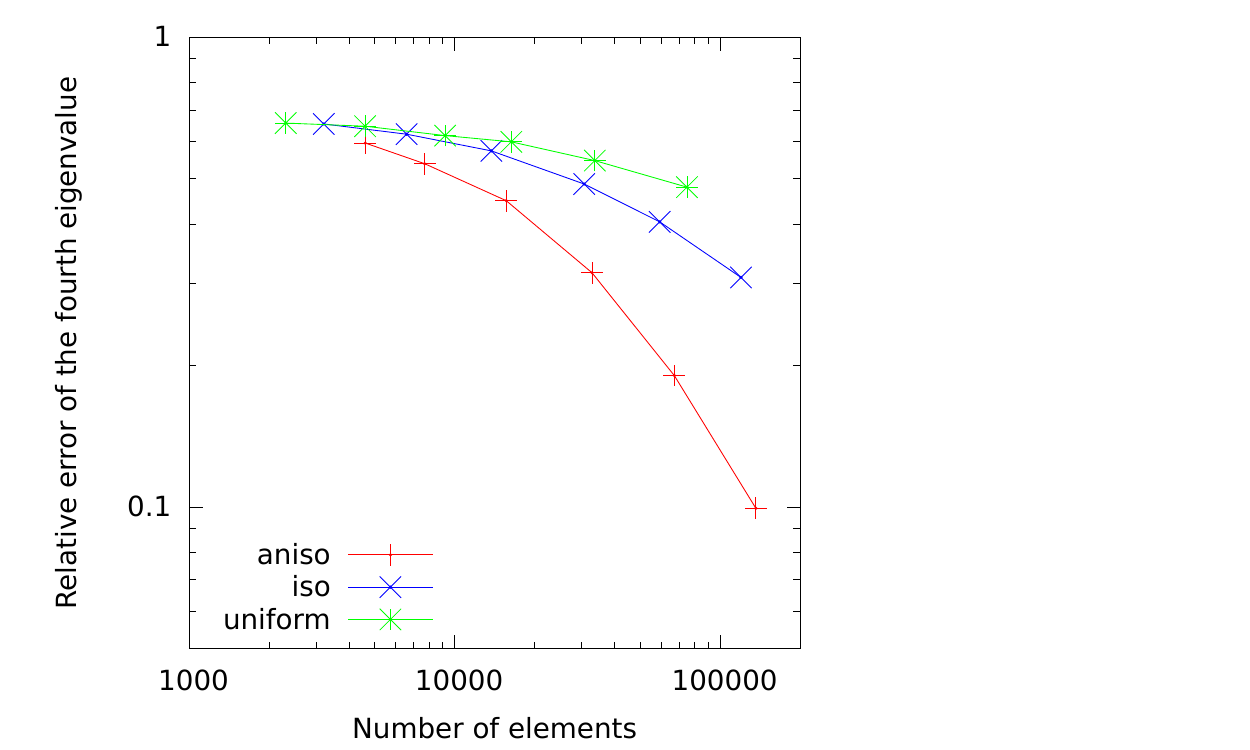}}
\end{minipage}
\caption{The relative error in the first four eigenvalues for the example in \S\ref{exam:bunny}.}
\label{fig:error44}
\end{figure}

\begin{figure}
\begin{minipage}{.5\linewidth}
\centering
\subfloat[]{\label{eigenfun44:a}\includegraphics[scale=.4]{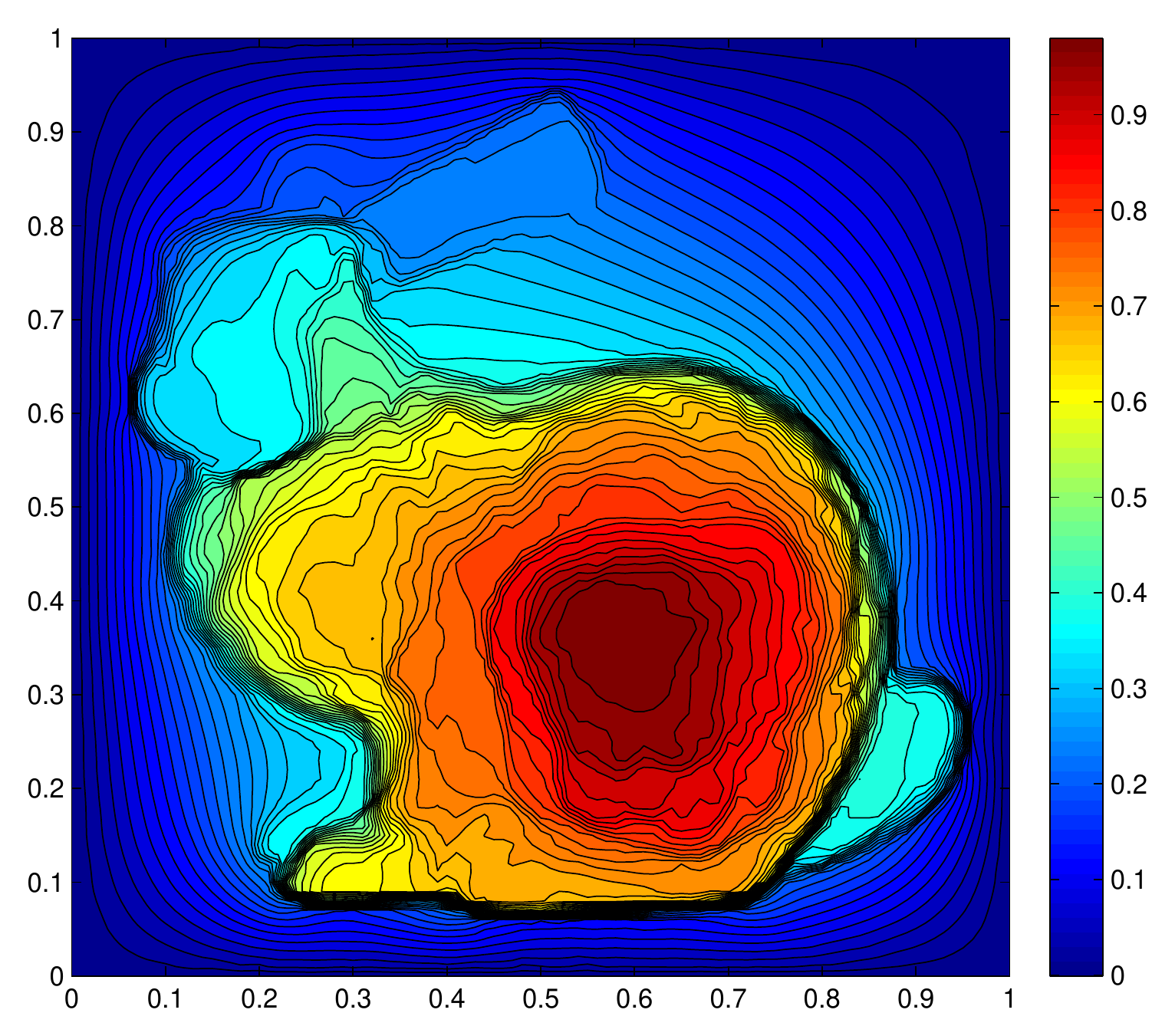}}
\end{minipage}%
\begin{minipage}{.5\linewidth}
\centering
\subfloat[]{\label{eigenfun44:b}\includegraphics[scale=.4]{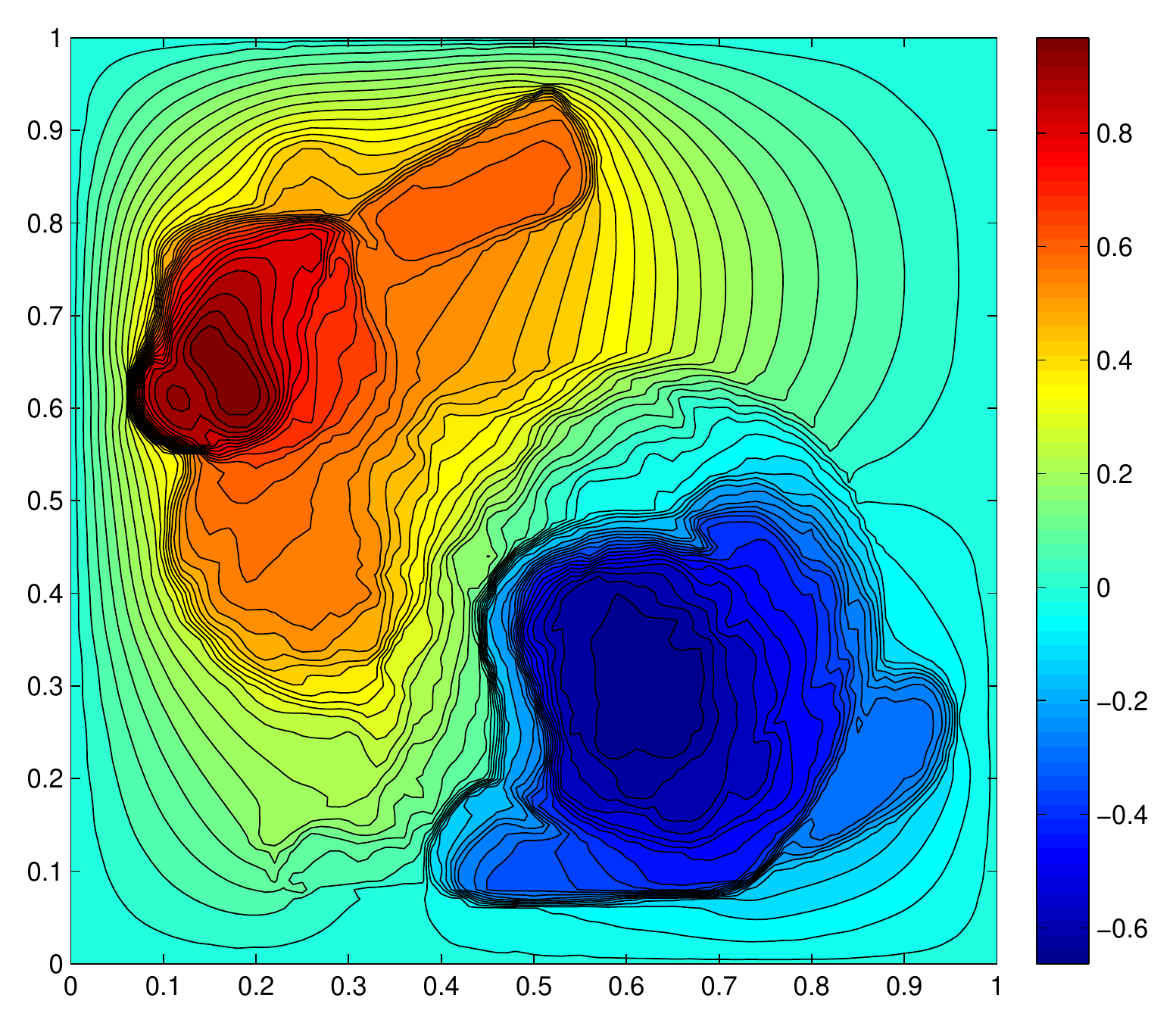}}
\end{minipage}\par\medskip
\begin{minipage}{.5\linewidth}
\centering
\subfloat[]{\label{eigenfun44:c}\includegraphics[scale=.4]{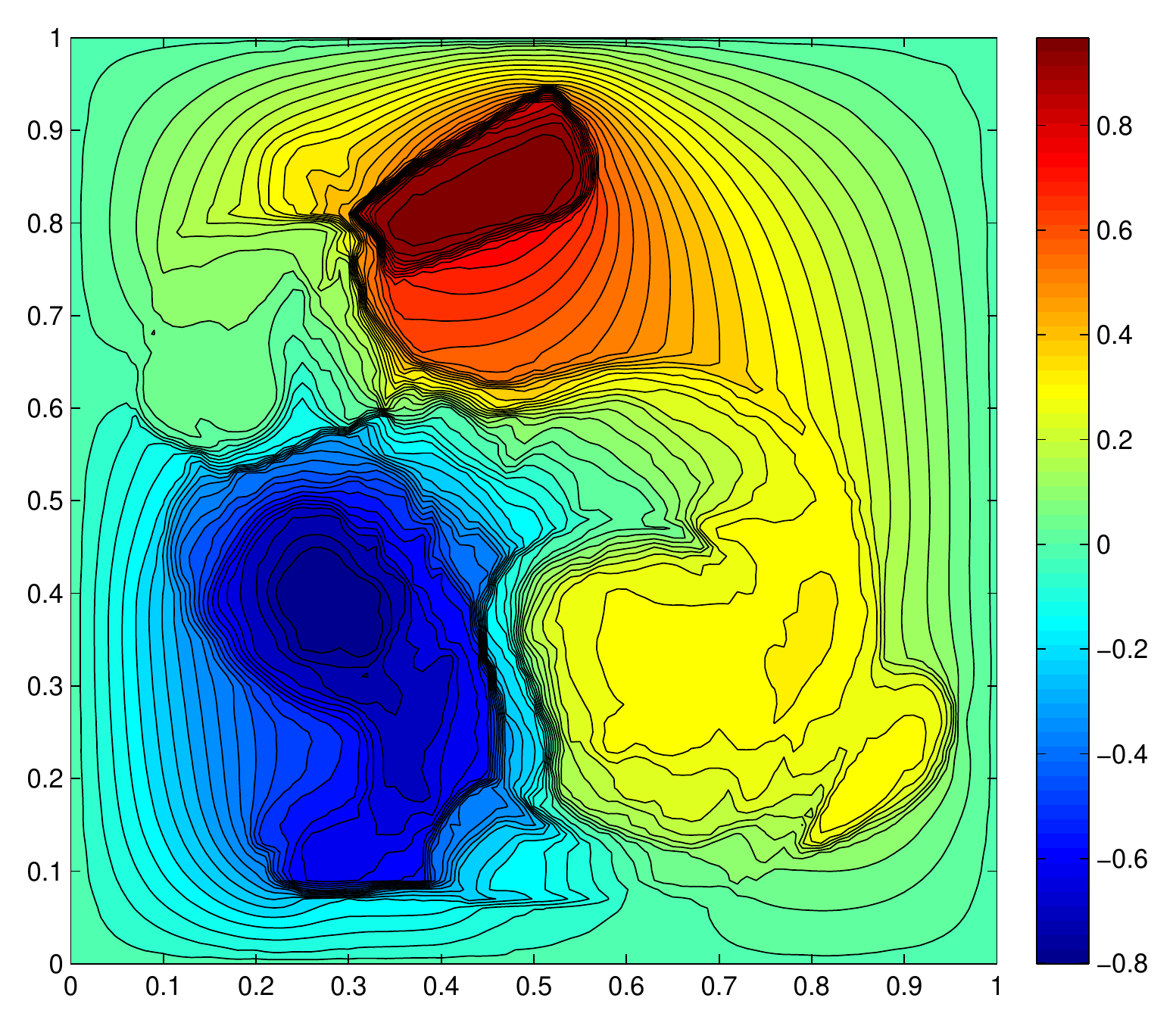}}
\end{minipage}
\begin{minipage}{.5\linewidth}
\centering
\subfloat[]{\label{eigenfun44:d}\includegraphics[scale=.4]{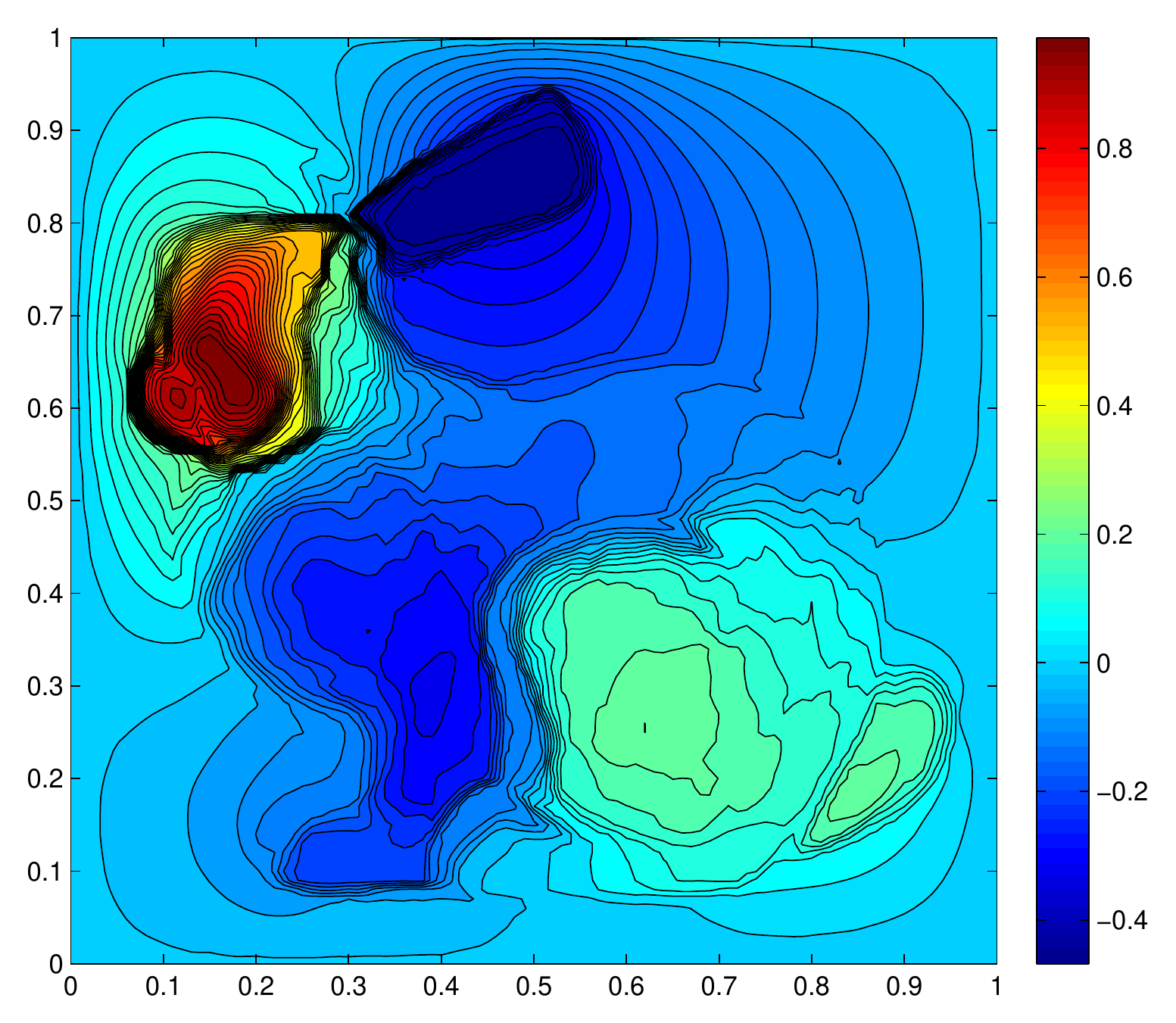}}
\end{minipage}
\caption{The contours of the first four eigenfunctions of the Laplace-Beltrami operator for
the example in \S\ref{exam:bunny}.}
\label{fig:ex44-eigenfun}
\end{figure}

\subsection{Anisotropic eigenvalue problems arising from stability analysis of
nonlinear diffusion equations}
\label{exam:nde}

A number of nonlinear diffusion problems arising in various areas of science and engineering take the form
\begin{equation}
v_t = \nabla \cdot (a(v, \nabla v) \nabla v),
\label{nde-1}
\end{equation}
where $a = a(v, \nabla v)$ is a scalar diffusion coefficient. One example is
the well-known Perona-Malik anisotropic diffusion filter in image processing \cite{PM90} where
the diffusion coefficient is taken as $a(v, \nabla v) = g(|\nabla v|^2)$ with
$g(\cdot)$ being a smooth decreasing function.  A typical choice is
\[
a = \frac{1}{\left (1 + \frac{1}{\gamma^2} \| \nabla v \|_2^2\right )^{\alpha}} ,
\]
where $\alpha$ and $\gamma$ are two positive parameters. 
Another example is the radiation diffusion equation in radiation hydrodynamics \cite{MM84} where
the diffusion coefficient takes the form
\[
a = \frac{1}{\frac{1}{Z^\beta v^{\frac{3}{4}}} + \frac{\| \nabla v\|_2}{v}},
\]
where $Z$ is the atomic number which has different values for different materials and $\beta$ is a parameter.

We are interested in the eigenvalue problem resulting from the linearization of
the nonlinear diffusion equation (\ref{nde-1}) about a solution.
A motivation is the stability consideration of a given solution.
Another motivation is our curiosity to see if the eigenfunctions of the linearized diffusion operator
can be used to detect the features of an image. Denote a solution or an image by $v_0$.
Then, the linearization of (\ref{nde-1}) about $v_0$ reads as
\[
u_t = \nabla \cdot \left ( \left [ a(v_0, \nabla v_0) I
+ \nabla v_0 \frac{\partial a}{\partial \nabla v}(v_0, \nabla v_0) \right ] \nabla u \right )
+ \nabla \cdot \left ( u \frac{\partial a}{\partial v} \nabla v_0\right ) ,
\]
where $u$ denotes the perturbation. The corresponding anisotropic eigenvalue problem is
\begin{equation}
- \nabla \cdot \left ( \left [ a(v_0, \nabla v_0) I
+ \nabla v_0 \frac{\partial a}{\partial \nabla v}(v_0, \nabla v_0) \right ] \nabla u \right )
- \nabla \cdot \left ( u \frac{\partial a}{\partial v} \nabla v_0\right ) = \lambda u .
\label{nde-2}
\end{equation}

We consider the Perona-Malik filter with $\alpha = 0.5$ and $\gamma = 1$. This form of Perona-Malik
filer can also be seen as a regularized version of the total variation flow. In this case,
the existence and convergence of a finite element solution to (\ref{nde-1}) have been proven by
Feng and Prohl \cite{Feng-Prohl-03}.
The eigenvalue problem (\ref{nde-2}) can be cast in the form (\ref{eigen-1}), with the diffusion matrix given by
\begin{equation}
\bD = \frac{1}{(1+\|\nabla v_0 \|_2^2)^{\frac{3}{2}}} \left ( \begin{array}{rr}
1 + (\frac{\partial v_0}{\partial y})^2 & - \frac{\partial v_0}{\partial x} \frac{\partial v_0}{\partial y} \\
- \frac{\partial v_0}{\partial x} \frac{\partial v_0}{\partial y}& 1 + (\frac{\partial v_0}{\partial x})^2 \end{array} \right ),
\quad \rho = 1.
\label{nde-3}
\end{equation}
This problem is similar to the Laplace-Beltrami operator on parametrized surfaces considered in
the previous subsection (cf. (\ref{LB-2})) but differs in $\rho$ and the power of $(1+\|\nabla v_0 \|_2^2)$
(or $(1+\|\nabla \psi \|_2^2$)) in the definition of $\bD$.
In our computation, we consider the initial moment and take $v_0 = \psi(x,y)$, where $\psi(x,y)$ is the surface
defined by the gray level image of the Stanford bunny shown in Fig.~\ref{stanford-bunny-f1:a}.

It is well known that the Perona-Malik model yields enhanced edges while reducing noise
\cite{ALM92,CLMC92,PM90}. The anisotropic eigenvalue problem (\ref{nde-2})
also has this feature; see Figs.~\ref{fig:ex41-eigenfun}, \ref{fig:mesh41}, and \ref{fig:ex41comp}
which show the contours of the first four eigenfunctions, adaptive meshes, and the surface plot and
the re-interpolated image of the first eigenfunction, respectively.
Particularly, we can see in Fig.~\ref{fig:ex41-eigenfun} that the contour lines of the eigenfunctions 
are compressed near the edges and there are fewer contour lines in other regions.
This indicates that the eigenfunctions are close to being piecewise constant (also see Fig.\ref{fig:ex41comp}).
The generated anisotropic meshes also reflect this feature. A close-up 
view near point $(0.6, 0.6)$ in Fig.~\ref{mesh41:d} shows that the mesh elements have a very strong orientation, 
almost ``flat'' and aligned with the edges. The concentration of the mesh elements near the edges
is useful for locating the edges accurately.
For comparison, we also include results obtained with an isotropic adaptive mesh in
Figs.~\ref{fig:mesh41} and \ref{fig:ex41comp}. The advantages of using anisotropic meshes are clear.
The convergence history for the error in eigenvalues is similar to that for the example in \S\ref{exam:bunny}.

\begin{figure}
\begin{minipage}{.5\linewidth}
\centering
\subfloat[]{\label{eigenfun41:a}\includegraphics[scale=.4]{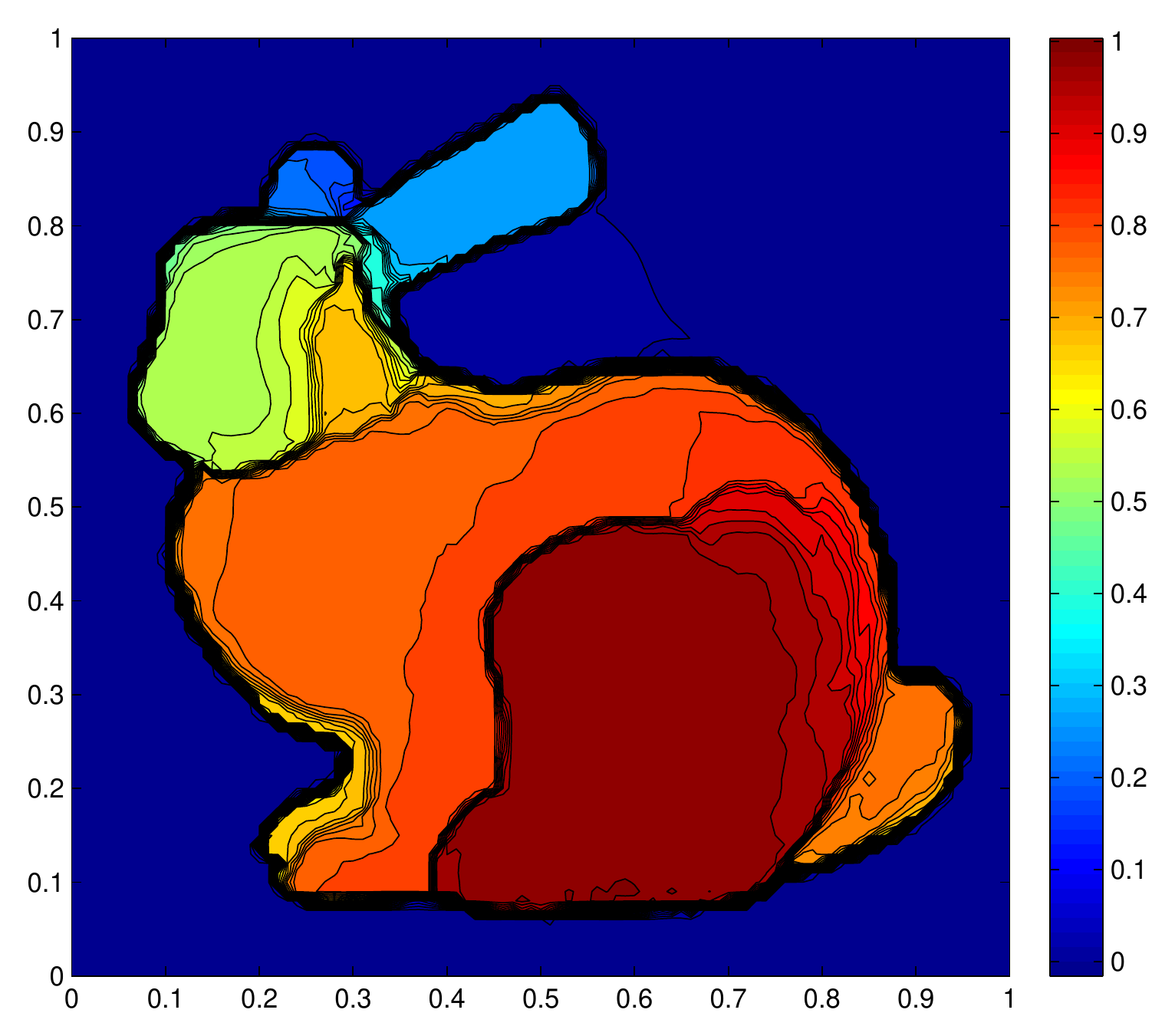}}
\end{minipage}%
\begin{minipage}{.5\linewidth}
\centering
\subfloat[]{\label{eigenfun41:b}\includegraphics[scale=.4]{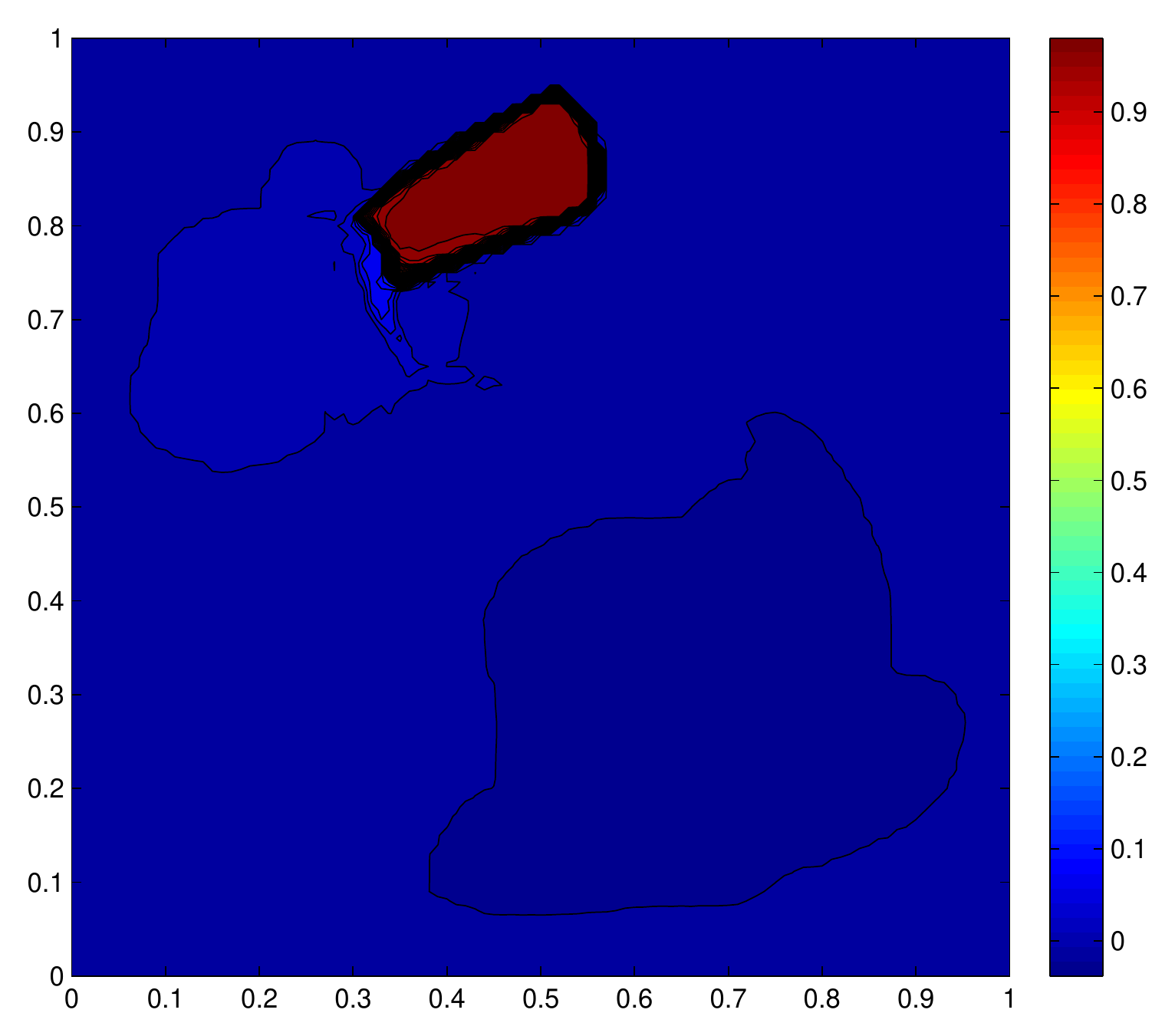}}
\end{minipage}\par\medskip
\begin{minipage}{.5\linewidth}
\centering
\subfloat[]{\label{eigenfun41:c}\includegraphics[scale=.4]{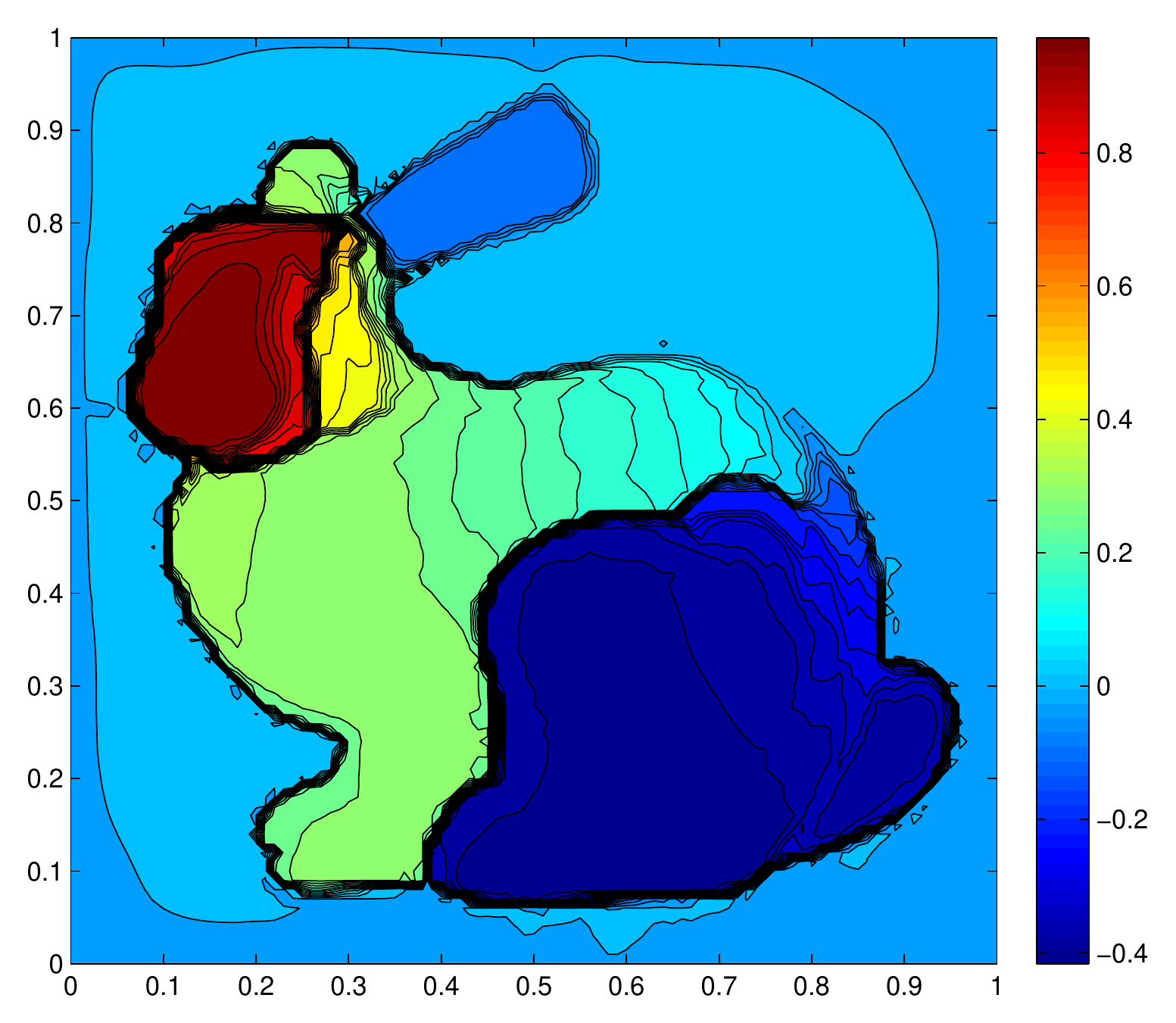}}
\end{minipage}
\begin{minipage}{.5\linewidth}
\centering
\subfloat[]{\label{eigenfun41:d}\includegraphics[scale=.4]{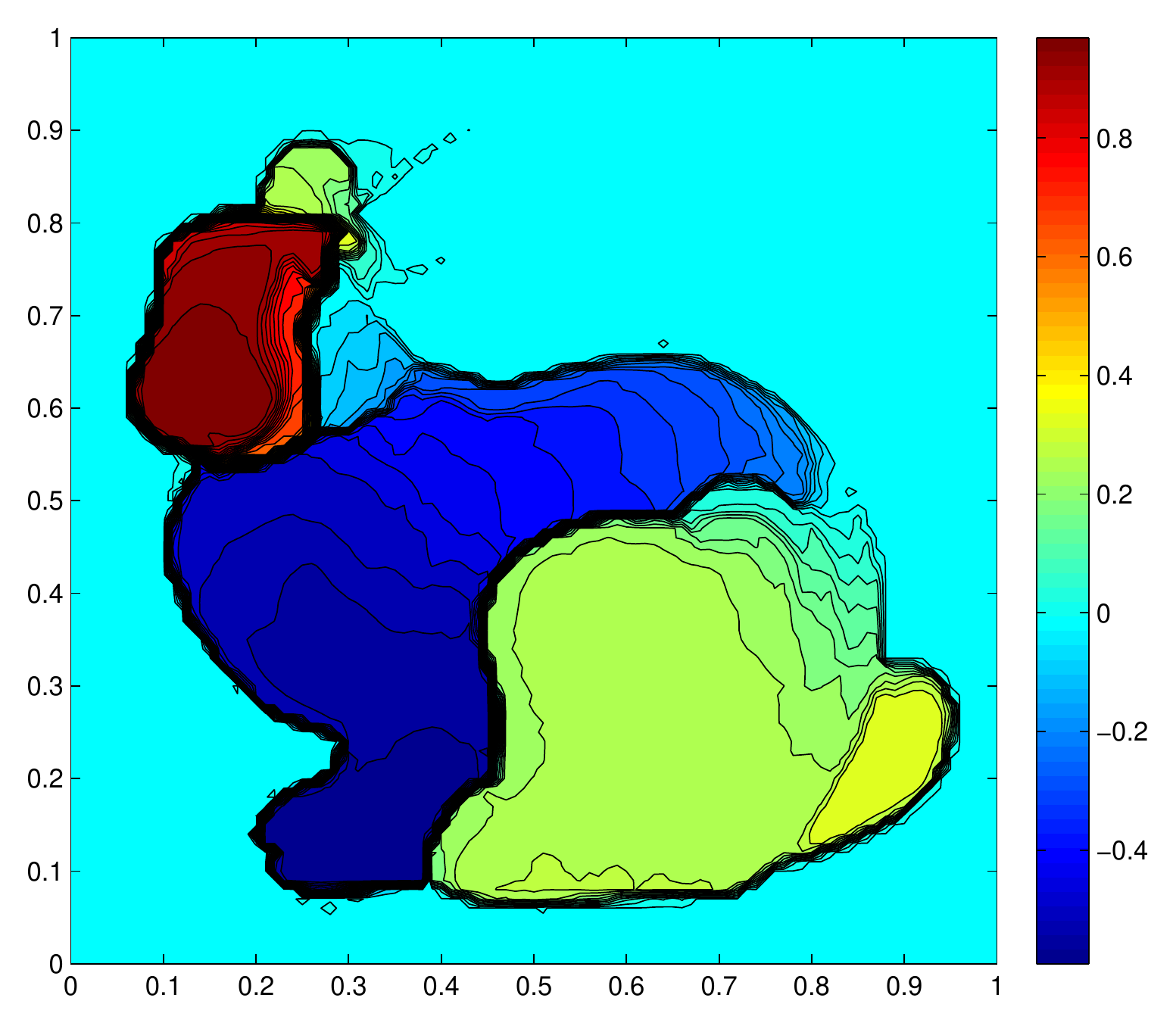}}
\end{minipage}
\caption{The contours of the first four eigenfunctions of the anisotropic eigenvalue problem (\ref{nde-2})
for the example in \S\ref{exam:nde}.}
\label{fig:ex41-eigenfun}
\end{figure}

\begin{figure}
\begin{minipage}{.5\linewidth}
\centering
\subfloat[]{\label{mesh41:a}\includegraphics[scale=.8]{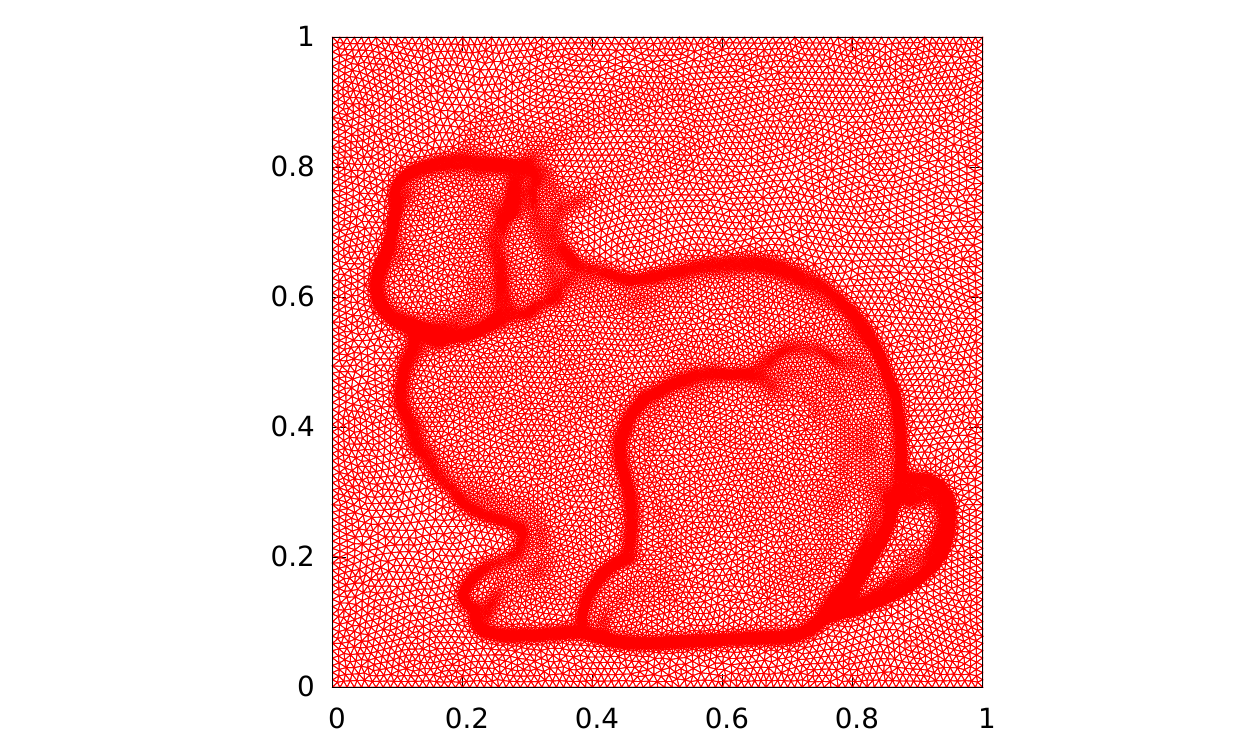}}
\end{minipage}%
\begin{minipage}{.5\linewidth}
\centering
\subfloat[]{\label{mesh41:b}\includegraphics[scale=.8]{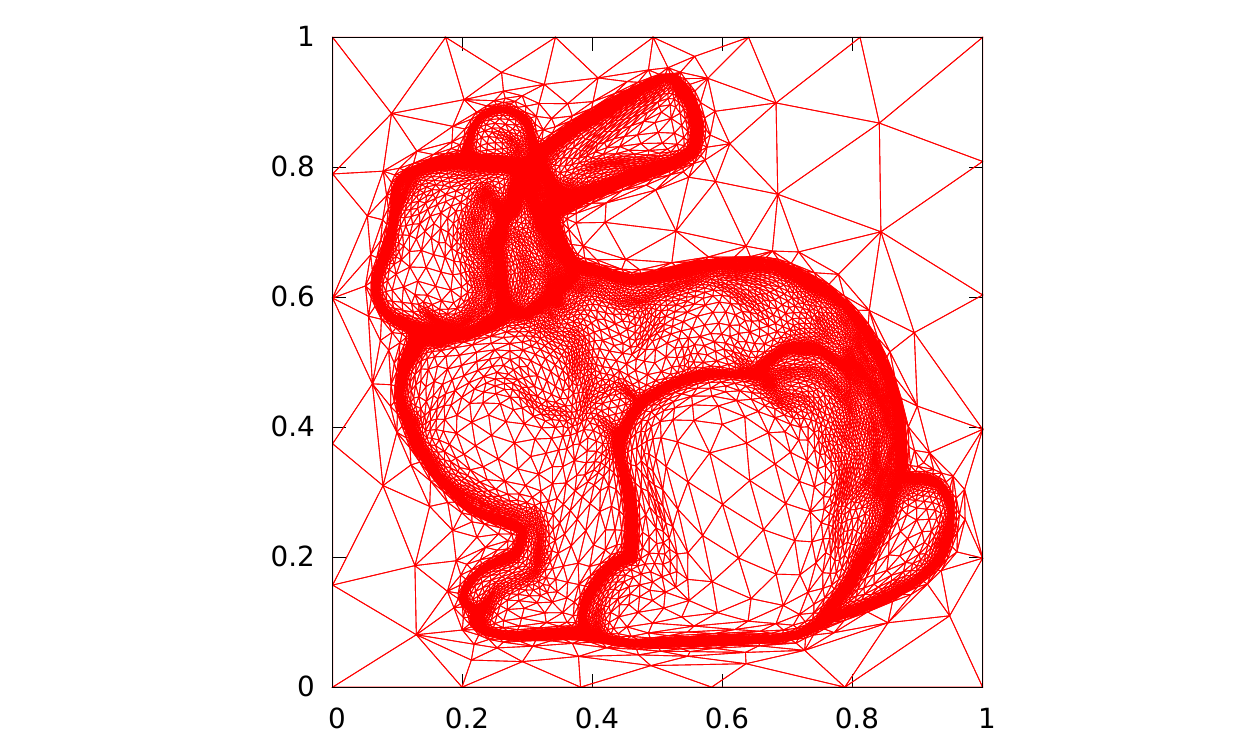}}
\end{minipage}\par\medskip
\begin{minipage}{.5\linewidth}
\centering
\subfloat[]{\label{mesh41:c}\includegraphics[scale=.8]{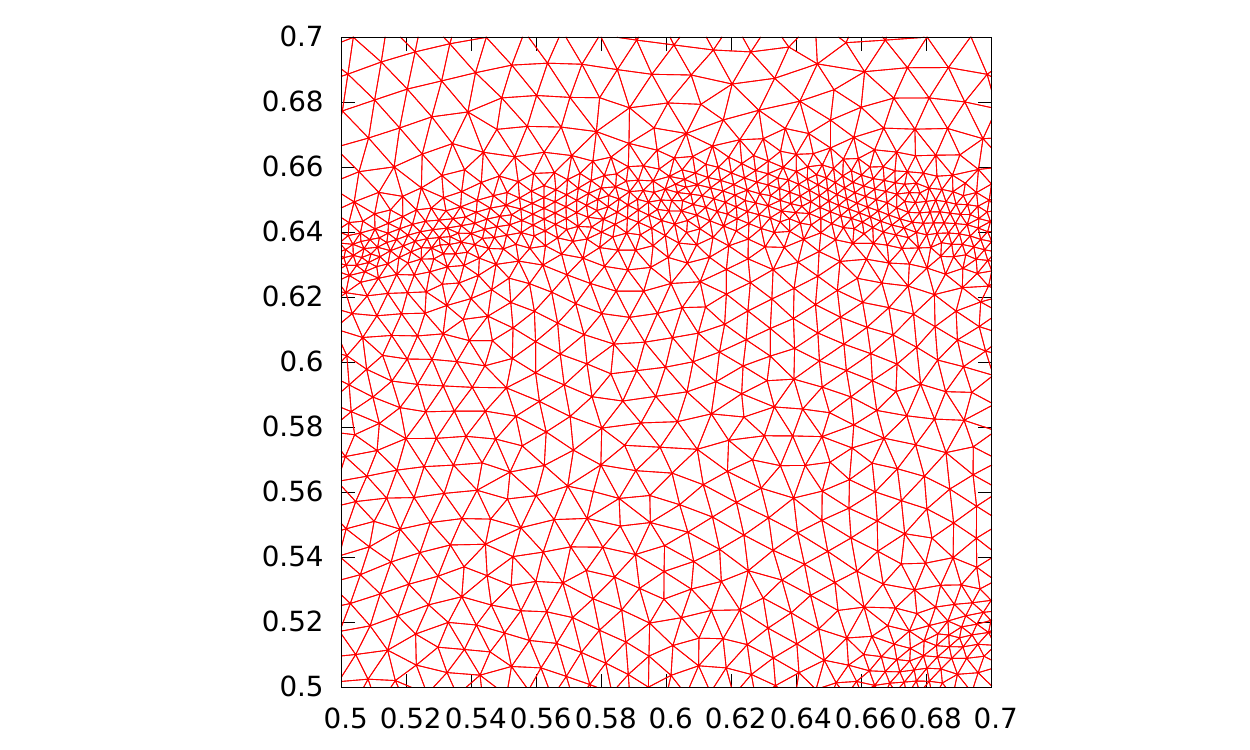}}
\end{minipage}
\begin{minipage}{.5\linewidth}
\centering
\subfloat[]{\label{mesh41:d}\includegraphics[scale=.8]{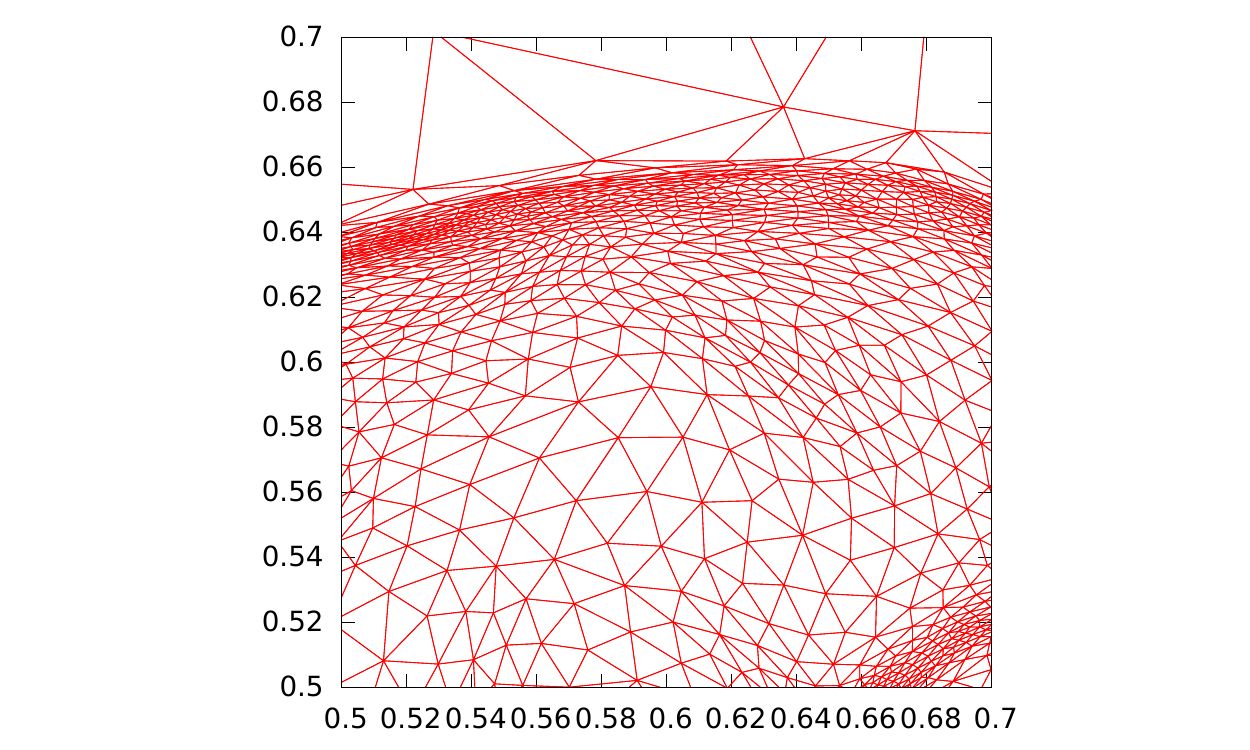}}
\end{minipage}
\caption{Typical adaptive meshes and close-up view near $(0.6, 0.6)$ for the example in \S\ref{exam:nde}.
(a) and (c) are for an isotropic mesh with 52,336 triangular elements and (b) and (d) are for an anisotropic
adaptive mesh with 52,583 triangular elements.}
\label{fig:mesh41}
\end{figure}

\begin{figure}
\begin{minipage}{.5\linewidth}
\centering
\subfloat[]{\label{ex41comp:a}\includegraphics[scale=.4]{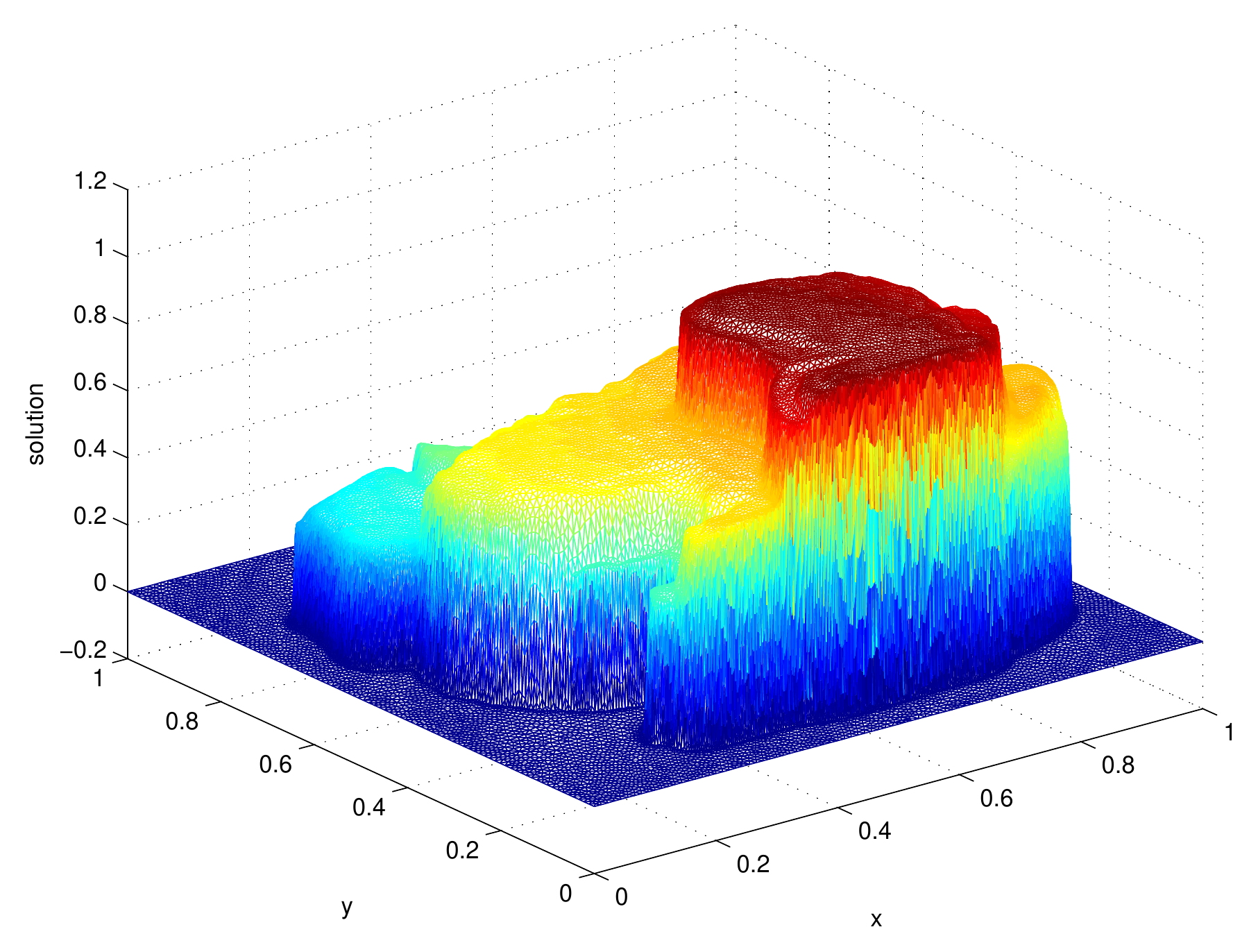}}
\end{minipage}%
\begin{minipage}{.5\linewidth}
\centering
\subfloat[]{\label{ex41comp:b}\includegraphics[scale=.4]{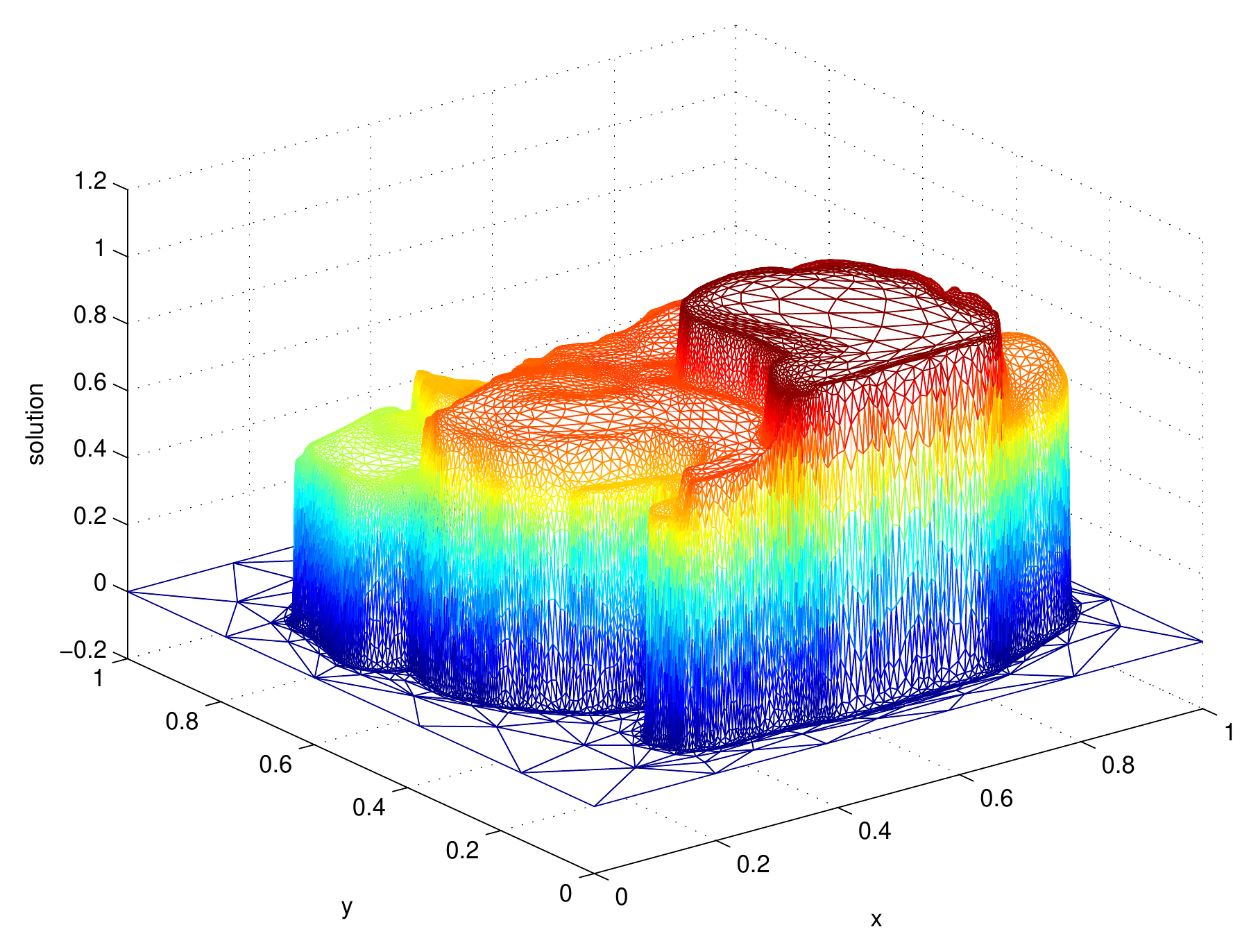}}
\end{minipage}\par\medskip
\begin{minipage}{.5\linewidth}
\centering
\subfloat[]{\label{ex41comp:c}\includegraphics[scale=.6]{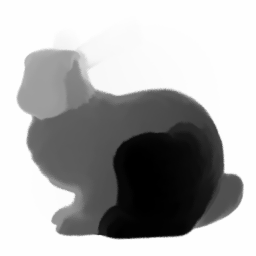}}
\end{minipage}%
\begin{minipage}{.5\linewidth}
\centering
\subfloat[]{\label{ex41comp:d}\includegraphics[scale=.6]{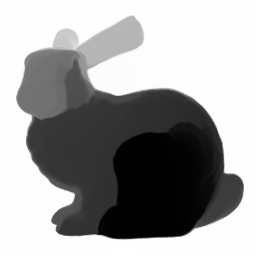}}
\end{minipage}
\caption{For the example in \S\ref{exam:nde}, the surface of the first eigenfunction and the corresponding re-interpolated
image are plotted. (a) and (c) are for the isotropic adaptive mesh shown in Fig.~\ref{mesh41:a} and
(b) and (d) are for the anisotropic adaptive mesh shown in Fig.~\ref{mesh41:b}.}
\label{fig:ex41comp}
\end{figure}

\subsection{Thermal diffusion in a magnetic field in plasma physics}
\label{exam:ring-test}

This example is motivated from the ring diffusion test (e.g., see Parrish and Stone \cite{PS05}
and Sharma and Hammett \cite{SH07}) for thermal diffusion in a magnetic field in plasma physics.
The problem is in the form (\ref{eigen-1})  with $\rho = 1$ and the diffusion matrix
\[
\mathbb{D}=\chi_{\parallel}\V{b}\V{b}^{t}+\chi_{\perp}(I-\V{b}\V{b}^{t}),
\]
where $\V{b}=\V{e}_{z}\times\nabla\psi/\| \V{e}_{z}\times\nabla\psi\|_2$ is the unit direction along the magnetic field line,
$\psi$ is the magnetic potential, and $\chi_{\parallel}$ and $\chi_{\perp}$ are the coefficients
of parallel and perpendicular conduction with respect to $\V{b}$.
In our computation, we choose $\chi_{\parallel}=10^{3}$, $\chi_{\perp}=1$, and a circular magnetic field as
\[
\psi=\sqrt{x^{2}+ y^{2}},\quad (x, y) \in \Omega = (-1, 1)\times (-1, 1) .
\]

A typical anisotropic adaptive mesh for this example is shown in Fig.~\ref{fig:mesh30}. One can see
that most elements are isotropic except in the regions near the unit circle. 
The relative error in the first four eigenvalues is plotted in Fig.~\ref{fig:error30} as a function
of the number of elements. The results show that although this problem does not have
very strong anisotropic features, the improvements in accuracy using anisotropic meshes over
isotropic or uniform ones are still significant.
We also plot the contour lines of the first four eigenfunctions in Fig.~\ref{ex4.3-cont}.

\begin{figure}
\centering
\includegraphics[scale=.9]{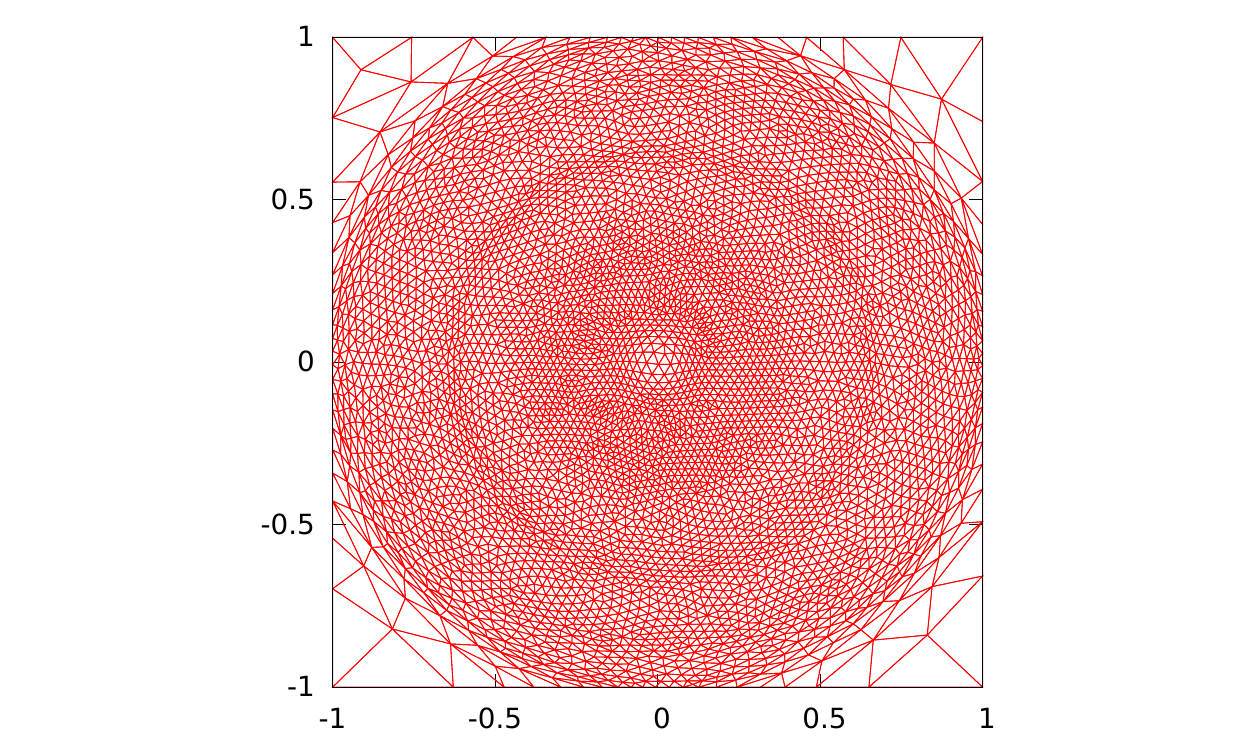}
\caption{A typical adaptive mesh obtained for the example in \S\ref{exam:ring-test}.}
\label{fig:mesh30}
\end{figure}

\begin{figure}
\begin{minipage}{.5\linewidth}
\centering
\subfloat[]{\label{error30:a}\includegraphics[scale=.9]{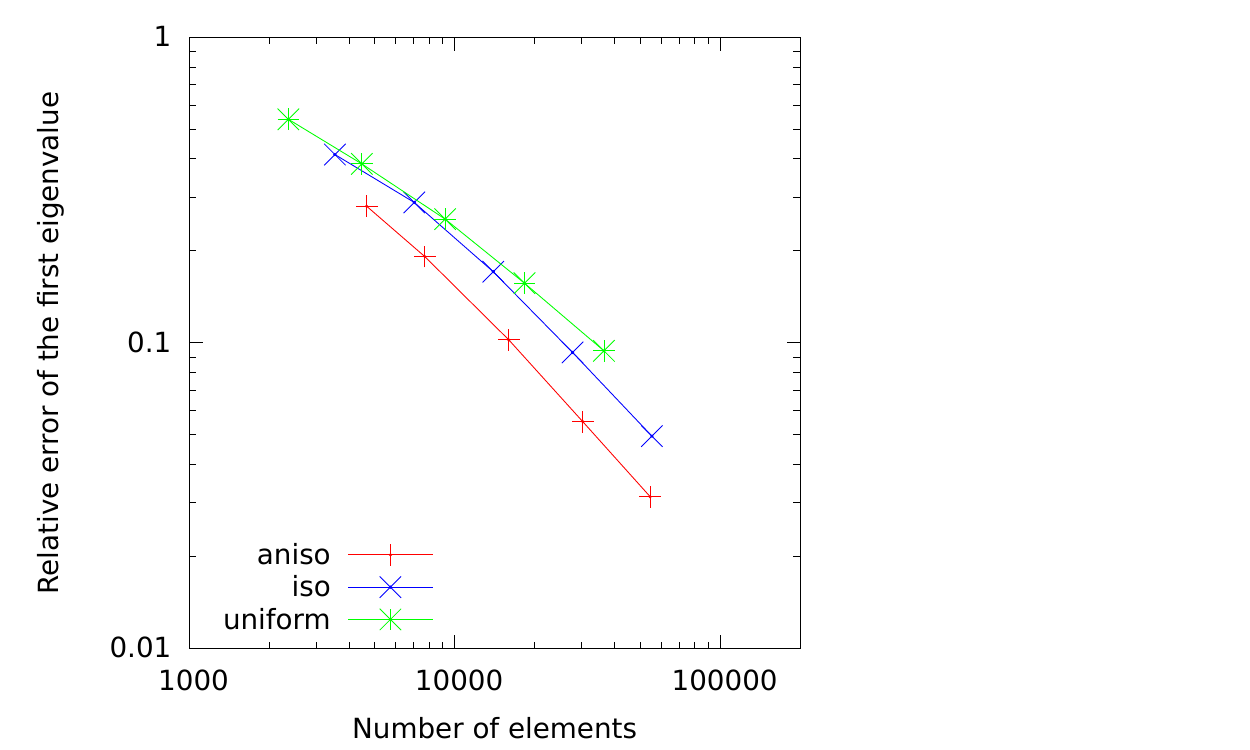}}
\end{minipage}%
\begin{minipage}{.5\linewidth}
\centering
\subfloat[]{\label{error30:b}\includegraphics[scale=.9]{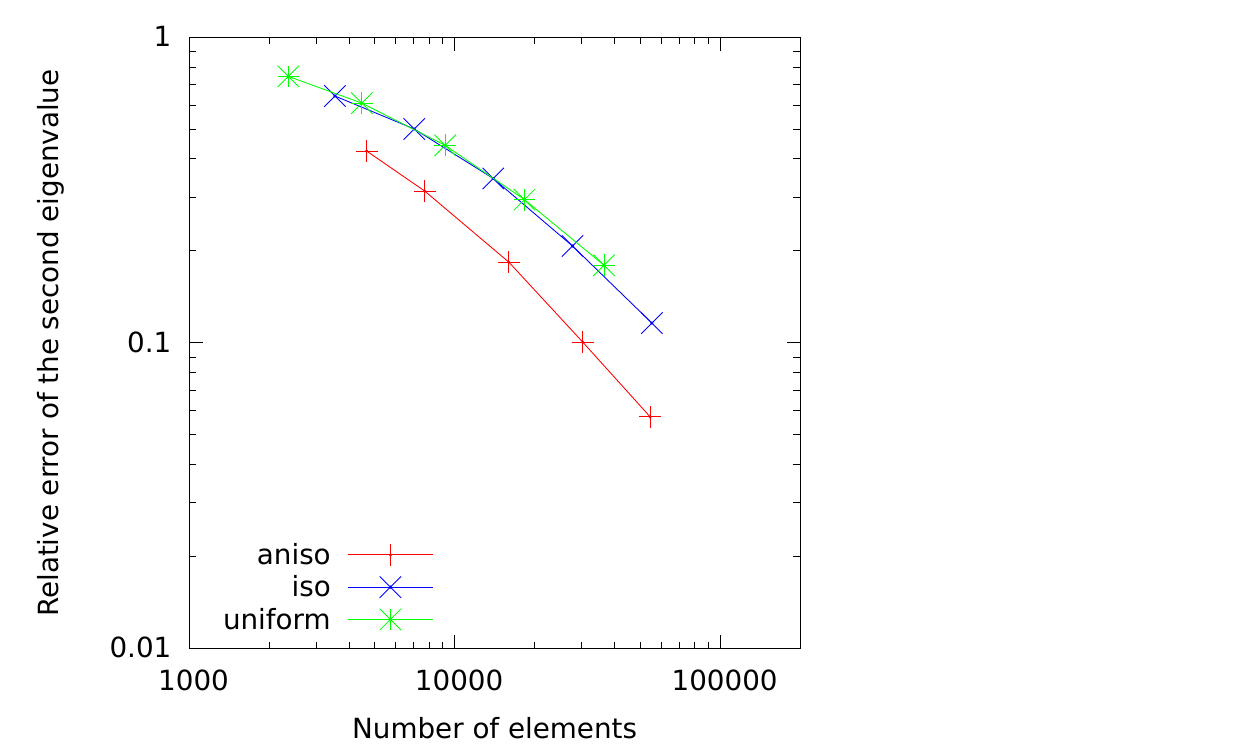}}
\end{minipage}\par\medskip
\begin{minipage}{.5\linewidth}
\centering
\subfloat[]{\label{error30:c}\includegraphics[scale=.9]{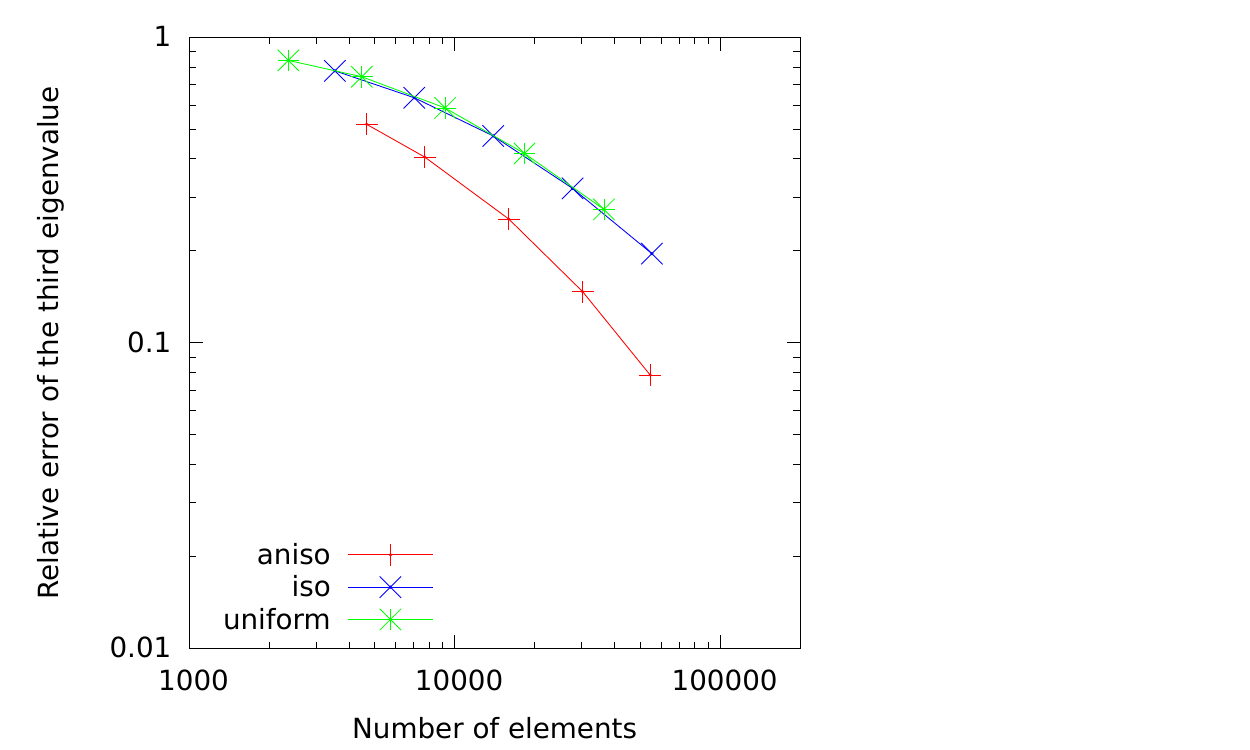}}
\end{minipage}
\begin{minipage}{.5\linewidth}
\centering
\subfloat[]{\label{error30:d}\includegraphics[scale=.9]{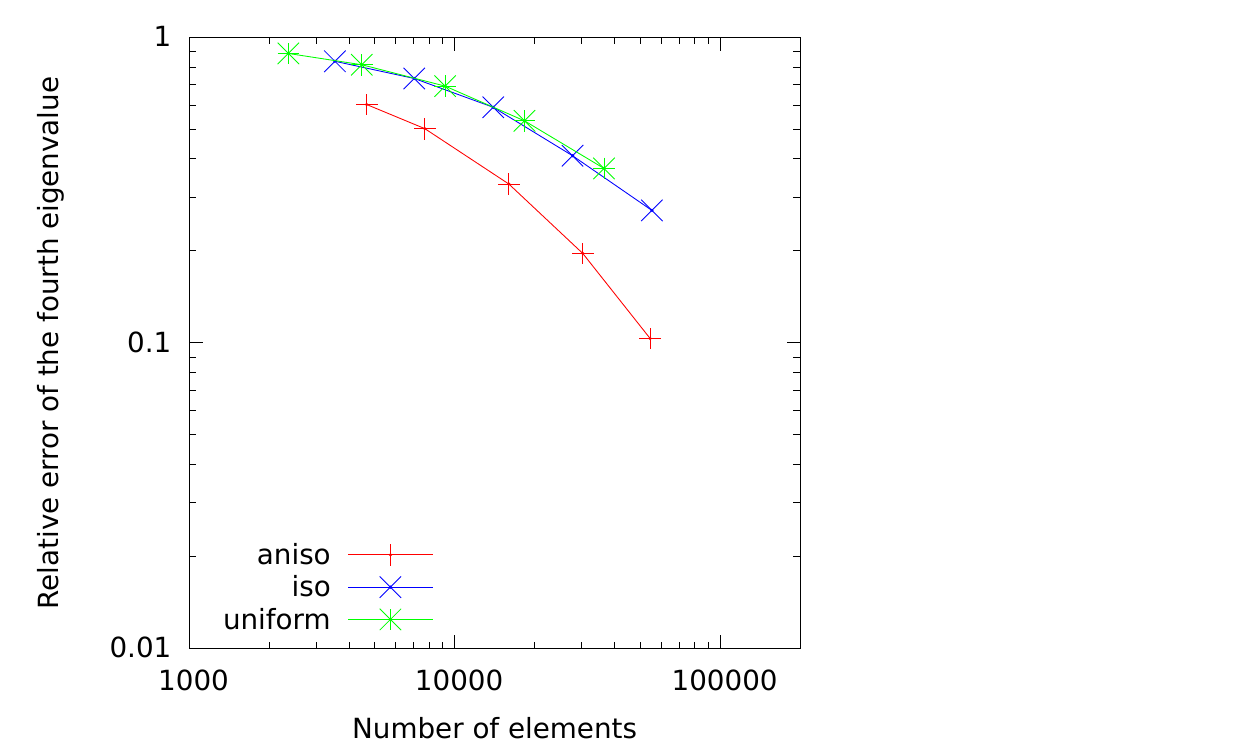}}
\end{minipage}
\caption{For the example in \S\ref{exam:ring-test}, the relative error in the first four eigenvalues is plotted
as a function of the number of mesh elements.}
\label{fig:error30}
\end{figure}

\begin{figure}
\begin{minipage}{.5\linewidth}
\centering
\subfloat[]{\label{cont43:a}\includegraphics[scale=.4]{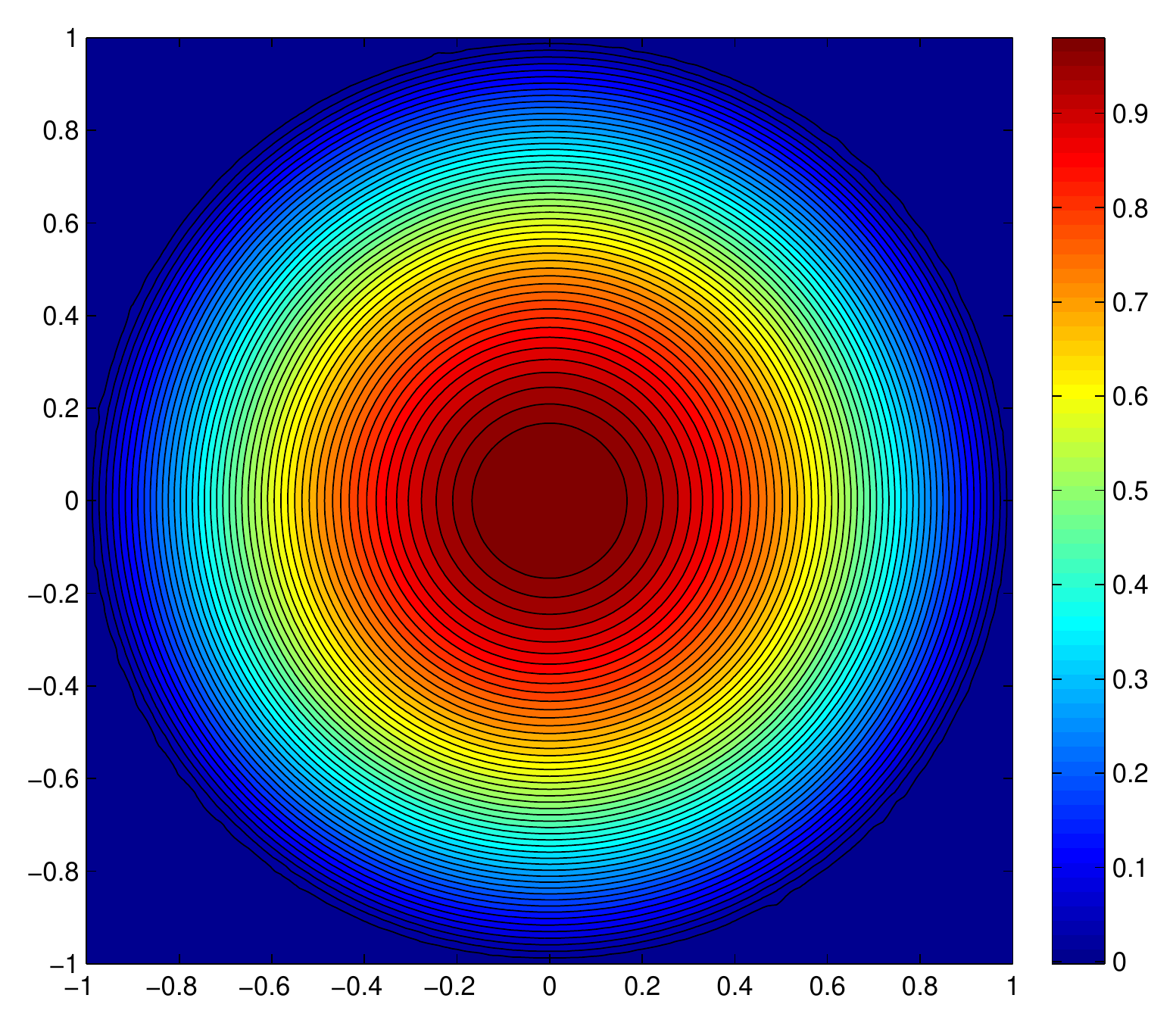}}
\end{minipage}%
\begin{minipage}{.5\linewidth}
\centering
\subfloat[]{\label{cont43:b}\includegraphics[scale=.4]{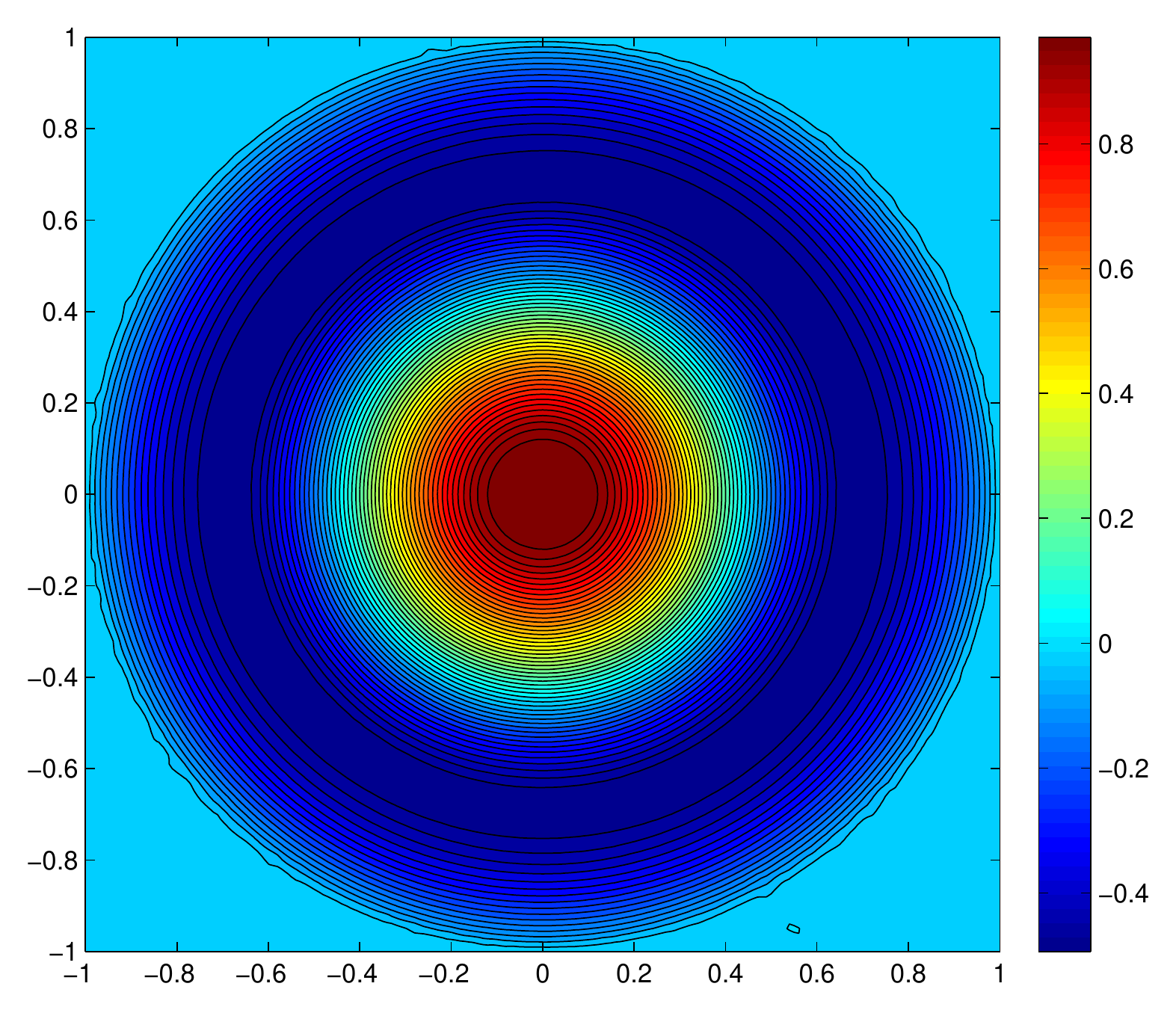}}
\end{minipage}\par\medskip
\begin{minipage}{.5\linewidth}
\centering
\subfloat[]{\label{cont43:c}\includegraphics[scale=.4]{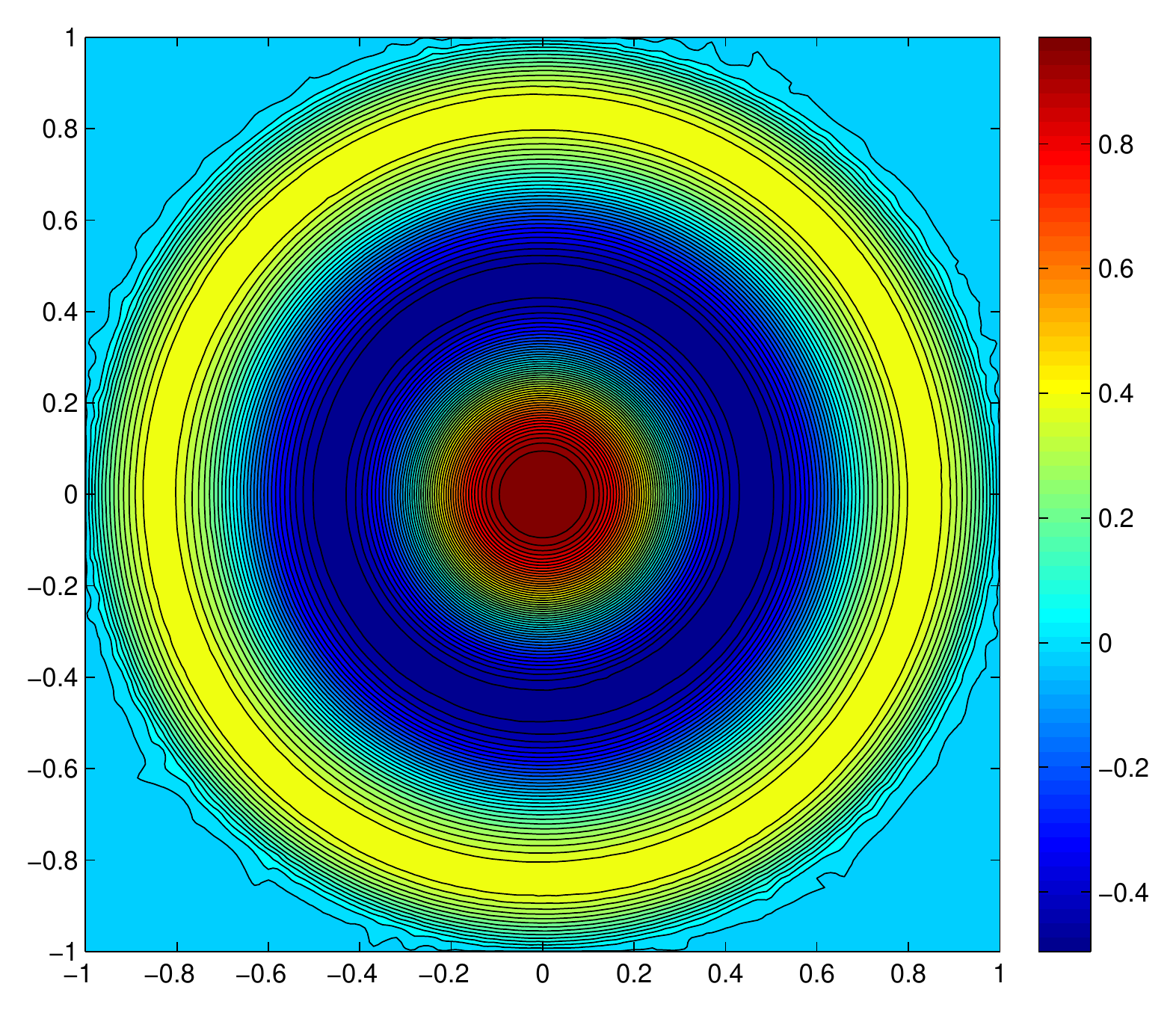}}
\end{minipage}
\begin{minipage}{.5\linewidth}
\centering
\subfloat[]{\label{cont43:d}\includegraphics[scale=.4]{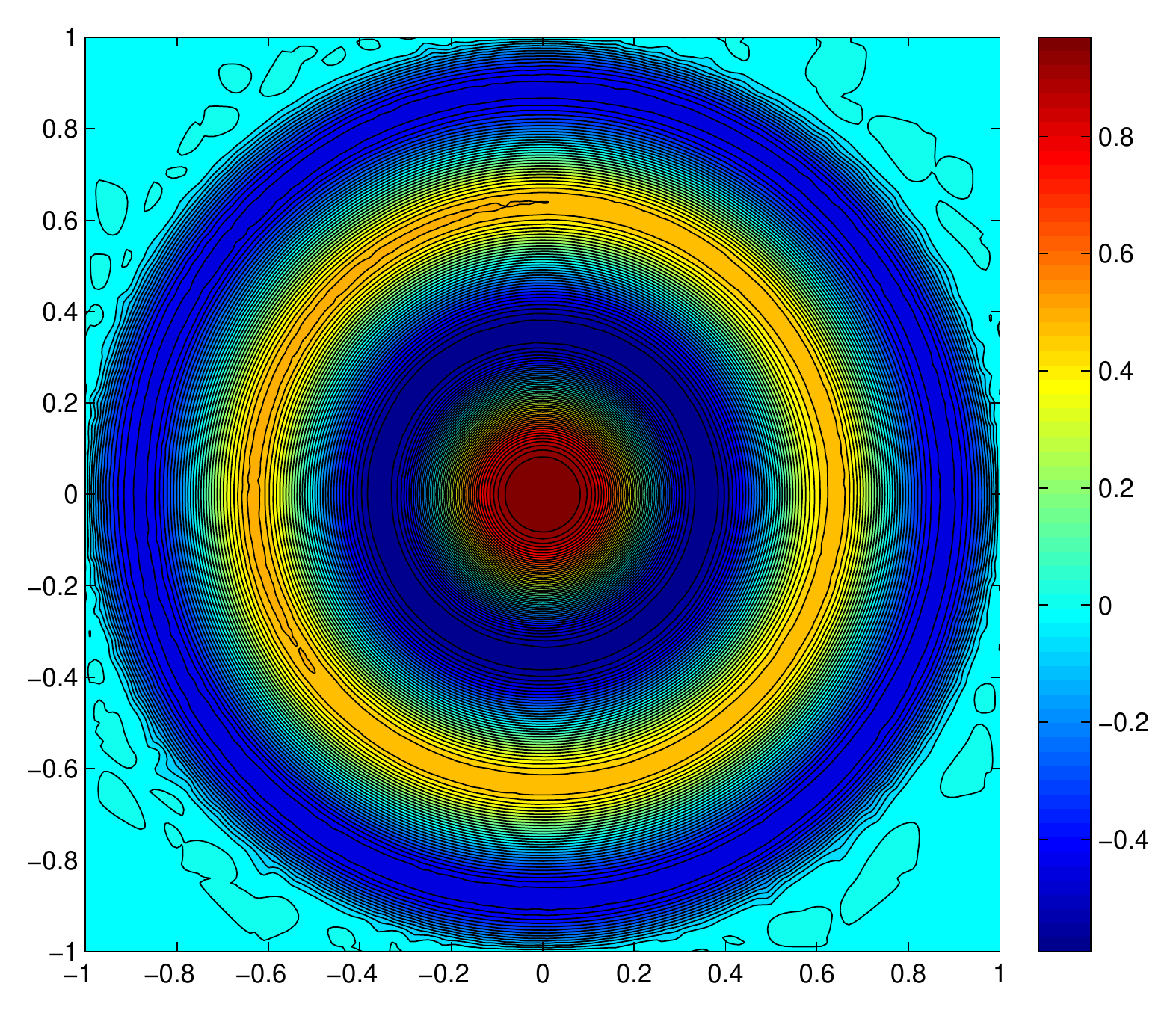}}
\end{minipage}
\caption{For the example in \S\ref{exam:ring-test}, contour lines of the first four eigenfunctions.}
\label{ex4.3-cont}
\end{figure}

\subsection{An eigenvalue problem on an $L$-shape domain}
\label{exam:L-shape}

The last example is in the form of (\ref{eigen-1}) with $\bD = I$, $\rho = 1$, and 
$\Omega$ being an L-shaped domain. This example does not exhibit strong anisotropic behavior
and there is no need for anisotropic mesh adaptation. This example is selected
because the first eigenfunction has singularity at the the re-entrant corner which requires
mesh adaptation and this is a good example to demonstrate that
the metric tensor (\ref{M-1}) will lead to isotropic adaptive meshes for problems
having only isotropic features.

The numerical results are shown in Figs.~\ref{fig:error08} and \ref{fig:mesh08}.
We can see that the mesh generated with (\ref{M-1}) is isotropic, with elements concentrated
near the re-entrant corner. The convergence history for the error in the second to fourth
eigenvalues is almost the same for uniform, isotropic (generated with (\ref{M-3})),
and anisotropic (generated with (\ref{M-1})) meshes.
However, for the first eigenvalue, for which the corresponding eigenfunction
has singularity near the re-entrant corner, the error converges at a slower rate
with uniform meshes than with adaptive meshes.

\begin{figure}
\begin{minipage}{.5\linewidth}
\centering
\subfloat[]{\label{error08:a}\includegraphics[scale=.9]{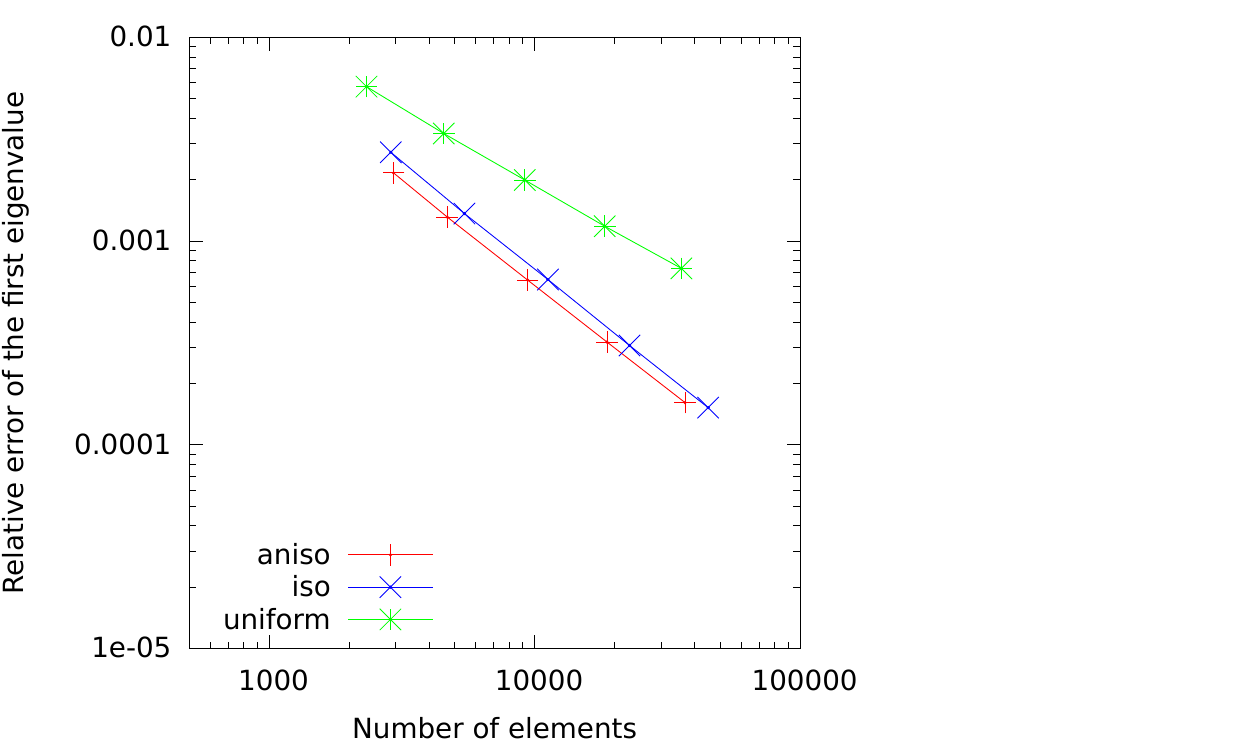}}
\end{minipage}%
\begin{minipage}{.5\linewidth}
\centering
\subfloat[]{\label{error08:b}\includegraphics[scale=.9]{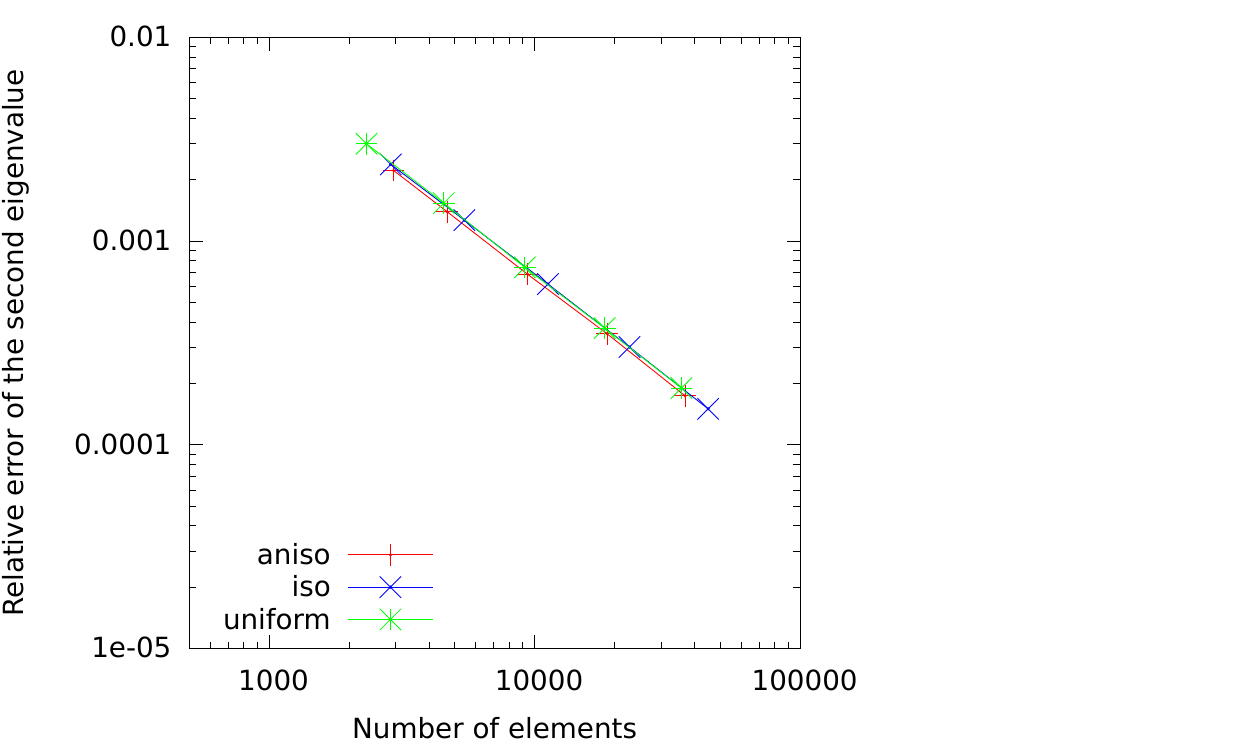}}
\end{minipage}\par\medskip
\begin{minipage}{.5\linewidth}
\centering
\subfloat[]{\label{error08:c}\includegraphics[scale=.9]{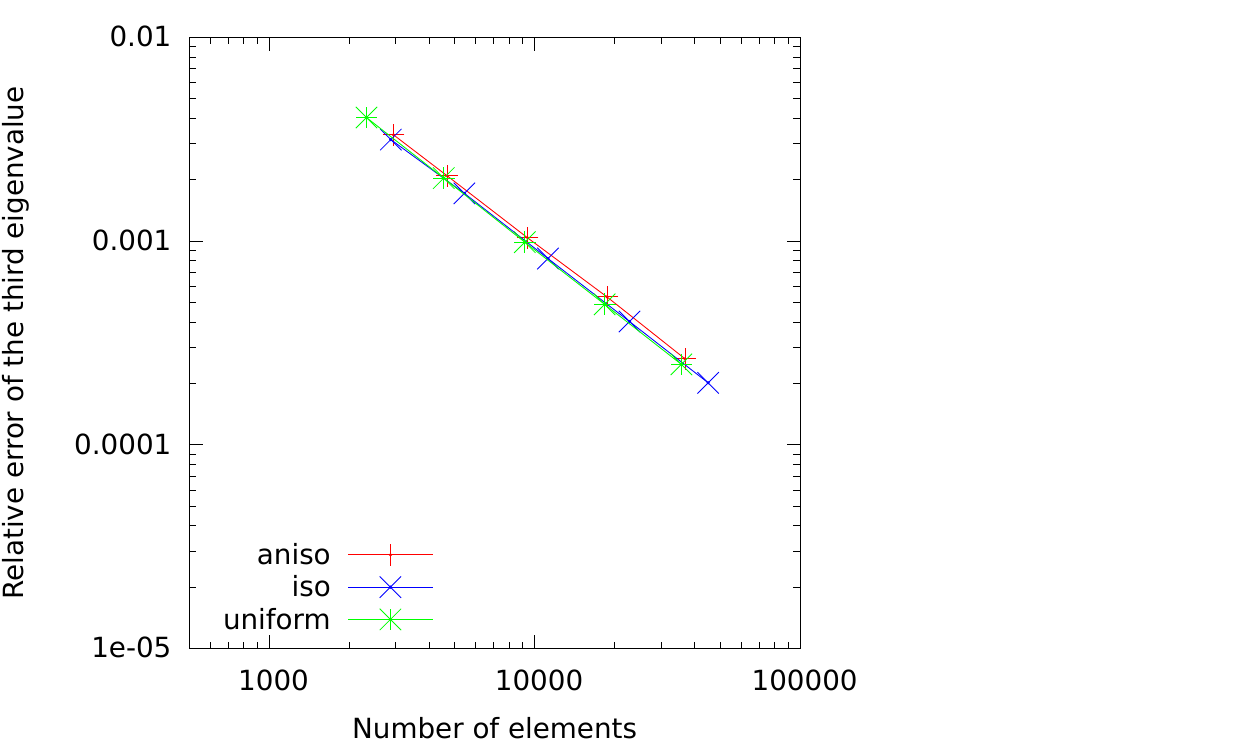}}
\end{minipage}
\begin{minipage}{.5\linewidth}
\centering
\subfloat[]{\label{error08:d}\includegraphics[scale=.9]{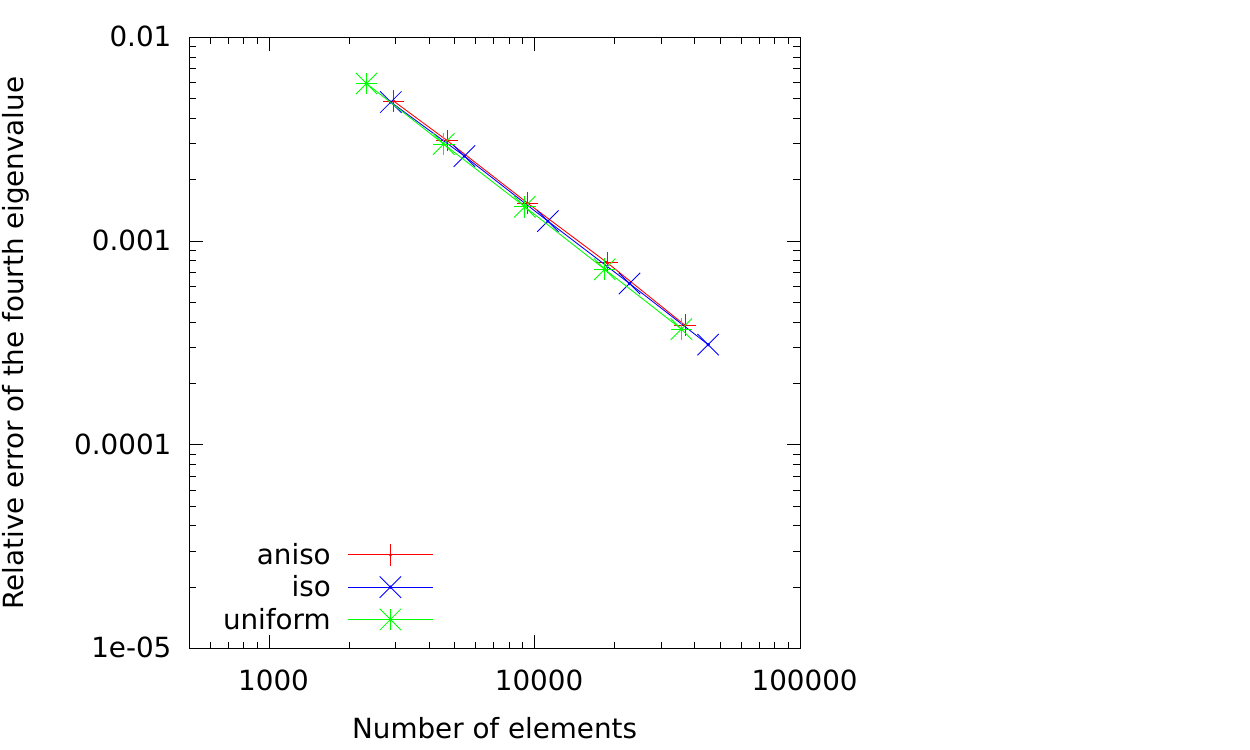}}
\end{minipage}
\caption{For the example in \S\ref{exam:L-shape}, the relative error in the first four eigenvalues
is plotted as a function of the number of mesh elements.}
\label{fig:error08}
\end{figure}

\begin{figure}
\begin{minipage}{.5\linewidth}
\centering
\subfloat[]{\label{mesh08:a}\includegraphics[scale=.9]{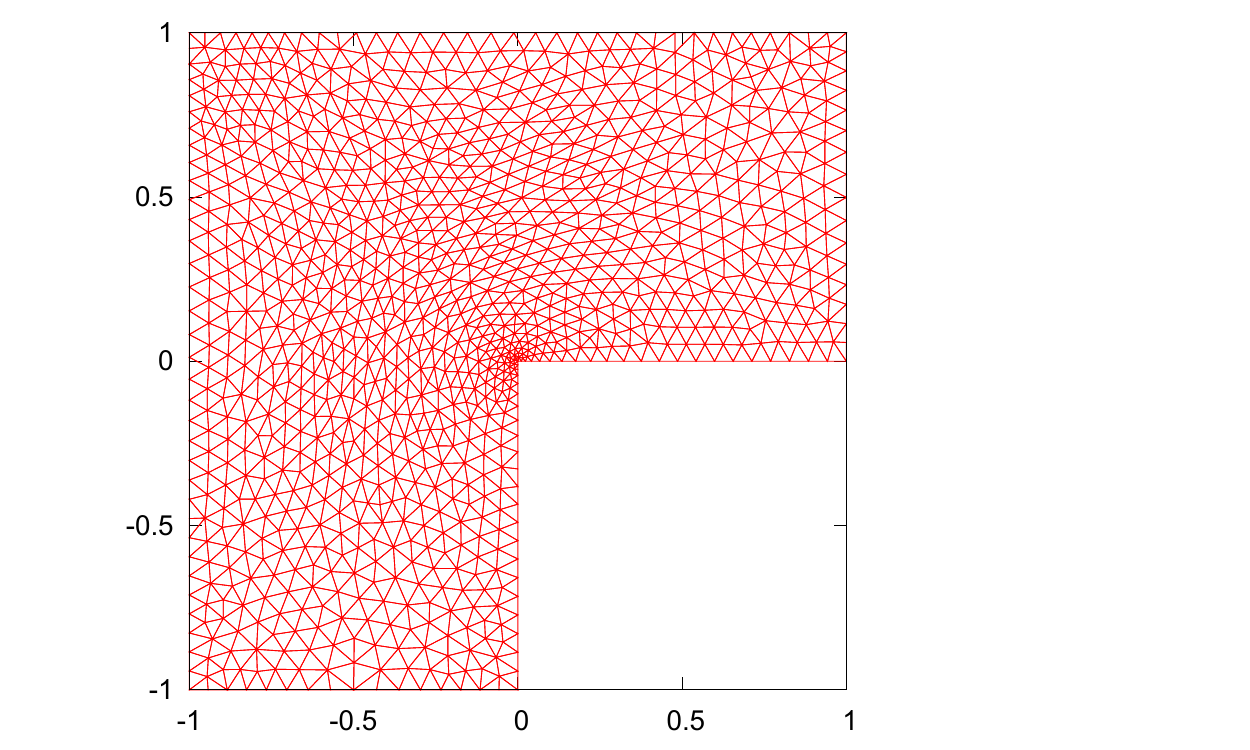}}
\end{minipage}%
\begin{minipage}{.5\linewidth}
\centering
\subfloat[]{\label{mesh08:b}\includegraphics[scale=.9]{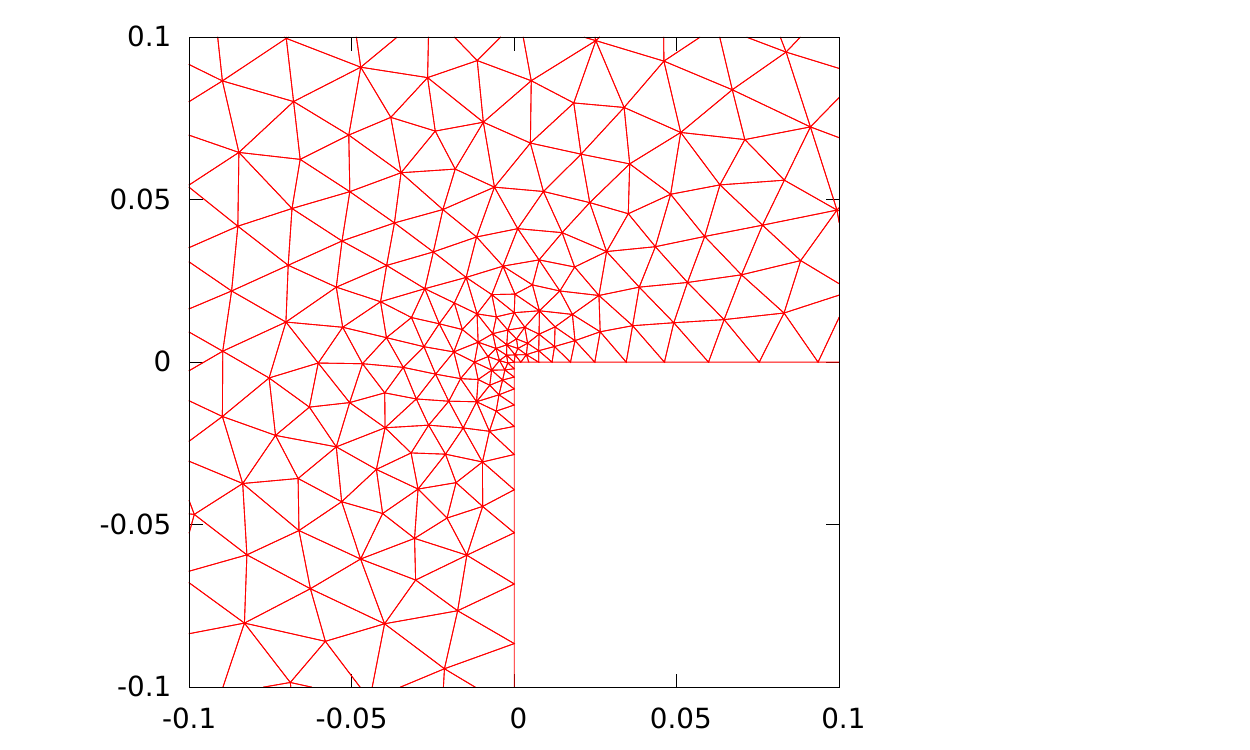}}
\end{minipage}\par\medskip
\caption{For the example in \S\ref{exam:L-shape}, a typical adaptive mesh and its close-up view near
the re-entrant corner are obtained with the metric tensor (\ref{M-1}).}
\label{fig:mesh08}
\end{figure}

\section{Effects of approximation of curved boundaries on eigenvalue computation}
\label{SEC:boundary}

In the previous sections we have assumed that $\Omega$ is a polygonal or polyhedral domain.
We now consider the case of a domain with curved boundaries and study the effects of boundary approximation
on eigenvalue computation. To be specific, we only consider the 2D situation.
Discussion for higher dimensional cases is similar.

Let $\Omega$ be a bounded domain with a piecewise smooth (more precisely, H\"{o}lder class $C^{0,1}$)
boundary $\partial \Omega$. We consider the approximation of $\Omega$ by a polygonal domain
$\Omega_h$. We assume that $\partial \Omega_h$ is made up by a sequence of points $\{ P_i, i = 0, ..., N_b\}$
that satisfies (a) $P_0 = P_{N_b}$; (b) $d(P_i, P_{i+1}) \le h_b$, $i = 0, ..., N_b-1$, where $d(P_i, P_{i+1})$
is the distance between $P_i$ and $P_{i+1}$; and (c) $\{ P_i \}$ includes all of the corner points of $\partial \Omega$.
It can be shown that there exists a constant $\gamma > 0$ such that
\[
\max\limits_{P \in \partial \Omega_h} \min\limits_{Q \in \partial \Omega} d (P, Q) \le \gamma h_b^2 . 
\]

The effects of boundary approximation on the numerical solution of BVP of elliptic operators
have been studied extensively in the past. For example,  Strang and Berger \cite{Strang-Berger1973}
show that if $\Omega$ is convex and the solution ($u$) to a homogeneous Dirichlet BVP
 is sufficiently smooth, then
\begin{equation}
\| u - u_{\Omega_h}\|_{L^2(\Omega_h)} \le C h_b^2,\quad
\| \nabla (u - u_{\Omega_h})\|_{L^2(\Omega_h)} \le C h_b^{\frac{3}{2}},
\label{SB-1}
\end{equation}
where $C$ is a constant depending on $u_{\Omega}$ and $u_{\Omega_h}$ is
the solution to the same BVP but on $\Omega_h$. As a consequence, we have
\begin{equation}
\| \nabla (u- u^h)\|_{L^2(\Omega_h)} \le C_1 h_b^{\frac{3}{2}} + C_2 h,
\label{SB-2}
\end{equation}
where $u^h$ denote the linear finite element solution of the BVP defined on $\Omega_h$
and $h$ is the maximal element diameter of the mesh.

On the other hand, for eigenvalue problems in the form of (\ref{eigen-1})
Burenkov and Lamberti \cite{BuLa2008} show that
\[
\frac{| \lambda_j^{\Omega_h} - \lambda_j |}{\lambda_j} \le C h_b^2, \quad j = 1, 2, ...
\]
where $\lambda_j^{\Omega_h}$ and $\lambda_j$ denote the $j^{\text{th}}$ eigenvalues of (\ref{eigen-1})
defined on $\Omega_h$ and $\Omega$, respectively, and $C$ is a constant independent of $j$, $h_b$, and
eigenvalues. Recall that $\lambda_j^h$ is the linear finite element approximation of $\lambda_j^{\Omega_h}$
on polygonal domain $\Omega_h$. Using Theorem~\ref{thm-err} (with $d = 2$), we get 
\begin{equation}
\frac{|\lambda_j^h - \lambda_j|}{\lambda_j^h} \le C_1 h_b^2 + C_2 N^{-1},\quad j = 1, ..., k
\label{eigen-err-1}
\end{equation}
where $C_1$ and $C_2$ are constants independent of $j$, $h_b$, and $N$.
This estimate has an important implication in practical computation. Indeed, adaptive mesh computation
typically starts with a coarse initial mesh which is also used to define the geometry of the domain.
Although many mesh generation software can reconstruct the curved boundaries from the boundary
points of the initial mesh during the process of mesh refinement, there is no guarantee that the approximation
of the boundary is good enough such that the first term in (\ref{eigen-err-1}) is comparable to or smaller than
the second term. A way to avoid this problem is to use an initial mesh with a sufficient number
of boundary points such that $h_b = \mathcal{O}(N^{-\frac{1}{2}})$. 
On the other hand, from (\ref{LB-2}) we see that the condition for BVPs is
$h_b = \mathcal{O}(h^{\frac{2}{3}}) = \mathcal{O}(N^{-\frac{1}{3}})$.
Thus, the computation of eigenvalue problems requires more initial boundary mesh points than that for BVPs
in order to reduce the effects of the boundary approximation to the level of the error of the finite element approximation.

To verify (\ref{eigen-err-1}), we consider an example in the form of (\ref{eigen-1}), with $\bD = I$, $\rho = 1$,
and $\Omega$ being a circular sector with the central angle of $3 \pi /2$. The eigenvalues and eigenfunctions
are known (cf. \cite{WuZh2009}) to be
\[
\lambda_{j}=\alpha_{j}^{2},\quad
u_{j}=\frac{v_{j}}{\|v_{j}\|_{L^{2}(\Omega)}},
\quad v_{j}=J_{2m_{j}/3}(\alpha_{j}r)\sin(2m_{j}\theta/3),
\quad j = 1, 2, ...
\]
where $m_{j}$ is an integer depending on $j$ (with $m_j = 1,2,3,4,1,5,2,6, ...$)
and $\alpha_{j}$ is a zero of the Bessel function $J_{2m_{j}/3}$. 
The eigenfunction $v_{j}$ has the same singularity as $r^{2m_j/3}\sin(2m_j \theta/3)$.
We compute this problem using the anisotropic mesh adaptation strategy described in \S\ref{SEC:ama}.
The convergence behavior of the error in the eigenvalues and the resultant adaptive meshes are similar to
those for the L-shape example considered in \S\ref{exam:L-shape}. For this reason, we omit those results
here to save space. Instead, we present a result obtained with a mesh of about 75,000 elements
(so the second term in (\ref{eigen-err-1}) can be ignored when compared to the first term) but starting with
initial meshes of different numbers of boundary points (which are distributed almost evenly
along the arc of the domain). In Fig.~\ref{sperror} the relative error in the first four eigenvalues
is plotted as a function of the number of the boundary points in the initial mesh.
It is clearly shown that the error converges at a second order rate in terms of $h_b = \mathcal{O}(N_b^{-1})$,
which is consistent with (\ref{eigen-err-1}). The relative error for the first eigenvalue is also plotted
in Fig.~\ref{sperror30} as a function of the number of mesh elements $(N)$ for a fixed number of boundary
points in the initial mesh. One can see that the error decreases for small $N$
but stops decreasing for large $N$ when the boundary approximation error (the first term in (\ref{eigen-err-1}))
dominates the whole error.

\begin{figure}
\begin{minipage}{.5\linewidth}
\centering
\subfloat[]{\label{sperror:a}\includegraphics[scale=.9]{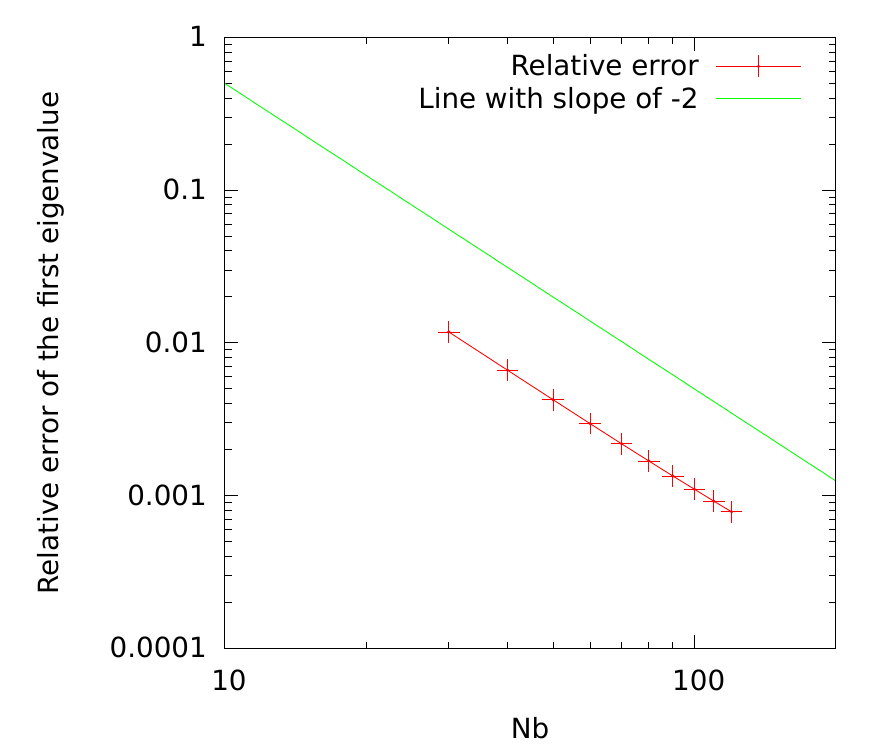}}
\end{minipage}%
\begin{minipage}{.5\linewidth}
\centering
\subfloat[]{\label{sperror:b}\includegraphics[scale=.9]{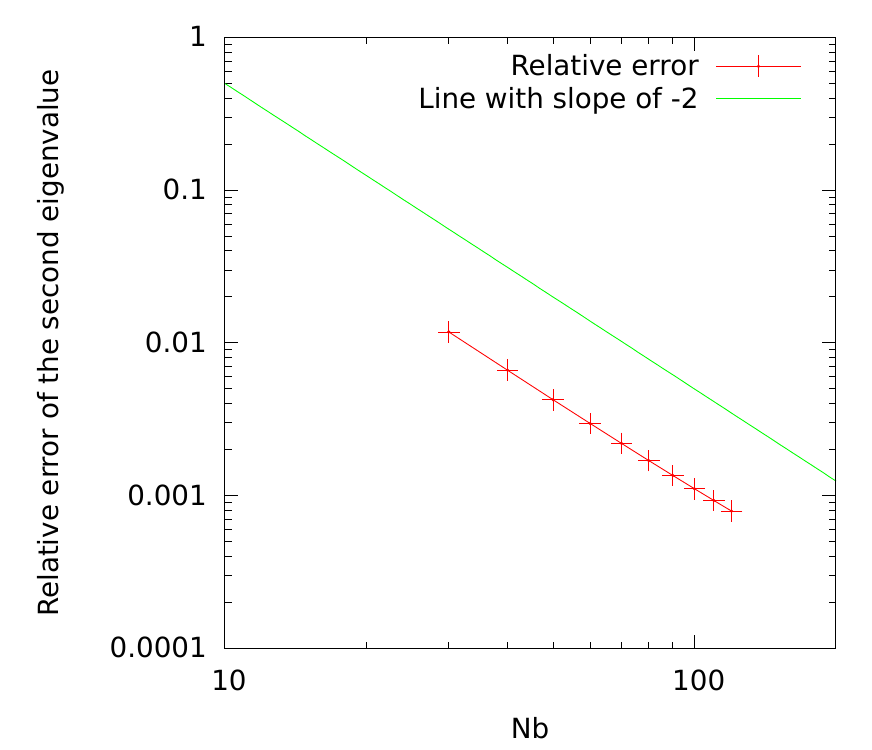}}
\end{minipage}\par\medskip
\begin{minipage}{.5\linewidth}
\centering
\subfloat[]{\label{sperror:c}\includegraphics[scale=.9]{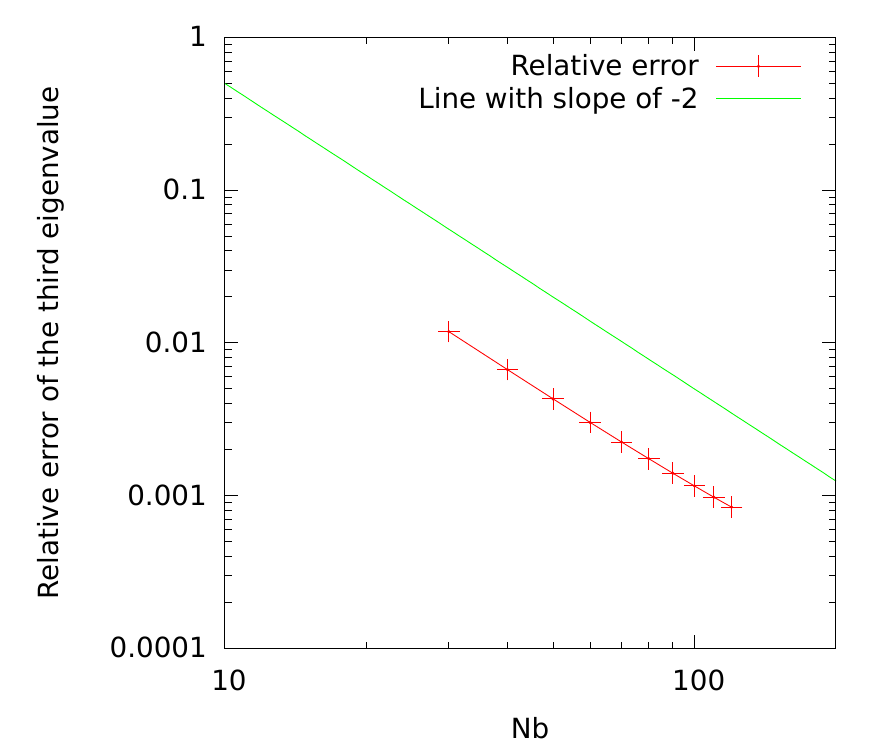}}
\end{minipage}
\begin{minipage}{.5\linewidth}
\centering
\subfloat[]{\label{sperror:d}\includegraphics[scale=.9]{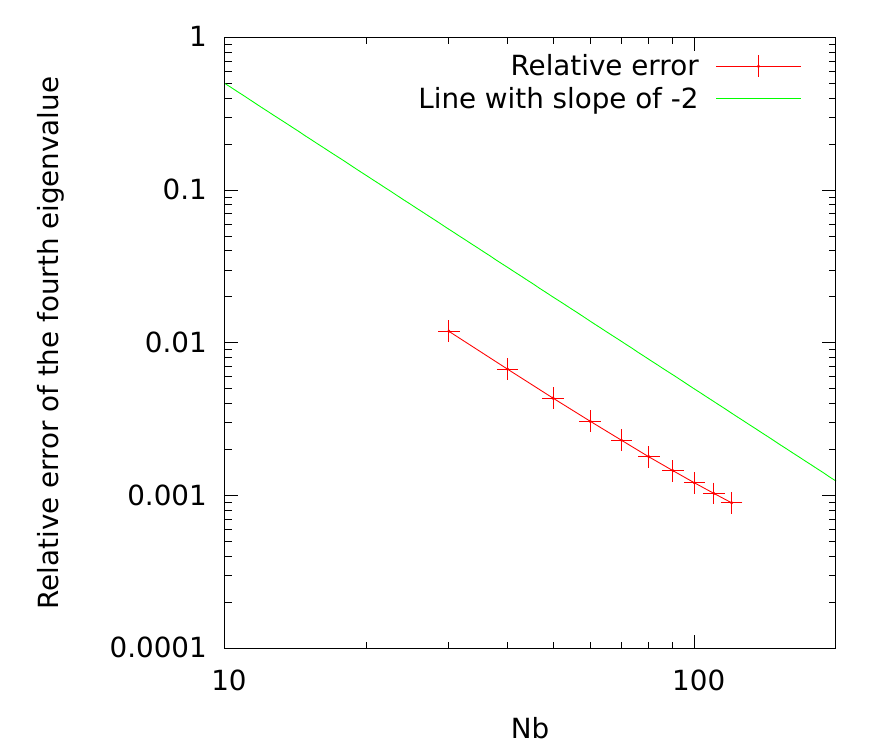}}
\end{minipage}
\caption{For the example in \S\ref{SEC:boundary}, the relative error in the first four eigenvalues
is plotted as a function of the number of the boundary points in the initial mesh.}
\label{sperror}
\end{figure}

\begin{figure}
\begin{minipage}{.5\linewidth}
\centering
\subfloat[]{\label{sperrorn30:a}\includegraphics[scale=.9]{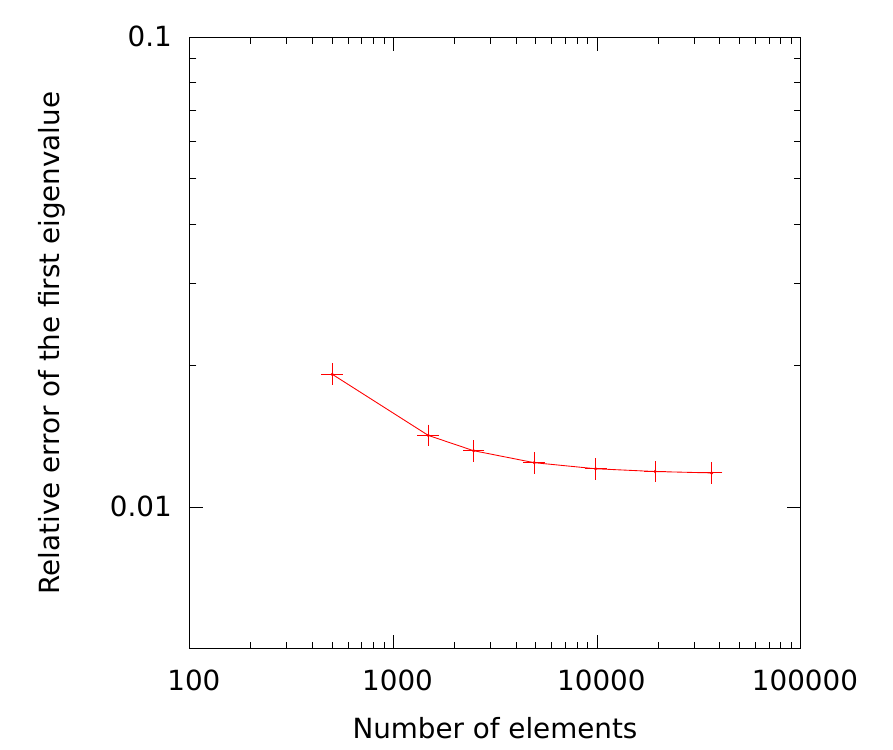}}
\end{minipage}%
\begin{minipage}{.5\linewidth}
\centering
\subfloat[]{\label{sperrorn30:b}\includegraphics[scale=.9]{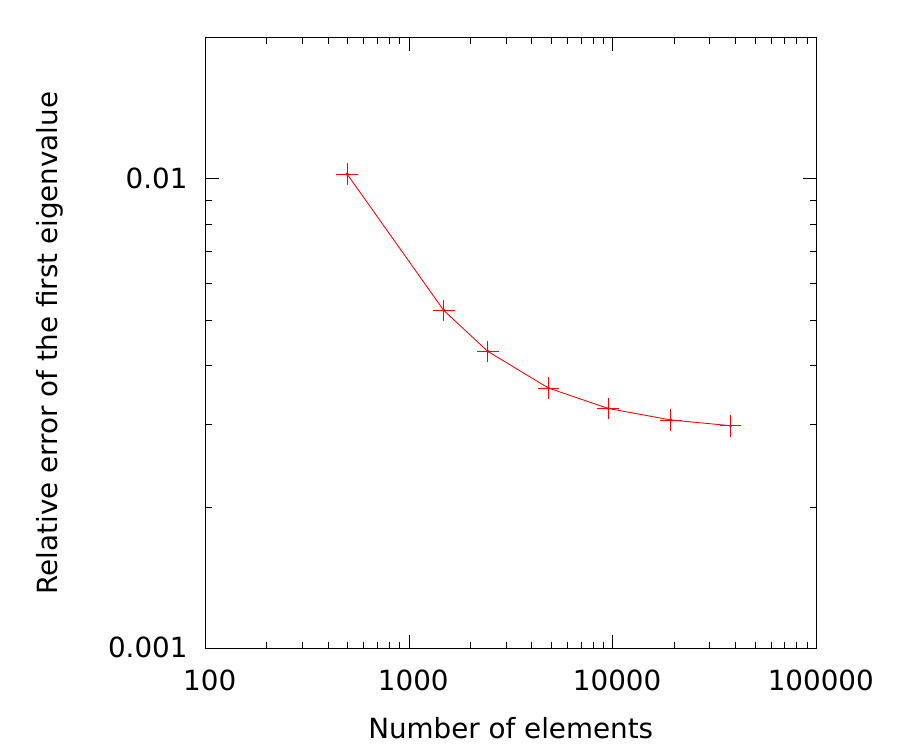}}
\end{minipage}
\caption{For the example in \S\ref{SEC:boundary}, the relative error in the first eigenvalue
is plotted as a function of the number of mesh elements for a fixed number of
boundary points in the initial mesh. (a) $N_b = 30$ and (b) $N_b = 60$.}
\label{sperror30}
\end{figure}

\section{Conclusions}
\label{SEC:conclusion}

In the previous sections we have studied an anisotropic mesh adaptation strategy
for the finite element computation of anisotropic eigenvalue problems in the form of (\ref{eigen-1}).
The study is based on the $\bM$-uniform mesh approach with which any nonuniform mesh
is characterized mathematically as a uniform one in the metric specified by a metric tensor defined
on the physical domain. Linear finite elements on simplicial meshes are considered.
Error bounds for the finite element approximations of the eigenvalues and eigenfunctions,
(\ref{error-3}) and (\ref{thm-err-1}) of Theorem~\ref{thm-err}, have been established
for quasi-$\bM$-uniform meshes associated with
the metric tensor (\ref{M-1}). It is worth mentioning that (\ref{M-1}) depends on the Hessian
of the eigenfunctions to be sought. In practice, this can be replaced by the approximate Hessian
obtained through Hessian recovery from function nodal values.
An iterative procedure for the adaptive mesh solution of eigenvalue problems is given in Fig.~\ref{f.1}.
Numerical results obtained for four two dimensional examples have demonstrated that the procedure
can produce properly adapted, anisotropic meshes which lead to more accurate computed eigenvalues
than uniform or isotropic adaptive meshes. They have also confirmed the second order convergence rate of the error
predicted in (\ref{thm-err-1}) as the mesh is being refined.

We have also studied the effects of boundary approximation on the computation of eigenvalue problems
with curved boundaries. It is shown that about $N^{1/2}$ boundary points in the initial
mesh, where $N$ is the number of the elements in the final adaptive mesh, are needed to keep
the effects of boundary approximation at the level of the error of the finite element approximation.
On the other hand, only about $N^{1/3}$ boundary points are needed in the initial mesh for boundary
value problems. This indicates that the computation of eigenvalue problems is more sensitive to boundary approximation
than that of boundary value problems.

\vspace{20pt}

\noindent
{\bf Acknowledgment.}
This work was supported in part by the NSF under Grant DMS-1115118.


\begin{thebibliography}{10}

\bibitem{ABBM98a}
I.~Aavatsmark, T.~Barkve, {\O}.~B{\o}e, and T.~Mannseth.
\newblock Discretization on unstructured grids for inhomogeneous, anisotropic
  media. {I}. {D}erivation of the methods.
\newblock {\em SIAM J. Sci. Comput.}, 19:1700--1716 (electronic), 1998.

\bibitem{ABBM98b}
I.~Aavatsmark, T.~Barkve, {\O}.~B{\o}e, and T.~Mannseth.
\newblock Discretization on unstructured grids for inhomogeneous, anisotropic
  media. {II}. {D}iscussion and numerical results.
\newblock {\em SIAM J. Sci. Comput.}, 19:1717--1736 (electronic), 1998.

\bibitem{ALM92}
L.~Alvarez, P.-L. Lions, and J.-M. Morel.
\newblock Image selective smoothing and edge detection by nonlinear diffusion.
  {II}.
\newblock {\em SIAM J. Numer. Anal.}, 29:845--866, 1992.

\bibitem{BO1991}
I.~Babu{\v{s}}ka and J.~Osborn.
\newblock Eigenvalue problems.
\newblock In {\em Handbook of numerical analysis, {V}ol.\ {II}}, Handb. Numer.
  Anal., II, pages 641--787. North-Holland, Amsterdam, 1991.

\bibitem{BO1989}
I.~Babu{\v{s}}ka and J.~E. Osborn.
\newblock Finite element-{G}alerkin approximation of the eigenvalues and
  eigenvectors of selfadjoint problems.
\newblock {\em Math. Comp.}, 52:275--297, 1989.

\bibitem{BdBSW1966}
G.~Birkhoff, C.~de~Boor, B.~Swartz, and B.~Wendroff.
\newblock Rayleigh-{R}itz approximation by piecewise cubic polynomials.
\newblock {\em SIAM J. Numer. Anal.}, 3:188--203, 1966.

\bibitem{Bof2010}
D.~Boffi.
\newblock Finite element approximation of eigenvalue problems.
\newblock {\em Acta Numer.}, 19:1--120, 2010.

\bibitem{Boffi2010b}
D.~Boffi, F.~Gardini, and L.~Gastaldi.
\newblock Some remarks on eigenvalue approximation by finite elements.
\newblock In J.~Blowey and M.~Jensen, editors, {\em Frontiers in Numerical
  Analysis Ð Durham 2010}, volume~85 of {\em Lecture Notes in Computational
  Science and Engineering}, pages 1 -- 77, Berlin, Heidelberg, 2010.
  Springer-Verlag.

\bibitem{BuLa2008}
V.~I. Burenkov and P.~D. Lamberti.
\newblock Spectral stability of {D}irichlet second order uniformly elliptic
  operators.
\newblock {\em J. Diff. Eq.}, 244:1712--1740, 2008.

\bibitem{CLMC92}
F.~Catt{\'e}, P.-L. Lions, J.-M. Morel, and T.~Coll.
\newblock Image selective smoothing and edge detection by nonlinear diffusion.
\newblock {\em SIAM J. Numer. Anal.}, 29:182--193, 1992.

\bibitem{DXZ2008}
X.~Dai, J.~Xu, and A.~Zhou.
\newblock Convergence and optimal complexity of adaptive finite element
  eigenvalue computations.
\newblock {\em Numer. Math.}, 110:313--355, 2008.

\bibitem{Dziuk-2005}
K.~Deckelnick, G.~Dziuk, and C.~M. Elliott.
\newblock Computation of geometric partial differential equations and mean
  curvature flow.
\newblock {\em Acta Numer.}, 14:139--232, 2005.

\bibitem{Dyakonov1996}
E.~G. D'yakonov.
\newblock {\em Optimization in solving elliptic problems}.
\newblock CRC Press, Boca Raton, FL, 1996.
\newblock Translated from the 1989 Russian original, Translation edited and
  with a preface by Steve McCormick.

\bibitem{Dziuk-2013}
G.~Dziuk and C.~M. Elliott.
\newblock Finite element methods for surface {PDE}s.
\newblock {\em Acta Numer.}, 22:289--396, 2013.

\bibitem{Feng-Prohl-03}
X.~Feng and A.~Prohl.
\newblock Analysis of total variation flow and its finite element
  approximations.
\newblock {\em M2AN Math. Model. Numer. Anal.}, 37:533--556, 2003.

\bibitem{Fix1973}
G.~J. Fix.
\newblock Eigenvalue approximation by the finite element method.
\newblock {\em Adv. in Math.}, 10:300--316, 1973.

\bibitem{GL09}
S.~G\H{u}nter and K.~Lackner.
\newblock A mixed implicit-explicit finite difference scheme for heat transport
  in magnetised plasmas.
\newblock {\em J. Comput. Phys.}, 228:282--293, 2009.

\bibitem{Hec97}
F.~Hecht.
\newblock {BAMG} -- {B}idimensional {A}nisotropic {M}esh {G}enerator homepage.
\newblock {http://www.ann.jussieu.fr/$\sim$hecht/ftp/bamg/}, 1997.

\bibitem{Hua05b}
W.~Huang.
\newblock Metric tensors for anisotropic mesh generation.
\newblock {\em J. Comput. Phys.}, 204:633--665, 2005.

\bibitem{Hua06}
W.~Huang.
\newblock Mathematical principles of anisotropic mesh adaptation.
\newblock {\em Comm. Comput. Phys.}, 1:276--310, 2006.

\bibitem{Huang2013}
W.~Huang.
\newblock Sign-preserving of principal eigenfunctions in {P1} finite element
  approximation of eigenvalue problems of second-order elliptic operators.
\newblock {\em (submitted)}, 2013.
\newblock (arXiv:1306.1987).

\bibitem{HuKaLa2013}
W.~Huang, L.~Kamenski, and J.~Lang.
\newblock Stability of explicit {R}unge-{K}utta methods for finite element
  approximation of linear parabolic equations on anisotropic meshes.
\newblock {\em (submitted)}, 2013.
\newblock (WIAS Preprint No. 1869).

\bibitem{HR11}
W.~Huang and R.~D. Russell.
\newblock {\em Adaptive Moving Mesh Methods}.
\newblock Springer, New York, 2011.
\newblock Applied Mathematical Sciences Series, Vol. 174.

\bibitem{Kam09}
L.~Kamenski.
\newblock {\em Anisotropic Mesh Adaptation Based on Hessian Recovery and A
  Posteriori Error Estimates}.
\newblock Verlag Dr. Hut, M\"unchen, 2009.
\newblock PhD Thesis, Technische Universit\"at Darmstadt.

\bibitem{KaHu2013}
L.~Kamenski and W.~Huang.
\newblock How a nonconvergent recovered {H}essian works in mesh adaptation.
\newblock {\em SIAM J. Numer. Anal.}, 2013 (submitted).
\newblock (arXiv:1211.2877).

\bibitem{KaHu2013b}
L.~Kamenski and W.~Huang.
\newblock A study on the conditioning of finite element equations with
  arbitrary anisotropic meshes via a density function approach.
\newblock {\em J. Math. Study}, (accepted).
\newblock (arXiv:1302.6868).

\bibitem{KaHuXu2012}
L.~Kamenski, W.~Huang, and H.~Xu.
\newblock Conditioning of finite element equations with arbitrary anisotropic
  meshes.
\newblock {\em Math. Comp.}, to appear.
\newblock (arXiv:1201.3651).

\bibitem{MM2011}
V.~Mehrmann and A.~Miedlar.
\newblock Adaptive computation of smallest eigenvalues of self-adjoint elliptic
  partial differential equations.
\newblock {\em Numer. Lin. Alg. Appl.}, 18:387--409, 2011.

\bibitem{MM84}
D.~Mihalas and B.~W. Mihalas.
\newblock {\em Foundations of Radiation Hydrodynamics}.
\newblock Oxford University Press, New York, Oxford, 1984.

\bibitem{MD06}
M.~J. Mlacnik and L.~J. Durlofsky.
\newblock Unstructured grid optimization for improved monotonicity of discrete
  solutions of elliptic equations with highly anisotropic coefficients.
\newblock {\em J. Comput. Phys.}, 216:337--361, 2006.

\bibitem{PS05}
I.~Parrish and J.~Stone.
\newblock Nonlinear evolution of the magnetothermal instability in two
  dimensions.
\newblock {\em Astrophys. J.}, 633:334--348, 2005.

\bibitem{PM90}
P.~Perona and J.~Malik.
\newblock Scale-space and edge detection using anisotropic diffusion.
\newblock {\em {IEEE} Trans. Pattern Anal. Mach. Intel.}, 12:629--639, 1990.

\bibitem{Raviart-Thomas-83}
P.-A. Raviart and J.-M. Thomas.
\newblock {\em Introduction \`a l'analyse num\'erique des \'equations aux
  d\'eriv\'ees partielles}.
\newblock Collection Math\'ematiques Appliqu\'ees pour la Ma\^\i trise.
  [Collection of Applied Mathematics for the Master's Degree]. Masson, Paris,
  1983.

\bibitem{Reuter-09}
M.~Reuter, S.~Biasotti, D.~Giorgi, G.~Patan{\`e}, and M.~Spagnuolo.
\newblock Discrete {L}aplace-{B}eltrami operators for shape analysis and
  segmentation.
\newblock {\em Computers {\&} Graphics}, 33:381--390, 2009.

\bibitem{Reuter-06}
M.~Reuter, F.-E. Wolter, and N.~Peinecke.
\newblock {L}aplace-{B}eltrami spectra as ``{S}hape-{DNA}'' of surfaces and
  solids.
\newblock {\em Computer-Aided Design}, 38:342--366, 2006.

\bibitem{SH07}
P.~Sharma and G.~W. Hammett.
\newblock Preserving monotonicity in anisotropic diffusion.
\newblock {\em J. Comput. Phys.}, 227:123--142, 2007.

\bibitem{Strang-Berger1973}
G.~Strang and A.~E. Berger.
\newblock The change in solution due to change in domain.
\newblock In {\em Partial differential equations ({P}roc. {S}ympos. {P}ure
  {M}ath., {V}ol. {XXIII}, {U}niv. {C}alifornia, {B}erkeley, {C}alif., 1971)},
  pages 199--205. Amer. Math. Soc., Providence, R.I., 1973.

\bibitem{Wei98}
J.~Weickert.
\newblock {\em Anisotropic Diffusion in Image Processing}.
\newblock Teubner-Verlag, Stuttgart, Germany, 1998.

\bibitem{WuZh2009}
H.~Wu and Z.~Zhang.
\newblock Enhancing eigenvalue approximation by gradient recovery on adaptive
  meshes.
\newblock {\em IMA J. Numer. Anal.}, 29:1008--1022, 2009.

\bibitem{XZ2001}
J.~Xu and A.~Zhou.
\newblock A two-grid discretization scheme for eigenvalue problems.
\newblock {\em Math. Comp.}, 70:17--25, 2001.

\end{thebibliography}

\end{document}